\definecolor{mylinkcolor}{rgb}{0.8,0,0}
\definecolor{myurlcolor}{rgb}{0,0,0.8}
\definecolor{mycitecolor}{rgb}{0,0,0.8}
\DeclareSymbolFont{cyrletters}{OT2}{wncyr}{m}{n}
\DeclareMathSymbol{\Sha}{\mathalpha}{cyrletters}{"58}
\newtheorem{defn}{Definition}[section]
\newtheorem{definition}[defn]{Definition}
\newtheorem{corollary}[defn]{Corollary}
\newtheorem{lemma}[defn]{Lemma}
\newtheorem{thm}[defn]{Theorem}
\newtheorem{theorem}[defn]{Theorem}
\newtheorem{proposition}[defn]{Proposition}
\theoremstyle{definition}
\newtheorem*{ack}{Acknowledgements}
\newtheorem{remark}[defn]{Remark}
\newtheorem{example}[defn]{Example}
\newcommand{\QQ}{\mathbb Q}
\newcommand{\ZZ}{\mathbb Z}
\newcommand{\Z}{\mathbb Z}
\newcommand{\Q}{\mathbb{Q}}
\newcommand{\arrow}{\longrightarrow}
\newcommand{\tor}{\mathrm{tors}}
\begin{document}
	
	
	
	\title[2-adic Galois images of CM isogeny-torsion graphs]{2-adic Galois images of isogeny-torsion graphs over $\QQ$ with CM}
	
\author{Garen Chiloyan}
\email{garen.chiloyan@gmail.com} 
\urladdr{https://sites.google.com/view/garenmath/home}



\subjclass{Primary: 11F80, Secondary: 11G05, 11G15, 14H52.}

\maketitle



\begin{abstract}

Let $\mathcal{E}$ be a $\QQ$-isogeny class of elliptic curves defined over $\QQ$. The isogeny graph associated to $\mathcal{E}$ is a graph which has a vertex for each elliptic curve over $\QQ$ of $\mathcal{E}$ and an edge for each $\QQ$-isogeny of prime degree that maps one elliptic curve in $\mathcal{E}$ to another elliptic curve in $\mathcal{E}$, with the degree of the isogeny recorded as a label of the edge. The isogeny-torsion graph associated to $\mathcal{E}$ is the isogeny graph associated to $\mathcal{E}$ where, in addition, we label each vertex with the abstract group structure of the torsion subgroup over $\QQ$ of the corresponding elliptic curve. The main result of the article is a classification of the $2$-adic Galois image at each vertex of the isogeny-torsion graphs whose associated $\QQ$-isogeny class consists of elliptic curves over $\QQ$ with complex multiplication.

\end{abstract}

\section{Introduction}

Let $E/\QQ$ be an elliptic curve. It is well known that $E$ has the structure of an abelian group with group identity which we will denote $\mathcal{O}$. By the Mordell--Weil theorem, the set of points on $E$ defined over $\QQ$, denoted $E(\QQ)$ has the structure of a finitely generated abelian group. Thus, the set of points on $E$ defined over $\QQ$ of finite order, denoted $E(\QQ)_{\text{tors}}$ is a finite, abelian group. By Mazur's theorem, $E(\QQ)_{\texttt{tors}}$ is isomorphic to one of fifteen groups (see Theorem \ref{thm-mazur}). Moreover, these fifteen groups occur infinitely often. Let $E'/\QQ$ be an elliptic curve. An isogeny mapping $E$ to $E'$ is a rational morphism $\phi \colon E \to E'$ such that $\phi$ maps the identity of $E$ to the identity of $E'$. If there is a non-constant isogeny defined over $\QQ$, mapping $E$ to $E'$, we say that $E$ is $\QQ$-isogenous to $E'$. This relation is an equivalence relation and the set of elliptic curves defined over $\QQ$ that are $\QQ$-isogenous to $E$ is called the $\QQ$-isogeny class of $E$.

An isogeny is a group homomorphism and the kernel of a non-constant isogeny is finite. We are particularly interested in non-constant isogenies with cyclic kernels. The isogeny graph associated to the $\QQ$-isogeny class of $E$ is a visual description of the $\QQ$-isogeny class of $E$. Denote the $\QQ$-isogeny class of $E$ by $\mathcal{E}$. The isogeny graph associated to $\mathcal{E}$ is a graph which has a vertex for each elliptic curve in $\mathcal{E}$ and an edge for each $\QQ$-isogeny of prime degree that maps one elliptic curve in $\mathcal{E}$ to another elliptic curve in $\mathcal{E}$, with the degree recorded as a label of the edge. The isogeny-torsion graph associated to $\mathcal{E}$ is the isogeny graph associated to $\mathcal{E}$ where, in addition, we label each vertex with the abstract group structure of the torsion subgroup over $\Q$ of the corresponding elliptic curve.

\begin{example}\label{T4 example}

There are four elliptic curves in the $\QQ$-isogeny class with LMFDB label \texttt{27.a} which we will denote $E_{1}$, $E_{2}$, $E_{3}$, and $E_{4}$. The isogeny graph associated to \texttt{27.a} is above and the isogeny-torsion graph associated to \texttt{27.a} is below.

\begin{center} \begin{tikzcd}
E_{1} \arrow[r, "3", no head] & E_{2} \arrow[r, "3", no head] & E_{3} \arrow[r, "3", no head] & E_{4}
\end{tikzcd} \end{center}

\begin{center} \begin{tikzcd}
\mathbb{Z} / 3 \mathbb{Z} \arrow[r, no head, "3"] & \mathbb{Z} / 3 \mathbb{Z} \arrow[r, no head, "3"] & \mathbb{Z} / 3 \mathbb{Z} \arrow[r, no head, "3"] & \mathcal{O}
\end{tikzcd}
\end{center}
\end{example}

A proof classifying the isogeny graphs associated to $\QQ$-isogeny classes of elliptic curves over $\QQ$ appears in Section 6 of \cite{gcal-r}.

\begin{thm}
	There are $26$ isomorphism types of isogeny graphs that are associated to $\QQ$-isogeny classes of elliptic curves defined over $\QQ$. More precisely, there are $16$ types of (linear) $L_k$ graphs of $k = 1$-$4$ vertices, $3$ types of (nonlinear two-primary torsion) $T_k$ graphs of $k = 4$, $6$, or $8$ vertices, $6$ types of (rectangular) $R_k$ graphs of $k = 4$ or $6$ vertices, and $1$ (special) $S$ graph (see Tables 1 - 4 in \cite{gcal-r}).
\end{thm}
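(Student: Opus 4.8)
The plan is to derive the classification from two deep external inputs — Mazur's theorem on $\QQ$-rational isogenies of prime degree, and Kenku's theorem on the $\QQ$-rational points of the modular curves $X_0(N)$ — and then run a finite combinatorial analysis on top of them. First I would record what the two inputs give. By Mazur's isogeny theorem, the only primes that can occur as an edge label of an isogeny graph over $\QQ$ are $\ell\in\{2,3,5,7,11,13,17,19,37,43,67,163\}$. By Kenku's theorem, the set of $N$ for which $X_0(N)$ has a non-cuspidal $\QQ$-rational point — equivalently, for which there exist two elliptic curves over $\QQ$ joined by a cyclic $\QQ$-isogeny of degree $N$ — is exactly the $26$-element list $\{1,2,\dots,19\}\cup\{21,25,27,37,43,67,163\}$, and a $\QQ$-isogeny class has at most $8$ curves. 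Writing $N(\E)$ for the \emph{degree} of a class $\E$, i.e.\ the largest degree of a cyclic $\QQ$-isogeny between two of its curves, it follows that $N(\E)$ takes precisely these $26$ values. The theorem would then follow from two further claims: (i) $N(\E)$ determines the isomorphism type of the isogeny graph of $\E$, and (ii) the $26$ graph types so obtained split as $16+3+6+1$.

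For (i), I would factor $N(\E)=\prod_i\ell_i^{a_i}$ and treat each prime direction separately. Around a vertex $E$, the $\Gal(\Qbar/\QQ)$-stable cyclic subgroups of $\ell_i$-power order are nested, so the $\ell_i$-direction of the graph at $E$ is generically a path of $\ell_i$-isogenies of length $a_i$; distinct prime directions commute and combine into a Cartesian product of such paths, already producing the linear graphs $L_2$--$L_4$ when $N(\E)$ is a prime power and the rectangular grids $R_4,R_6$ when $N(\E)$ has two prime factors. The one delicate point is \emph{branching}: a vertex carrying two independent $\QQ$-rational subgroups of the same prime order $\ell$ makes the graph deviate from a product of paths. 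The Galois action on the $\ell+1$ cyclic $\ell$-subgroups is through a subgroup of $\PGL_2(\FF_\ell)$ on $\PP^1(\FF_\ell)$; for $\ell$ odd, a subgroup fixing two points of $\PP^1(\FF_\ell)$ need not fix a third, so a chain of $\ell$-isogenies really is just a chain, with no extra branching; but for $\ell=2$ the group is $\PGL_2(\FF_2)\cong S_3$ on three points, and a subgroup fixing two of three points fixes all three — so the middle curve of a cyclic $4$-isogeny between distinct curves automatically has full $\QQ$-rational $2$-torsion and hence branches. This is exactly where the $2$-adic Galois image enters, and pinning down how far the $2$-primary branching propagates for $N(\E)\in\{4,8,16\}$ is the step I expect to be the main obstacle.

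Granting the branching analysis, claim (ii) is bookkeeping over the $26$ values of $N(\E)$. Degree $1$ gives $L_1$; each of the $12$ primes gives $L_2$ — and for the eight large primes $11,13,17,19,37,43,67,163$ the point is just that no proper multiple of $\ell$ lies in Kenku's list, so no curve of the class can carry any further rational prime isogeny; the odd prime powers $9$ and $25$ give two graphs $L_3$, and $27$ gives $L_4$, for $16$ linear graphs in all. The pure $2$-power degrees $4,8,16$ force successively more $2$-torsion branching and give exactly $T_4,T_6,T_8$. Of the remaining composite entries of Kenku's list, $6,10,14,15,21$ give the five rectangular graphs $R_4$ and $18$ gives $R_6$, for six rectangular graphs; and $12$ is the exceptional case — its $2$-part is $2^2$, so the chain of $2$-isogenies has a full-$2$-torsion middle vertex and the graph, rather than a $2\times 3$ grid, is the Cartesian product of a three-pronged star with an edge, namely the unique special graph $S$. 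This totals $16+3+6+1=26$ types, and I would finish by exhibiting a concrete $\QQ$-isogeny class realizing each type (for instance from the LMFDB) so that all $26$ actually occur. Everything hinges on the $2$-adic step; given Mazur's and Kenku's theorems, the rest is a finite and essentially mechanical verification.
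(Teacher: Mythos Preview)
The paper does not prove this theorem; it is quoted as a known result, with the remark just before it that ``a proof \ldots\ appears in Section~6 of \cite{gcal-r}''. So there is no in-paper argument to compare against. Your outline nonetheless matches the natural strategy (and, judging from the paper's surrounding statements of Theorems~\ref{thm-ratnoncusps} and~\ref{thm-kenku}, the strategy of \cite{gcal-r}): use Mazur and Kenku to pin down the $26$ possible isogeny-class degrees, argue that the degree determines the graph, and exhibit examples. Your final-paragraph bookkeeping is correct and gives the stated $16+3+6+1$ split.

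One point needs tightening. For odd $\ell$ you write that ``a subgroup of $\PGL_2(\FF_\ell)$ fixing two points of $\PP^1(\FF_\ell)$ need not fix a third, so a chain of $\ell$-isogenies really is just a chain''. That only shows branching is not \emph{forced}; it does not rule it out. What you actually need is that no $E/\QQ$ has three $\QQ$-rational subgroups of order $\ell$ for odd $\ell$. For $\ell\ge 7$ this is immediate from Kenku's bound $C_\ell(E)\le 2$. For $\ell\in\{3,5\}$, note that $\PGL_2(\FF_\ell)$ acts sharply $3$-transitively on $\PP^1(\FF_\ell)$, so a Galois image fixing three of the $\ell+1$ lines in $E[\ell]$ fixes them all; that would give $C_\ell(E)\ge \ell+2$, contradicting Kenku's $C_3\le 4$ and $C_5\le 3$. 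With that correction, and granting the $2$-primary branching analysis you rightly flag as the main obstacle, your sketch is sound.
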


The isogeny class degree of $\mathcal{E}$ is the least common multiple of the degrees of all cyclic, $\QQ$-rational isogenies mapping elliptic curves over $\QQ$ in $\mathcal{E}$ to elliptic curves over $\QQ$ in $\mathcal{E}$. In other words, the isogeny class degree of $\mathcal{E}$ is equal to the greatest degree of a cyclic, $\QQ$-rational isogeny that maps an elliptic curve in $\mathcal{E}$ to an elliptic curve in $\mathcal{E}$. For example, if $\mathcal{E}$ is of $L_{4}$ type, the isogeny class of $\mathcal{E}$ is equal to $27$.

In the case of an isogeny graph of $L_{2}$, $L_{3}$, or $R_{4}$ type, the isogeny class degree of the $\QQ$-isogeny class is written in parentheses to distinguish it from other isogeny-torsion graphs of the same size and shape, but with different isogeny class degree. For example, there are $L_{2}(2)$ graphs; graphs of $L_{2}$ type generated by an isogeny of degree $2$ and there are $L_{2}(3)$ graphs; isogeny graphs of $L_{2}$ type generated by an isogeny of degree $3$. Relying only on the size and shape of isogeny graphs of $L_{2}$ type is not enough to distinguish isogeny graphs of $L_{2}(2)$ type from isogeny graphs of $L_{2}(3)$ type. On the other hand, the isogeny-torsion graph of $L_{4}$ type is the only linear isogeny-torsion graph with four vertices and it is not necessary to designate the isogeny class degree in parentheses to distinguish it from other isogeny-torsion graphs. The main theorem in \cite{gcal-r} was the classification of isogeny-torsion graphs associated to $\QQ$-isogeny classes of elliptic curves over $\QQ$.

\begin{thm}[Chiloyan, Lozano-Robledo, \cite{gcal-r}]
There are $52$ isomorphism types of isogeny-torsion graphs that are associated to $\Q$-isogeny classes of elliptic curves defined over $\Q$. In particular, there are $23$ isogeny-torsion graphs of $L_k$ type, $13$ isogeny-torsion graphs of $T_k$ type, $12$ isogeny-torsion graphs of $R_k$ type, and $4$ isogeny-torsion graphs of $S$ type.
\end{thm}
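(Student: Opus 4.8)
The plan is to build on the classification of the $26$ isogeny graphs recorded above and, one graph type at a time, determine exactly which assignments of Mazur-torsion groups (Theorem~\ref{thm-mazur}) to the vertices are realized by a genuine $\QQ$-isogeny class. Thus the argument is a finite case analysis organized by the shape of the isogeny graph, with two tasks per shape: exhibiting curves that realize a given torsion configuration, and proving that the remaining a priori configurations cannot occur.

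First I would isolate the constraints relating torsion to the graph. \emph{Local constraints}: if the curve $E$ at a vertex has $E(\QQ)_{\tor}$ containing a subgroup of prime order $\ell$ (resp.\ a cyclic subgroup of order $\ell^{2}$, resp.\ a copy of $\ZZ/\ell\ZZ\times\ZZ/\ell\ZZ$), then $E$ admits a $\QQ$-rational $\ell$-isogeny (resp.\ an $\ell^{2}$-isogeny, resp.\ three distinct $\ell$-isogenies), so the vertex must carry the corresponding incident edges; this already rules out nontrivial torsion at an isolated vertex, and rules out full rational $2$-torsion at a vertex of any of the linear graphs $L_k$. \emph{Isogeny compatibility}: for a $\QQ$-isogeny $\phi\colon E\to E'$ of prime degree $\ell$, both $\phi$ and its dual isogeny carry $E(\QQ)_{\tor}$ into $E'(\QQ)_{\tor}$ with kernel of order dividing $\ell$, while $\phi$ restricts to a Galois-equivariant isomorphism $E[m]\cong E'[m]$ for every $m$ coprime to $\ell$; hence the ratio $|E'(\QQ)_{\tor}|/|E(\QQ)_{\tor}|$ equals $1$, $\ell$, or $\ell^{-1}$, and the prime-to-$\ell$ part of the torsion is \emph{unchanged} along that edge. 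Propagating this around the graph shows that the primes dividing torsion orders anywhere in the class lie among the primes dividing the isogeny class degree, together with at most one extra ``torsion prime''; combined with Mazur's list, this cuts the candidates for each graph down to a short finite list.

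I would then run through the families. For the linear graphs $L_1,\dots,L_4$ (including the sub-cases $L_2(2)$, $L_2(3)$, $L_3(\,\cdot\,)$ distinguished by isogeny class degree), the isogeny class degree is $1$, a prime, or a prime power, so a single prime $\ell$ controls the torsion and the degree-$\ell$ edges force a monotone pattern of torsion orders along the path; enumerating these patterns and checking realizability (e.g.\ against the LMFDB) yields the $L_k$ count. For the $T_k$ graphs, which by the preceding theorem are precisely the graphs carrying full rational $2$-torsion, I would analyze the mod-$2$ and mod-$4$ Galois images forced by the presence of $\ZZ/2\ZZ\times\ZZ/2\ZZ$ together with the prescribed $2$-power isogenies; this pins down the admissible torsion at each vertex and shows that $\ZZ/2\ZZ\times\ZZ/4\ZZ$ and $\ZZ/2\ZZ\times\ZZ/8\ZZ$ can occur only at the ``outer'' vertices. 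For the rectangular graphs $R_k$ (isogeny class degrees $2\ell$ with $\ell\in\{3,5,7\}$, along with the $R_6$ shapes) and for the special graph $S$, two primes interact: the mod-$2$ and mod-$\ell$ representations are then simultaneously reducible, and I would bound the product of the available rational torsion using Mazur's theorem together with Kenku's bound on the size of a $\QQ$-isogeny class, excluding for instance a curve that simultaneously has a rational point of order $\ell$ and too much rational $2$-torsion. In each family, the configurations that do occur are certified by explicit curves.

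The step I expect to be the main obstacle is the non-realizability direction for the mixed-prime graphs ($R_k$, $S$) and for the borderline $T_k$ cases: one must exclude torsion configurations that are consistent with Mazur's theorem \emph{and} with all of the local edge constraints, yet still fail to be simultaneously realizable on a single isogeny class. This genuinely needs the Galois representations — analyzing the entanglement between the mod-$2$ and mod-$\ell$ (or mod-$2$ and mod-$4$) representations, and, where necessary, invoking the determination of rational points on the relevant modular curves (fiber products of $X_0(N)$'s and $X_1(N)$'s) — rather than a bare counting argument. Once the surviving configurations are assembled over all $26$ graph types, the totals $23+13+12+4=52$ follow.
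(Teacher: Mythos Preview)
The present paper does not contain a proof of this theorem: it is stated here only as a citation of the main result of \cite{gcal-r}, and the remainder of the paper takes it as input. So there is no proof in this paper against which to compare your attempt.

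That said, your sketch is a faithful outline of the strategy actually carried out in \cite{gcal-r}: start from the $26$ isogeny-graph shapes, impose the edge-by-edge constraints coming from Mazur's theorem and from how torsion behaves under an isogeny of prime degree (your ``local constraints'' and ``isogeny compatibility''), reduce to a finite list of candidate torsion configurations per shape, exhibit explicit curves for those that occur, and rule out the rest by finer Galois-representation arguments and modular-curve input. Your identification of the hard step --- excluding configurations on the mixed-prime $R_k$, $S$, and borderline $T_k$ graphs that pass all local tests but are globally unrealizable --- is accurate; in \cite{gcal-r} this is where most of the case-by-case work lives. As a sketch nothing here is wrong, but be aware that the actual exclusion arguments are substantially more granular than your summary suggests, and the final counts $23$, $13$, $12$, $4$ only emerge after that full enumeration.
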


We denote the cyclic group of order $a$ as $[a]$ and for $b = 1 - 4$, we denote the group $\ZZ / 2 \ZZ \times \ZZ / 2 \cdot b \ZZ$ as $[2,b]$. We organize torsion configuration of isogeny-torsion graphs in ``vector-group'' formation corresponding to the enumeration of the elliptic curves in the isogeny-torsion graph. For example, reconsider the $\QQ$-isogeny class with LMFDB label \texttt{27.a} and its associated isogeny graph and isogeny-torsion graph, $\mathcal{G}$.

\begin{center} \begin{tikzcd}
E_{1} \arrow[r, "3", no head] & E_{2} \arrow[r, "3", no head] & E_{3} \arrow[r, "3", no head] & E_{4}
\end{tikzcd} \end{center}

\begin{center} \begin{tikzcd}
\mathbb{Z} / 3 \mathbb{Z} \arrow[r, no head, "3"] & \mathbb{Z} / 3 \mathbb{Z} \arrow[r, no head, "3"] & \mathbb{Z} / 3 \mathbb{Z} \arrow[r, no head, "3"] & \mathcal{O}
\end{tikzcd}
\end{center}
Then we will denote the torsion configuration of $\mathcal{G}$ as $([3],[3],[3],[1])$. For another case, consider the $\QQ$-isogeny class $\mathcal{E}$ with LMFDB label \texttt{17.a}. Then the isogeny graph of $\mathcal{E}$ is below on the left and the isogeny-torsion graph of $\mathcal{E}$ is below on the right
\begin{center}
    \begin{tikzcd}
      & E_{2}                                                 &       \\
      & E_{1} \arrow[u, no head, "2"] \arrow[ld, no head, "2"'] \arrow[rd, no head, "2"] &       \\
E_{3} &                                                       & E_{4}
\end{tikzcd}, \begin{tikzcd}
                          & \mathbb{Z} / 4 \mathbb{Z}                                                                                  &                           \\
                          & \mathbb{Z} / 2 \mathbb{Z} \times \mathbb{Z} / 2 \mathbb{Z} \arrow[u, no head, "2"] \arrow[ld, no head, "2"'] \arrow[rd, no head, "2"] &                           \\
\mathbb{Z} / 4 \mathbb{Z} &                                                                                                            & \mathbb{Z} / 2 \mathbb{Z}
\end{tikzcd}
\end{center}
We denote the torsion classification of $\mathcal{G}$ to be $([2,2],[4],[4],[2])$.

Let $E/\QQ$ be an elliptic curve with complex multiplication (CM) and denote the $\QQ$-isogeny class of $E$ by $\mathcal{E}$ and the isogeny-torsion graph associated to $\mathcal{E}$ by $\mathcal{G}$. Then all elliptic curves over $\QQ$ in $\mathcal{E}$ have CM. Thus, it is natural to say that $\mathcal{E}$ and $\mathcal{G}$ are defined over $\QQ$ and have CM. Moreover, it is natural to say that the classification of the $2$-adic Galois image of each of the vertices of $\mathcal{G}$ would classify the $2$-adic Galois image of $\mathcal{G}$. The isogeny-torsion graph, $\mathcal{G}$ is of $L_{2}(p)$, $L_{4}$, $T_{4}$, $R_{4}(6)$, or $R_{4}(14)$ type, where $p \in \{2, 3, 11, 19, 43, 67, 163\}$. Table \ref{tab-CMgraphs} classifies the isogeny-torsion graphs with CM. This same table appears as Table 5 in Section 4 of \cite{gcal-r}.
\newpage

\begin{table}[h!]
 	\renewcommand{\arraystretch}{1.2}
 	\begin{tabular}{|c|c|c|c|c|c|}
 		\hline
 		$d_K$ & \multicolumn{2}{c|}{$j$} & Type & Torsion config. & LMFDB\\
 		\hline
 		\hline
 		\multirow{10}{*}{$-3$}  & \multirow{6}{*}{$0$} & $y^2=x^3+t^3, t=-3,1$  & $R_4(6)$ & $([6],[6],[2],[2])$ & \texttt{36.a4}\\
 		& &$y^2=x^3+t^3, t\neq -3,1$  & $R_4(6)$ & $([2],[2],[2],[2])$ & \texttt{144.a3}\\
 		& & $y^2=x^3+16t^3, t=-3,1$ & $L_4$ & $([3],[3],[3],[1])$ & \texttt{27.a3}\\
 		& & $y^2=x^3+16t^3, t\neq -3,1$  & $L_4$ & $([1],[1],[1],[1])$ & \texttt{432.e3}\\
 		& & $\!y^2=x^3+s^2,\,s^2\neq t^3,16t^3\!\!$  & $L_2(3)$ & $([3],[1])$ & \texttt{108.a2}\\
 		& & $\!y^2=x^3+s,\,s\neq t^3,16t^3\!\!$  & $L_2(3)$ & $([1],[1])$ & \texttt{225.c1}\\
 		\cline{2-6}
 		& \multirow{2}{*}{$54000$} &  $y^2=x^3-15t^2x + 22t^3, t=1,3$ & $R_4(6)$ & $([6],[6],[2],[2])$ & \texttt{36.a1}\\
 		&  & $y^2=x^3-15t^2x + 22t^3, t\neq 1,3$ & $R_4(6)$ & $([2],[2],[2],[2])$ & \texttt{144.a1}\\
 		\cline{2-6}
 		& \multirow{2}{*}{$-12288000$} & $E^t, t=-3,1$ & $L_4$ & $([3],[3],[3],[1])$ & \texttt{27.a2}\\
 		& & $E^t, t\neq -3,1$ & $L_4$ & $([1],[1],[1],[1])$ & \texttt{432.e1}\\
 		\hline
 		\multirow{7}{*}{$-4$} & \multirow{4}{*}{$1728$} & $y^2=x^3+tx, t=-1,4$  & $T_4$ & $([2,2],[4],[4],[2])$ & \texttt{32.a3}\\
 		& & $y^2=x^3+tx, t=-4,1$  & $T_4$ & $([2,2],[4],[2],[2])$ & \texttt{64.a3}\\
 		& & $y^2=x^3\pm t^2x, t\neq 1,2$  & $T_4$ & $([2,2],[2],[2],[2])$ & \texttt{288.d3}\\
 		& &  $y^2=x^3+sx$, $s\neq \pm t^2$  & $L_2(2)$ & $([2],[2])$ & \texttt{256.b1}\\
 		\cline{2-6}
 		& \multirow{3}{*}{$287496$} & $y^2=x^3-11t^2x+14t^3, t=\pm 1$ & \multirow{3}{*}{$T_4$} & $([2,2],[4],[4],[2])$ & \texttt{32.a2}\\
 		& & $y^2=x^3-11t^2x+14t^3, t=\pm 2$ &  & $([2,2],[4],[2],[2])$ & \texttt{64.a1}\\
 		& & $y^2=x^3-11t^2x+14t^3, t\neq \pm 1, \pm 2$ &  & $([2,2],[2],[2],[2])$ & \texttt{288.d1}\\
 		\hline
 		\multirow{2}{*}{$-7$} & \multicolumn{2}{c|}{$-3375$}  & \multirow{2}*{$R_4(14)$} & $([2],[2],[2],[2])$ & \texttt{49.a2}\\
 		& \multicolumn{2}{c|}{$16581375$} & & $([2],[2],[2],[2])$  & \texttt{49.a1}\\
 		\hline
 		$-8$ & \multicolumn{2}{c|}{$8000$}  & $L_2(2)$ & $([2],[2])$ & \texttt{256.a1}\\
 		\hline
 		$-11$ & \multicolumn{2}{c|}{$-32768$} & $L_2(11)$ & $([1],[1])$ & \texttt{121.b1}\\
 		\hline
 		$-19$ & \multicolumn{2}{c|}{$-884736$} & $L_2(19)$ & $([1],[1])$ & \texttt{361.a1} \\
 		\hline
 		$-43$ & \multicolumn{2}{c|}{$-884736000$}  & $L_2(43)$ & $([1],[1])$ & \texttt{1849.b1}\\
 		\hline
 		$-67$ & \multicolumn{2}{c|}{$-147197952000$}  & $L_2(67)$ & $([1],[1])$ & \texttt{4489.b1} \\
 		\hline
 		$-163$ & \multicolumn{2}{c|}{$-262537412640768000$} & $L_2(163)$ & $([1],[1])$ & \texttt{26569.a1} \\
 		\hline
 	\end{tabular}
 	\caption{The list of rational $j$-invariants with CM and the possible isogeny-torsion graphs that occur, where $E^t$ denotes the curve $y^2=x^3-38880t^2x+2950992t^3$.}
 	\label{tab-CMgraphs}
 \end{table}

\newpage

\begin{example}
Let $E/\QQ$ be an elliptic curve such that the isogeny graph associated to the $\QQ$-isogeny class of $E$ is of $L_{4}$ type. Then $E$ is represented by one of the elliptic curves $E_{1}$, $E_{2}$, $E_{3}$, or $E_{4}$ in the isogeny graph below.
\begin{center} \begin{tikzcd}
E_{1} \arrow[r, "3", no head] & E_{2} \arrow[r, "3", no head] & E_{3} \arrow[r, "3", no head] & E_{4}
\end{tikzcd} \end{center}
No matter the torsion configuration of the isogeny-torsion graph, $\rho_{E,2^{\infty}}(G_{\QQ})$ is conjugate to the group
$$\left\langle \operatorname{-Id}, \begin{bmatrix} 0 & 1 \\ 1 & 0 \end{bmatrix}, \begin{bmatrix} 7 & 4 \\ -4 & 3 \end{bmatrix}, \begin{bmatrix} 3 & 6 \\ -6 & -3 \end{bmatrix} \right\rangle \subseteq \operatorname{GL}(2, \ZZ_{2}).$$
\end{example}
The reason why the $2$-adic Galois image of all elliptic curves in an $L_{4}$ graph are conjugate comes from the fact that $3$ is odd (see Corollary \ref{coprime isogeny-degree}). The situation will not always be as seamless as this. When there are non-trivial, cyclic isogenies of $2$-power degree in the isogeny graph, it is likely the $2$-adic Galois image of the vertices are different.

Let $\delta$, $\phi$, and $N$ be integers such that $N \geq 0$. Denote the subgroup of $\operatorname{GL}\left(2, \ZZ / 2^{N} \ZZ\right)$ of matrices of the form $\begin{bmatrix}
a + b \cdot \phi & b \\ \delta \cdot b & a
\end{bmatrix}$ by $\mathcal{C}_{\delta, \phi}\left(2^{N}\right)$ and let $\mathcal{N}_{\delta, \phi}\left(2^{N}\right) = \left\langle \mathcal{C}_{\delta, \phi}(2^{N}), \begin{bmatrix} -1 & 0 \\ \phi & 1 \end{bmatrix} \right\rangle$. Finally, let $\mathcal{N}_{\delta,\phi}\left(2^{\infty}\right) = \varprojlim N_{\delta,\phi}(N)$.

\begin{example}
Let $E/\QQ$ be an elliptic curve such that the isogeny graph associated to the $\QQ$-isogeny class of $E$ is of $R_{4}(14)$ type (see below).
\begin{center}
\begin{tikzcd}
{E_{1}} \arrow[dd, no head, "7"'] \arrow[rr, no head, "2"] &  & {E_{2}} \arrow[dd, no head, "7"] \\
                                                                     &  &                                                        \\
{E_{3}} \arrow[rr, no head, "2"']                 &  & {E_{4}}               
\end{tikzcd}
\end{center}
If $E$ is represented by $E_{1}$ or $E_{3}$, then $\rho_{E,2^{\infty}}(G_{\QQ})$ is conjugate to $\mathcal{N}_{-7,0}(2^{\infty})$ and if $E$ is represented by $E_{2}$ or $E_{4}$, then $\rho_{E,2^{\infty}}(G_{\QQ})$ is conjugate to $\mathcal{N}_{-7,1}(2^{\infty})$.

\end{example}

Section \ref{sec-background} will be devoted to going over background and some lemmas, Section \ref{sec-Alvaros work} will be devoted to going over work by Lozano-Robledo in classifying the $2$-adic Galois image of elliptic curves defined over $\QQ$ with complex multiplication and Section \ref{proofs} will have the proof of Proposition \ref{propmain} and will fully classify the $2$-adic Galois image attached to isogeny-torsion graphs defined over $\QQ$ with complex multiplication. The proof will be broken up into many steps, appealing to isogeny graphs and \textit{j}-invariants.

\begin{proposition}\label{propmain}
Let $\mathcal{G}$ be a CM isogeny-torsion graph defined over $\QQ$. Then $\mathcal{G}$ fits into Table \ref{TableA} or Table \ref{TableB} with the given classification of the corresponding $2$-adic Galois image of its vertices. Examples of the possible CM isogeny-torsion graphs with the given $2$-adic Galois image classification are provided in the final column of each table.
\newpage
\begin{center}
\begin{table}[h!]
\renewcommand{\arraystretch}{1.3}
\scalebox{0.49}{
    \begin{tabular}{|c|c|c|c|c|c|c|}
    \hline
        Isogeny Graph & Torsion & $\rho_{E_{1},2^{\infty}}(G_{\QQ})$ & $\rho_{E_{2},2^{\infty}}(G_{\QQ})$ & $\rho_{E_{3},2^{\infty}}(G_{\QQ})$ & $\rho_{E_{4},2^{\infty}}(G_{\QQ})$ & Example\\
         \hline
        \multirow{2}*{\includegraphics[width=40mm]{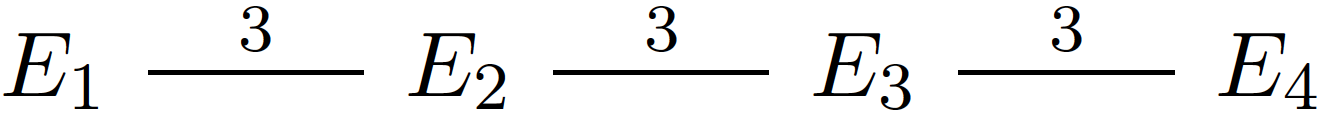}} & $([3],[3],[3],[1])$ & \multirow{2}*{$\mathcal{N}_{-1,1}(2^{\infty})$} & \multirow{2}*{$\mathcal{N}_{-1,1}(2^{\infty})$} & \multirow{2}*{$\mathcal{N}_{-1,1}(2^{\infty})$} & \multirow{2}*{$\mathcal{N}_{-1,1}(2^{\infty})$} & \texttt{27.a} \\
        \cline{2-2}
        \cline{7-7}
        & $([1],[1],[1],[1])$ & & & & & \texttt{432.e} \\
        \cline{1-7}
        
        \multirow{4}*{\includegraphics[width=20mm]{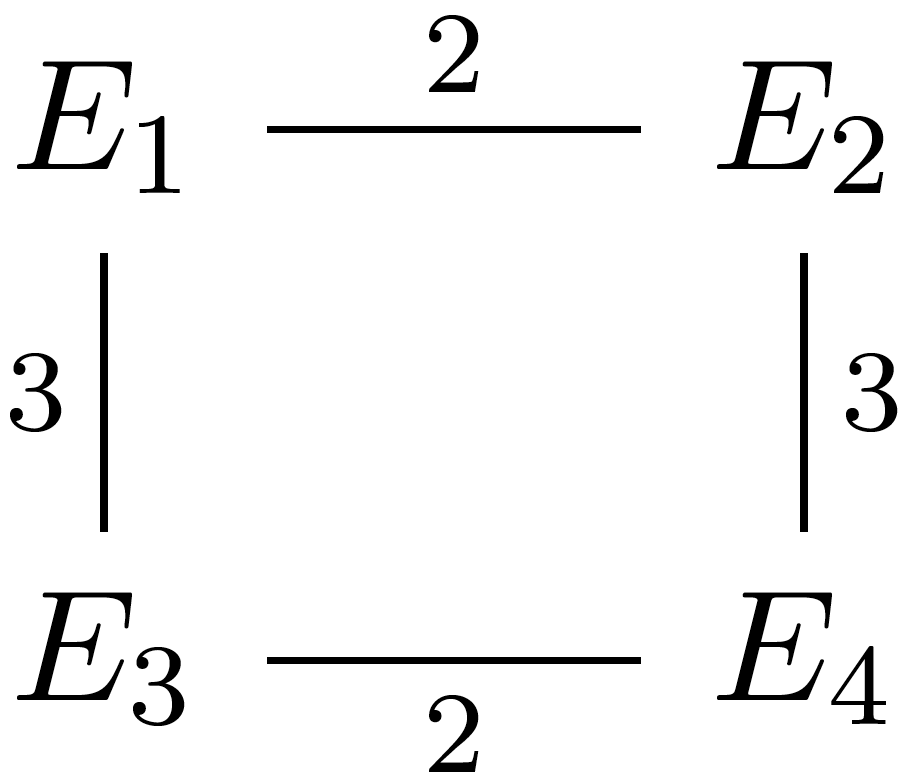}} & \multirow{2}*{$([6],[6],[2],[2])$} & \multirow{4}*{$\left\langle \operatorname{-Id}, \begin{bmatrix} 0 & 1 \\ 1 & 0 \end{bmatrix}, \begin{bmatrix} 7 & 4 \\ -4 & 3 \end{bmatrix}, \begin{bmatrix} 3 & 6 \\ -6 & -3 \end{bmatrix} \right\rangle$} & \multirow{4}*{$\mathcal{N}_{-3,0}(2^{\infty})$} & \multirow{4}*{$\left\langle \operatorname{-Id}, \begin{bmatrix} 0 & 1 \\ 1 & 0 \end{bmatrix}, \begin{bmatrix} 7 & 4 \\ -4 & 3 \end{bmatrix}, \begin{bmatrix} 3 & 6 \\ -6 & -3 \end{bmatrix} \right\rangle$} & \multirow{4}*{$\mathcal{N}_{-3,0}(2^{\infty})$} & \multirow{2}*{\texttt{36.a}} \\
        & & & & & & \\
        \cline{2-2}
        \cline{7-7}
        & \multirow{2}*{$([2],[2],[2],[2])$} & & & & & \multirow{2}*{\texttt{144.a}} \\
        & & & & & & \\
        \cline{1-7}
        
        \multirow{4}*{\includegraphics[width=20mm]{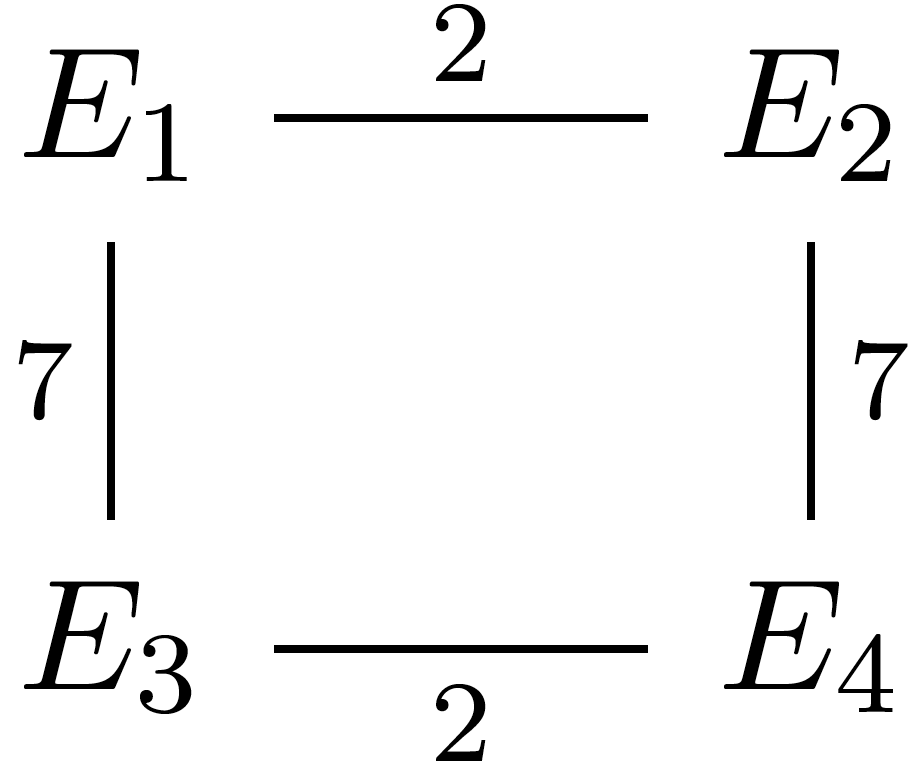}} & \multirow{4}*{$([2],[2],[2],[2])$} & \multirow{4}*{$\mathcal{N}_{-7,0}(2^{\infty})$} & \multirow{4}*{$\mathcal{N}_{-2,1}(2^{\infty})$} & \multirow{4}*{$\mathcal{N}_{-7,0}(2^{\infty})$} & \multirow{4}*{$\mathcal{N}_{-2,1}(2^{\infty})$} & \multirow{4}*{\texttt{49.a}} \\
        & & & & & & \\
        & & & & & & \\
        & & & & & & \\
        \hline
        
        \multirow{6}*{\includegraphics[width = 25mm]{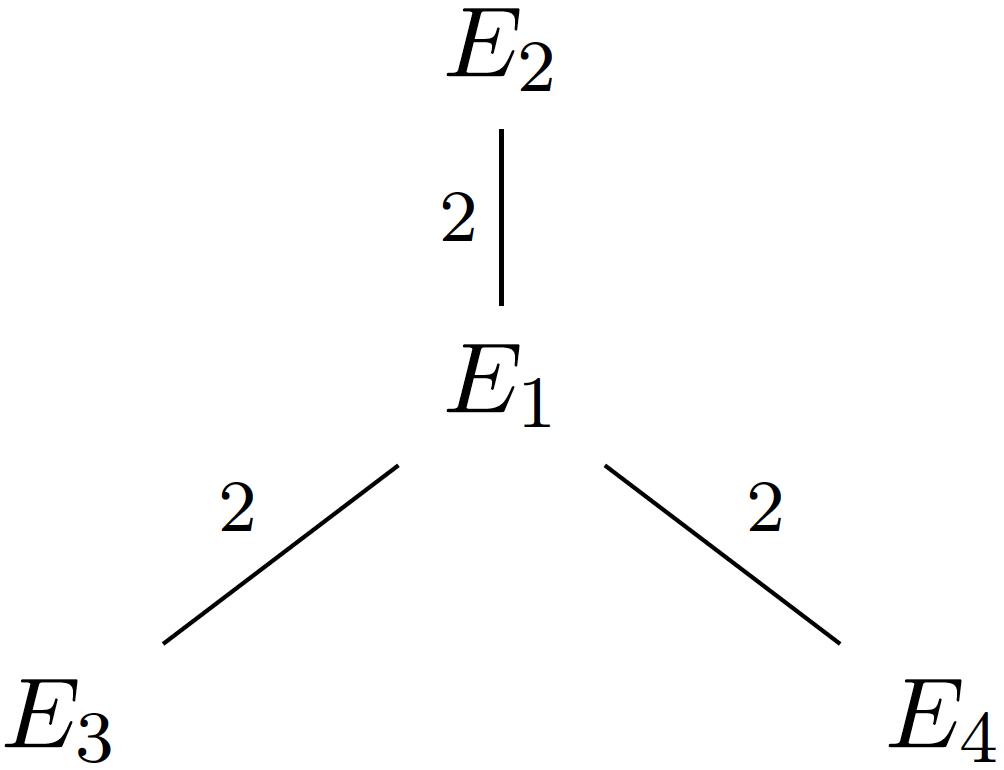}} & \multirow{2}*{$([2,2],[4],[4],[2])$} & \multirow{2}*{$\left\langle 5 \cdot \operatorname{Id} \begin{bmatrix} -1 & -2 \\ 2 & -1 \end{bmatrix}, \begin{bmatrix} 1 & 0 \\ 0 & -1 \end{bmatrix} \right\rangle$} & \multirow{2}*{$\left\langle 5 \cdot \operatorname{Id} \begin{bmatrix} -1 & -2 \\ 2 & -1 \end{bmatrix}, \begin{bmatrix} 0 & 1 \\ 1 & 0 \end{bmatrix} \right\rangle$} & \multirow{2}*{$\left\langle 5 \cdot \operatorname{Id}, \begin{bmatrix} 1 & 0 \\ 0 & -1 \end{bmatrix}, \begin{bmatrix} -1 & -1 \\ 4 & -1 \end{bmatrix} \right\rangle$} & \multirow{2}*{$\left\langle 5 \cdot \operatorname{Id}, \begin{bmatrix} -1 & 0 \\ 0 & 1 \end{bmatrix}, \begin{bmatrix} -1 & -1 \\ 4 & -1 \end{bmatrix} \right\rangle$} & \multirow{2}*{\texttt{32.a}} \\
        & & & & & & \\
        \cline{2-7}
        
        & \multirow{2}*{$([2,2],[2],[4],[2])$} & \multirow{2}*{$\left\langle 5 \cdot \operatorname{Id} \begin{bmatrix} 1 & 2 \\ -2 & 1 \end{bmatrix}, \begin{bmatrix} 1 & 0 \\ 0 & -1 \end{bmatrix} \right\rangle$} & \multirow{2}*{$\left\langle 5 \cdot \operatorname{Id} \begin{bmatrix} 1 & 2 \\ -2 & 1 \end{bmatrix}, \begin{bmatrix} 0 & 1 \\ 1 & 0 \end{bmatrix} \right\rangle$} & \multirow{2}*{$\left\langle 5 \cdot \operatorname{Id}, \begin{bmatrix} 1 & 0 \\ 0 & -1 \end{bmatrix}, \begin{bmatrix} 1 & 1 \\ -4 & 1 \end{bmatrix} \right\rangle$} & \multirow{2}*{$\left\langle 5 \cdot \operatorname{Id}, \begin{bmatrix} -1 & 0 \\ 0 & 1 \end{bmatrix}, \begin{bmatrix} 1 & 1 \\ -4 & 1 \end{bmatrix} \right\rangle$} & \multirow{2}*{\texttt{64.a}} \\
        & & & & & & \\
        \cline{2-7}
        
        & \multirow{2}*{$([2,2],[2],[2],[2])$} & \multirow{2}*{$\left\langle \operatorname{-Id}, 3 \cdot \operatorname{Id}, \begin{bmatrix} 1 & 2 \\ -2 & 1 \end{bmatrix}, \begin{bmatrix} 1 & 0 \\ 0 & -1 \end{bmatrix} \right\rangle$} & \multirow{2}*{$\left\langle \operatorname{-Id}, 3 \cdot \operatorname{Id}, \begin{bmatrix} 1 & 2 \\ -2 & 1 \end{bmatrix}, \begin{bmatrix} 0 & 1 \\ 1 & 0 \end{bmatrix} \right\rangle$} & \multirow{2}*{$\mathcal{N}_{-4,0}(2^{\infty})$} & \multirow{2}*{$\mathcal{N}_{-4,0}(2^{\infty})$} & \multirow{2}*{288.d} \\
        & & & & & & \\
        \hline

\end{tabular}}
	\caption{}
 	\label{TableA}
\end{table}
\end{center}

\begin{center}
\begin{table}[h!]
\renewcommand{\arraystretch}{1.3}
\scalebox{0.79}{
    \begin{tabular}{|c|c|c|c|c|c|}
    \hline
    Isogeny Graph & $p$ & Torsion & $\rho_{E_{1},2^{\infty}}(G_{\QQ})$ & $\rho_{E_{2},2^{\infty}}(G_{\QQ})$ & Example \\
    \hline
    \multirow{26}*{\includegraphics[width = 40mm]{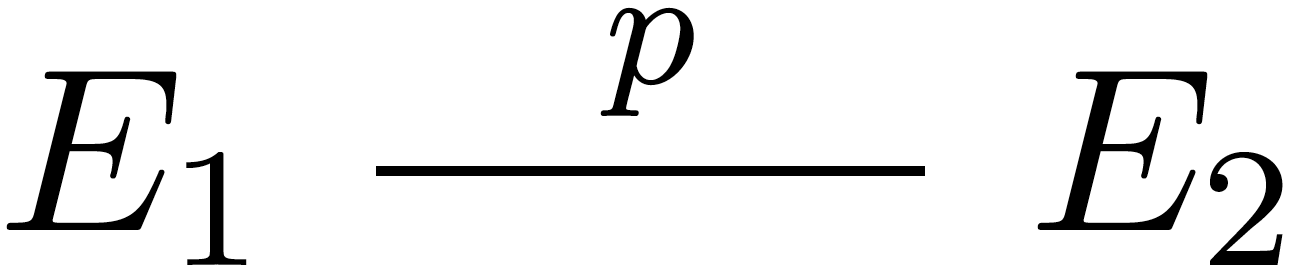}} & \multirow{12}*{$2$} & \multirow{12}*{$([2],[2])$} & \multirow{2}*{$\left\langle 3 \cdot \operatorname{Id}, \begin{bmatrix} 1 & 0 \\ 0 & -1 \end{bmatrix}, \begin{bmatrix} 1 & 1 \\ -2 & 1 \end{bmatrix} \right\rangle$} & \multirow{2}*{$\left\langle 3 \cdot \operatorname{Id}, \begin{bmatrix} -1 & 0 \\ 0 & 1 \end{bmatrix}, \begin{bmatrix} 1 & 1 \\ -2 & 1 \end{bmatrix} \right\rangle$} & \multirow{2}*{\texttt{256.a}} \\
    & & & & & \\
    \cline{4-6}
    & & & \multirow{2}*{$\mathcal{N}_{-2,0}(2^{\infty})$} & \multirow{2}*{$\mathcal{N}_{-2,0}(2^{\infty})$} & \multirow{2}*{\texttt{2304.h}} \\
    & & & & & \\
    \cline{4-6}
    & & & \multirow{2}*{$\left\langle -\operatorname{Id}, 3 \cdot \operatorname{Id}, \begin{bmatrix} 2 & 1 \\ -1 & 2 \end{bmatrix}, \begin{bmatrix} 1 & 0 \\ 0 & -1 \end{bmatrix} \right\rangle$} & \multirow{2}*{$\left\langle -\operatorname{Id}, 3 \cdot \operatorname{Id}, \begin{bmatrix} 2 & 1 \\ -1 & 2 \end{bmatrix}, \begin{bmatrix} 0 & 1 \\ 1 & 0 \end{bmatrix} \right\rangle$} & \multirow{2}*{\texttt{2304.a}} \\
    & & & & & \\
    \cline{4-6}
    & & & \multirow{2}*{$\left\langle 3 \cdot \operatorname{Id}, \begin{bmatrix} 2 & -1 \\ 1 & 2 \end{bmatrix}, \begin{bmatrix} 1 & 0 \\ 0 & -1 \end{bmatrix} \right\rangle$} & \multirow{2}*{$\left\langle 3 \cdot \operatorname{Id}, \begin{bmatrix} 2 & -1 \\ 1 & 2 \end{bmatrix}, \begin{bmatrix} 0 & 1 \\ 1 & 0 \end{bmatrix} \right\rangle$} & \multirow{2}*{\texttt{256.c}} \\
    & & & & & \\
    \cline{4-6}
    & & & \multirow{2}*{$\left\langle 3 \cdot \operatorname{Id}, \begin{bmatrix} -2 & 1 \\ -1 & -2 \end{bmatrix}, \begin{bmatrix} 1 & 0 \\ 0 & -1 \end{bmatrix} \right\rangle$} & \multirow{2}*{$\left\langle 3 \cdot \operatorname{Id}, \begin{bmatrix} -2 & 1 \\ -1 & -2 \end{bmatrix}, \begin{bmatrix} 0 & 1 \\ 1 & 0 \end{bmatrix} \right\rangle$} & \multirow{2}*{\texttt{256.b}} \\
    & & & & & \\
    \cline{4-6}
    & & & \multirow{2}*{$\mathcal{N}_{-1,0}(2^{\infty})$} & \multirow{2}*{$\mathcal{N}_{-1,0}(2^{\infty})$} & \multirow{2}*{\texttt{288.a}} \\
    & & & & & \\
    \cline{2-6}
    & \multirow{4}*{$3$} & \multirow{2}*{$([3],[1])$} & \multirow{4}*{$\mathcal{N}_{-1,1}(2^{\infty})$} & \multirow{4}*{$\mathcal{N}_{-1,1}(2^{\infty})$} & \multirow{2}*{\texttt{108.a}} \\
    & & & & & \\
    \cline{3-3}
    \cline{6-6}
    & & \multirow{2}*{$([1],[1])$} & & & \multirow{2}*{\texttt{225.c}} \\
    & & & & & \\
    \cline{2-6}
    & \multirow{2}*{$11$} & \multirow{10}*{$([1],[1])$} & \multirow{2}*{$\mathcal{N}_{-3,1}(2^{\infty})$} & \multirow{2}*{$\mathcal{N}_{-3,1}(2^{\infty})$} & \multirow{2}*{\texttt{121.b}} \\
    & & & & & \\
    \cline{2-2}
    \cline{4-6}
    & \multirow{2}*{$19$} & & \multirow{2}*{$\mathcal{N}_{-5,1}(2^{\infty})$} & \multirow{2}*{$\mathcal{N}_{-5,1}(2^{\infty})$} & \multirow{2}*{\texttt{361.a}} \\
    & & & & & \\
    \cline{2-2}
    \cline{4-6}
    & \multirow{2}*{$43$} & & \multirow{2}*{$\mathcal{N}_{-11,1}(2^{\infty})$} & \multirow{2}*{$\mathcal{N}_{-11,1}(2^{\infty})$} & \multirow{2}*{\texttt{1849.b}} \\
    & & & & & \\
    \cline{2-2}
    \cline{4-6}
    & \multirow{2}*{$67$} & & \multirow{2}*{$\mathcal{N}_{-17,1}(2^{\infty})$} & \multirow{2}*{$\mathcal{N}_{-17,1}(2^{\infty})$} & \multirow{2}*{\texttt{4489.b}} \\
    & & & & & \\
    \cline{2-2}
    \cline{4-6}
    & \multirow{2}*{$163$} & & \multirow{2}*{$\mathcal{N}_{-41,1}(2^{\infty})$} & \multirow{2}*{$\mathcal{N}_{-41,1}(2^{\infty})$} & \multirow{2}*{\texttt{26569.a}} \\
    & & & & & \\
\hline
\end{tabular}}
	\caption{}
 	\label{TableB}
\end{table}
\end{center}
\end{proposition}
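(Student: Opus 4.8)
\emph{Proof strategy.} The argument runs through Table~\ref{tab-CMgraphs} one graph type at a time, using two reductions. First, if two vertices $E,E'$ of $\mathcal G$ are joined by a cyclic $\QQ$-isogeny of odd prime degree, then by Corollary~\ref{coprime isogeny-degree} the images $\rho_{E,2^\infty}(G_{\QQ})$ and $\rho_{E',2^\infty}(G_{\QQ})$ are conjugate in $\GL(2,\ZZ_2)$; hence the $2$-adic image (up to conjugacy) is constant on the connected components of the graph obtained by deleting all degree-$2$ edges. Second, for a $\QQ$-isogeny $\phi\colon E\to E'$ of degree $2$, the induced map $\phi_*\colon T_2E\to T_2E'$ is $G_{\QQ}$-equivariant with cokernel $\ZZ/2\ZZ$, so choosing $\ZZ_2$-bases of $T_2E$ and $T_2E'$ in which $\phi_*$ has matrix $M=\smallmat{2}{0}{0}{1}$ (equivalently, in which $\tfrac12$ of the first basis vector of $T_2E$ generates $\ker\phi\subseteq E[2]$) gives $\rho_{E',2^\infty}(\sigma)=M\,\rho_{E,2^\infty}(\sigma)\,M^{-1}$ for all $\sigma\in G_{\QQ}$, whence $\rho_{E',2^\infty}(G_{\QQ})=M\,\rho_{E,2^\infty}(G_{\QQ})\,M^{-1}$ (which necessarily lands back in $\GL(2,\ZZ_2)$). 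So it suffices to compute the image at one vertex of each ``odd component'' and propagate it along a spanning set of degree-$2$ edges.

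The base-vertex computations are carried out with the classification of $2$-adic images of CM elliptic curves over $\QQ$ recalled in Section~\ref{sec-Alvaros work}. Each of the thirteen rational CM $j$-invariants appearing in Table~\ref{tab-CMgraphs} determines its CM order $\mathcal O$, hence the discriminant $\phi^2+4\delta$ of $\mathcal O$, hence the Cartan $\mathcal C_{\delta,\phi}(2^\infty)$ whose $2$-adic normalizer $\mathcal N_{\delta,\phi}(2^\infty)$ contains the image; the specific Weierstrass model (i.e.\ the twist) recorded in Table~\ref{tab-CMgraphs} then pins the image down to either $\mathcal N_{\delta,\phi}(2^\infty)$ itself or to one of the explicit index-$2$ or index-$4$ subgroups written out in Tables~\ref{TableA}--\ref{TableB}. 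For the graph types all of whose edges have odd degree — namely $L_4$ and $L_2(p)$ with $p\in\{3,11,19,43,67,163\}$ — this already completes the proof, since the whole graph is one odd component: one reads off the common image from Section~\ref{sec-Alvaros work} and checks that replacing the displayed model by a sextic twist by a cube (when $j\in\{0,-12288000\}$) or by a quadratic twist by a square does not change the $2$-adic image, which is exactly why the torsion configuration is irrelevant in those rows.

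For the graphs with essential degree-$2$ isogenies — $L_2(2)$, $R_4(6)$, $R_4(14)$, and $T_4$ — I fix the base vertex $E_1$, compute $\rho_{E_1,2^\infty}(G_{\QQ})$ as above, and conjugate by the transition matrices of the first reduction along a spanning tree of $2$-edges: a single $2$-edge for $L_2(2)$, $R_4(6)$, and $R_4(14)$, and the three $2$-edges emanating from the central vertex of the star-shaped $T_4$ (there all three order-$2$ subgroups of $E_1[2]$ are $\QQ$-rational, so all three isogenies must be tracked). In each case the conjugated group is shown to equal the claimed entry by comparing the two closed subgroups of $\GL(2,\ZZ_2)$ modulo a sufficiently large power of $2$ — their common level is bounded by the general structure of CM $2$-adic images, and equality of these reductions together with equality of the ambient Cartan/normalizer forces equality. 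This step also accounts for the ``conductor shift'' visible in the tables: conjugating a normalizer-of-Cartan group by $\smallmat{2}{0}{0}{1}$ sends, for example, $\mathcal N_{-2,1}(2^\infty)$ (discriminant $-7$) to $\mathcal N_{-7,0}(2^\infty)$ (discriminant $-28$) in the $R_4(14)$ case, and similarly for $d_K=-3$ in $R_4(6)$ and $d_K=-4$ in $T_4$; for the two rectangular graphs one also gets the automatic consistency check that going once around the $4$-cycle (two degree-$2$ steps and two odd-degree steps) returns the starting group up to conjugacy.

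The main obstacle is bookkeeping rather than a single hard idea. One must (i) correctly determine, for every vertex in every sub-case of Table~\ref{tab-CMgraphs}, which twist of the relevant CM curve it is (a quadratic twist when $j\neq 0,1728$, a sextic or quartic twist when $j=0$ or $1728$), since that is exactly what separates $\mathcal N_{\delta,\phi}(2^\infty)$ from its listed proper subgroups; and (ii) correctly identify, for each degree-$2$ edge, the Galois-stable line $\ker\phi\subseteq E[2]$ that fixes the transition matrix, because the two admissible choices (conjugation by $\smallmat{2}{0}{0}{1}$ versus by $\smallmat{1}{0}{0}{2}$ in the chosen basis) give different target groups — they can toggle the parameter $\phi$ between $0$ and $1$. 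The $T_4$ case is the most delicate on both fronts: the orders of discriminant $-4$ and $-16$ split into three sub-cases (the three $T_4$ torsion configurations), each a different twist with its own base group, and the central vertex $E_1$ has full rational $2$-torsion, so all three of its outgoing $2$-isogenies must be propagated simultaneously and their images checked to be mutually consistent.
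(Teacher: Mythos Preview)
Your strategy is sound and would give a correct proof, but it differs from the paper's own argument in one essential respect. Both you and the paper use Corollary~\ref{coprime isogeny-degree} to collapse the odd-degree edges, so you agree on the $L_4$ and $L_2(p)$ ($p$ odd) cases. Where you diverge is in how the degree-$2$ edges are handled. You propose to compute the image once per odd component and then \emph{propagate} it along $2$-isogenies by conjugating by $\smallmat{2}{0}{0}{1}$ (this is exactly the mechanism of Lemma~\ref{ell-adic Galois images} with $\ell=2$, $r=1$). The paper, by contrast, never performs this conjugation explicitly: instead it \emph{recomputes} the image independently at each vertex from the $j$-invariant and CM order there, applying Theorems~\ref{Theorem 1.6}--\ref{Theorem 1.8} and their corollaries directly. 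To pin down which subgroup of $\mathcal{N}_{\delta,\phi}(2^\infty)$ actually occurs, the paper relies on (i) Hensel-lemma arguments showing $-\mathrm{Id}$ lies in every index-$2$ subgroup (hence the image is quadratic-twist invariant, so it equals the full normalizer), (ii) Corollary~\ref{contains -Id} to propagate the presence or absence of $-\mathrm{Id}$ across isogenies, and (iii) explicit LMFDB examples and citations to worked examples in \cite{al-rCMGRs} (Examples 9.4 and 9.8) to realize and distinguish the remaining cases, especially for $j=8000$ and $j=1728$.

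Your approach is more uniform and algorithmic, and makes the ``conductor shift'' (e.g.\ $\mathcal{N}_{-2,1}\leadsto\mathcal{N}_{-7,0}$) transparent as a single matrix conjugation. The paper's approach avoids tracking transition matrices and bases across isogenies but pays for it by invoking more of the external classification and several ad hoc twist-stability computations. Either route leads to Tables~\ref{TableA}--\ref{TableB}.
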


\begin{ack}
 	The author would like to express his gratitude to \'Alvaro Lozano-Robledo for many helpful conversations about this topic and providing invaluable code to help with computations.
 \end{ack}

\section{Background and some lemmas}\label{sec-background}

\subsection{Elliptic curves, isogeny graphs, and isogeny-torsion graphs}

Let $E/\QQ$ be an elliptic curve. Then $E$ has the structure of an abelian group. Let $N$ be a positive integer. The set of points on $E$ of order dividing $N$ with coordinates in $\overline{\QQ}$ is a group, denoted $E[N]$ and is isomorphic to $\ZZ / N \ZZ \times \ZZ / N \ZZ$. An element of $E[N]$ is called an $N$-torsion point. Let $E/\QQ$ and $E'/\QQ$ be elliptic curves. An isogeny mapping $E$ to $E'$ is a non-constant morphism $\phi \colon E \to E'$ that maps the identity of $E$ to the identity of $E'$. An isogeny is a group homomorphism with a kernel of finite order. The degree of an isogeny agrees with the order of its kernel.

Let $M$ be an integer and let $[M] \colon E \to E$ be the map such that
\begin{center}
    $\begin{cases}
    [M](P) = \underbrace{P+ \ldots +P}_{\text{M}} & M \geq 1 \\
    [M](P) = \underbrace{(-P) + \ldots +(-P)}_{-M} & M \leq -1 \\
    [M](P) = \mathcal{O} & M = 0
    \end{cases}$
\end{center}
We call the map $[M]$ the multiplication-by-$M$ map. The endomorphism ring of $E$ is the set of all isogenies mapping $E$ to $E$, denoted, $\operatorname{End}(E)$. All of the multiplication-by-$M$ maps are elements of $\operatorname{End}(E)$. If $\operatorname{End}(E)$ consists solely of the multiplication-by-$M$ maps, then $\operatorname{End}(E)$ is ring-isomorphic to $\ZZ$ and $E$ is said to not have complex multiplication (CM). Otherwise, $E$ has CM and $\operatorname{End}(E)$ is isomorphic as a ring to an order in a quadratic field.

\begin{example}
Let $E$ be the elliptic curve with LMFDB label \texttt{11.a1}. Then $E$ does not have CM. In other words, $\operatorname{End}(E) \cong \ZZ$.
\end{example}

\begin{example}
Let $E$ be the elliptic curve $y^{2} = x^{3} - x$. Consider the isogeny $[i] \colon E \to E$ that maps $\mathcal{O}$ to $\mathcal{O}$ and maps a point $(a,b)$ in $E$ to the point $(-a,ib)$. Thus, $[i]$ maps non-zero points on $E$ to non-zero points on $E$ and hence, the degree of $[i]$ is equal to $1$. As $[i]$ is not equal to the identity or inversion maps, $[i]$ is an endomorphism of $E$ that is not a multiplication-by-$M$ map. Hence, $E$ has CM and $\operatorname{End}(E) = \ZZ + [i] \cdot \ZZ \cong \ZZ[i]$. Note that $i$ in $\ZZ + [i] \cdot \ZZ$ designates the map $[i]$ and the $i$ in $\ZZ[i]$ designates a root of $x^{2}+1$. 
\end{example}

Let $E/\QQ$ be a homogenized elliptic curve. The group $G_{\QQ}:= \operatorname{Gal}(\overline{\QQ}/\QQ)$ has a natural action on $E[N]$ for all positive integers $N$; for each $[a,b,c] \in E$ and each $\sigma \in G_{\QQ}$, we have
$$\sigma \cdot [a,b,c] = [\sigma(a),\sigma(b),\sigma(c)].$$
From this action, we have the mod-$N$ Galois representation attached to $E$:
$$ \overline{\rho}_{E,N} \colon G_{\QQ} \to \operatorname{Aut}(E[N]).$$
After identifying $E[N] \cong \ZZ / N \ZZ \times \ZZ / N \ZZ$ and fixing a set of (two) generators of $E[N]$, we may consider the mod-$N$ Galois representation attached to $E$ as
$$\overline{\rho}_{E,N} \colon G_{\QQ} \to \operatorname{GL}(2,\ZZ / N \ZZ).$$
Let $\ell$ be a prime and denote $\rho_{E,\ell^{\infty}}(G_{\QQ}) = \varprojlim \overline{\rho}_{E,\ell^{N}}(G_{\QQ})$. In particular, the group $\rho_{E,2^{\infty}}(G_{\QQ})$ is the main focus of this paper. Let $u$ be an element of $\left(\ZZ / N \ZZ \right)^{\times}$. By the properties of the Weil pairing, there exists an element of $\overline{\rho}_{E,N}(G_{\QQ})$ whose determinant is equal to $u$. Moreover, $\overline{\rho}_{E,N}(G_{\QQ})$ has an element that behaves like complex conjugation.

\begin{definition}
Let $E/\QQ$ be a (homogenized) elliptic curve. A point $P$ on $E$ is said to be defined over $\QQ$ if $P = [a:b:c]$ for some $a,b,c \in \QQ$.
\end{definition}
The set of all points on $E$ defined over $\QQ$ is denoted $E(\QQ)$. By the Mordell--Weil theorem, $E(\QQ)$ has the structure of a finitely-generated abelian group. Let $E(\QQ)_{\text{tors}}$ denote the set of points on $E$ defined over $\QQ$ of finite order.

\begin{thm}[Mazur \cite{mazur1}]\label{thm-mazur}
		Let $E/\Q$ be an elliptic curve. Then
		\[
		E(\Q)_\tor\simeq
		\begin{cases}
		\Z/M\Z &\text{with}\ 1\leq M\leq 10\ \text{or}\ M=12,\ \text{or}\\
		\Z/2\Z \times \Z/2N\Z &\text{with}\ 1\leq N \leq 4.
		\end{cases}
		\]
	\end{thm}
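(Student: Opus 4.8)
The plan is to reduce the classification of $E(\Q)_\tor$ to the arithmetic of modular curves, which is how Mazur established it. First I would fix the possible abstract group structures. Any finite subgroup of $E(\overline{\Q})$ is isomorphic to $\Z/M\Z \times \Z/M'\Z$ with $M \mid M'$, so $E(\Q)_\tor$ has this shape. If $E[d] \subseteq E(\Q)$ for some $d > 1$, then the nondegeneracy and Galois-equivariance of the Weil pairing force $\mu_d \subseteq \Q$, hence $d \le 2$. Consequently $E(\Q)_\tor$ is either cyclic, $\Z/M\Z$, or of the form $\Z/2\Z \times \Z/2N\Z$, and the whole problem becomes determining which values of $M$ (respectively $N$) actually occur over $\Q$.

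The next step is to translate occurrence into the existence of noncuspidal rational points on modular curves. A curve $E/\Q$ with a rational point of exact order $M$ is precisely the datum of a noncuspidal point of $X_1(M)(\Q)$, and a full $\Z/2\Z \times \Z/2N\Z$ structure corresponds to a noncuspidal rational point on the modular curve parametrizing elliptic curves equipped with such a level structure. Thus the two claims to be proved are that $X_1(M)(\Q)$ carries a noncuspidal point exactly when $M \in \{1,\dots,10,12\}$, and that the $\Z/2\Z \times \Z/2N\Z$ modular curve does so exactly when $1 \le N \le 4$.

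For the affirmative direction I would carry out the genus computations: $X_1(M)$ has genus $0$ for $M \le 10$ and $M = 12$, and the corresponding $\Z/2\Z \times \Z/2N\Z$ curves have genus $0$ for $N \le 4$. Each of these already possesses rational cusps, so as a genus-$0$ curve with a rational point it is isomorphic to $\PP^1_\Q$ and has infinitely many rational points; discarding the finitely many cusps exhibits the prescribed torsion on infinitely many curves (and I would record explicit universal parametrizations, in the style of Kubert's tables, to confirm noncuspidality). The borderline levels, where the relevant modular curve has genus $1$ — the cyclic orders $11$, $14$, $15$ and the analogue $\Z/2\Z \times \Z/10\Z$ — I would eliminate by identifying the Jacobian with an explicit elliptic curve, computing its Mordell--Weil group, checking that $\rank = 0$, and enumerating the resulting finite torsion to verify that every rational point is a cusp.

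The genuinely hard part is the uniform elimination of all large orders, above all ruling out rational points of prime order $p \ge 13$; once this is in place, the Weil-pairing reduction plus the genus-$1$ exclusions plus simple bookkeeping over the admissible prime factors $p \mid M$ (constrained to $\{2,3,5,7\}$) yields the stated list. Here I would invoke Mazur's analysis of $X_0(p)$ through the Eisenstein ideal: one studies the Hecke action on $J_0(p)$, shows that the Eisenstein quotient has Mordell--Weil rank $0$ over $\Q$, and concludes that for $p$ in the relevant range the only rational points of $X_0(p)$ are its two cusps, so no $E/\Q$ admits a rational subgroup of order $p$ — in particular no rational $p$-torsion point. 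This Eisenstein-ideal argument, which controls the rational points on the infinite family of positive-genus curves $X_0(p)$ uniformly in $p$, is the crux and the main obstacle; the genus and rank computations above are routine by comparison, and the final assembly is purely combinatorial.
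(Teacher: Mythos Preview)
Your sketch is a faithful outline of Mazur's original argument, and there is nothing to compare it against: the paper does not prove this theorem at all but simply quotes it with a citation to \cite{mazur1} as background. So your proposal is not wrong, but it is far more than what the paper itself supplies.
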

Moreover, each of the fifteen torsion subgroups occur for infinitely many \textit{j}-invariants. We now move on to the possible isogenies with finite, cyclic kernel.

\begin{defn}
Let $E/\QQ$ be an elliptic curve. A subgroup $H$ of $E$ of finite order is said to be $\QQ$-rational if $\sigma(H) = H$ for all $\sigma \in G_{\QQ}$.
\end{defn}

\begin{remark}

Note that for an elliptic curve $E / \QQ$, a group generated by a point $P$ on $E$ defined over $\QQ$ of finite order is certainly a $\QQ$-rational group but in general, the elements of a $\QQ$-rational subgroup of $E$ need not be \textit{fixed} by $G_{\QQ}$. For example, $E[3]$ is a $\QQ$-rational subgroup of $E$ of order $9$ and $G_{\QQ}$ fixes one or three of the nine elements of $E[3]$ by Theorem \ref{thm-mazur}.
\end{remark}

\begin{lemma}[III.4.12, \cite{Silverman}]\label{Q-rational}
Let $E/\QQ$ be an elliptic curve. Then for each finite, cyclic, $\QQ$-rational subgroup $H$ of $E$, there is a unique elliptic curve defined over $\QQ$ up to isomorphism denoted $E / H$, and an isogeny $\phi_{H} \colon E \to E / H$ with kernel $H$.
\end{lemma}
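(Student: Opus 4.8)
The plan is to build the quotient and the isogeny first over $\overline{\QQ}$, where cyclicity of $H$ is irrelevant and any finite subgroup is handled uniformly, and then to descend both the target curve and the map to $\QQ$ using the $\QQ$-rationality hypothesis $\sigma(H)=H$. Over $\overline{\QQ}$ the construction is classical: for each $P\in H$ let $\tau_P\colon E\to E$, $Q\mapsto Q+P$, be the translation-by-$P$ automorphism, and let $\overline{\QQ}(E)^{H}$ denote the subfield of the function field $\overline{\QQ}(E)$ fixed by all the pullbacks $\tau_P^{\ast}$ with $P\in H$. This fixed field is the function field of a smooth projective curve $E'$, and the inclusion $\overline{\QQ}(E)^{H}\hookrightarrow\overline{\QQ}(E)$ induces a nonconstant morphism $\phi\colon E\to E'$. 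Because $|H|$ is prime to the characteristic, $\phi$ is separable of degree $|H|$ with kernel exactly $H$; one checks $E'$ has genus $1$, and declaring $\phi(\mathcal{O})$ to be the origin makes $E'$ an elliptic curve and $\phi$ a homomorphism. This is exactly the statement of the lemma over $\overline{\QQ}$.

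For the descent I would argue at the level of function fields, which is where the hypothesis on $H$ enters cleanly. Identify $\overline{\QQ}(E)=\overline{\QQ}\otimes_{\QQ}\QQ(E)$ and let $G_{\QQ}$ act semilinearly on it through its action on coefficients. For $f\in\overline{\QQ}(E)$ and $\sigma\in G_{\QQ}$ one has $f^{\sigma}\circ\tau_{\sigma(P)}=(f\circ\tau_{P})^{\sigma}$, so if $f$ is fixed by every $\tau_P^{\ast}$ with $P\in H$ then $f^{\sigma}$ is fixed by every $\tau_{Q}^{\ast}$ with $Q\in\sigma(H)=H$. Thus $\overline{\QQ}(E)^{H}$ is a $G_{\QQ}$-stable subfield, and by Galois descent for subfields carrying a semilinear action it is of the form $\overline{\QQ}\otimes_{\QQ}F$ for $F=\overline{\QQ}(E)^{H}\cap\QQ(E)$. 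The function field $F/\QQ$ defines a smooth projective curve over $\QQ$, which I take as $E/H$, and the inclusion $F\hookrightarrow\QQ(E)$ defines $\phi_H\colon E\to E/H$ over $\QQ$; the geometric properties (genus $1$, separability, kernel $H$, and the group law with origin $\phi_H(\mathcal{O})$) all persist because they are preserved under the faithfully flat base change to $\overline{\QQ}$, where they were already verified.

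Finally, uniqueness up to $\QQ$-isomorphism follows from a standard rigidity argument. If $\phi_1\colon E\to E_1$ and $\phi_2\colon E\to E_2$ are two isogenies defined over $\QQ$ with kernel $H$, then over $\overline{\QQ}$ the equality of kernels forces a unique isomorphism $u\colon E_1\to E_2$ with $\phi_2=u\circ\phi_1$; applying $\sigma\in G_{\QQ}$ and using that $\phi_1,\phi_2$ are defined over $\QQ$ shows $u^{\sigma}$ satisfies the same identity, so $u^{\sigma}=u$ by uniqueness and $u$ is defined over $\QQ$. The main obstacle is the descent step: everything hinges on verifying that $\overline{\QQ}(E)^{H}$ is stable under the semilinear $G_{\QQ}$-action, which is precisely the point at which $\QQ$-rationality of $H$ is used, and on invoking Galois descent to produce the $\QQ$-model; the existence over $\overline{\QQ}$ and the rigidity used for uniqueness are routine.
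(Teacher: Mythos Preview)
Your argument is correct and is essentially the standard proof (construct the quotient over $\overline{\QQ}$ via the fixed field of the translation action, then descend using that $\sigma(H)=H$ makes $\overline{\QQ}(E)^{H}$ stable under the semilinear $G_{\QQ}$-action, and deduce uniqueness by rigidity). Note, however, that the paper does not supply its own proof of this lemma at all: it simply quotes the result from Silverman, \textit{The Arithmetic of Elliptic Curves}, III.4.12, so there is no ``paper's proof'' to compare against beyond saying that your outline matches the textbook argument cited there. Your observation that cyclicity of $H$ plays no role in the construction is also accurate; the paper's subsequent Remark explains that cyclicity is imposed only to make the correspondence between subgroups and isogeny targets bijective, not for the existence or uniqueness of $E/H$ itself.
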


\begin{remark}

Note that it is only the elliptic curve $E/H$ that is unique (up to isomorphism) but the isogeny $\phi_{H}$ is not necessarily unique. For any isogeny $\phi$, the isogeny $-\phi$ has the same domain, codomain, and kernel as $\phi$. Moreover, for any positive integer $N$, $\phi$ and $[N] \circ \phi$ have the same domain and the same codomain. This is why the bijection in Lemma \ref{Q-rational} is with \textit{cyclic}, $\QQ$-rational subgroups instead of with all $\QQ$-rational subgroups.
\end{remark}

The $\QQ$-rational points on the modular curves $\operatorname{X}_{0}(N)$ have been described completely in the literature, for all $N\geq 1$. One of the most important milestones in the classification was \cite{mazur1}, where Mazur dealt with the case when $N$ is prime. The complete classification of $\Q$-rational points on $\operatorname{X}_{0}(N)$, for any $N$, was completed due to work by Fricke, Kenku, Klein, Kubert, Ligozat, Mazur  and Ogg, among others (see the summary tables in \cite{lozano0}).
	
	\begin{thm}\label{thm-ratnoncusps} Let $N\geq 2$ be a number such that $\operatorname{X}_{0}(N)$ has a non-cuspidal $\QQ$-rational point. Then:
		\begin{enumerate}
			\item $N\leq 10$, or $N= 12,13, 16,18$ or $25$. In this case $\operatorname{X}_{0}(N)$ is a curve of genus $0$ and its $\Q$-rational points form an infinite $1$-parameter family, or
			\item $N=11,14,15,17,19,21$, or $27$. In this case $\operatorname{X}_{0}(N)$ is a curve of genus $1$, i.e.,~$\operatorname{X}_{0}(N)$ is an elliptic curve over $\Q$, but in all cases the Mordell-Weil group $\operatorname{X}_{0}(N)(\Q)$ is finite, or 
			
			\item $N=37,43,67$ or $163$. In this case $\operatorname{X}_{0}(N)$ is a curve of genus $\geq 2$ and (by Faltings' theorem) there are only finitely many $\Q$-rational points, which are known explicitly.
		\end{enumerate}
	\end{thm}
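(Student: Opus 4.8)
The plan is to reduce to a finite computation by first computing the genus of $\operatorname{X}_{0}(N)$ and then splitting into three regimes, the deepest input being Mazur's prime-level theorem together with Kenku's reduction of composite levels to prime levels; throughout I use that the cusps $0$ and $\infty$ are always $\QQ$-rational, so $\operatorname{X}_{0}(N)(\QQ)\neq\emptyset$. First I would compute the genus $g$ of $\operatorname{X}_{0}(N)$ using Ogg's formula
$$g = 1 + \frac{[\operatorname{SL}(2,\ZZ):\Gamma_{0}(N)]}{12} - \frac{\nu_{2}}{4} - \frac{\nu_{3}}{3} - \frac{\nu_{\infty}}{2},$$
where $\nu_{2},\nu_{3}$ count the elliptic points of orders $2$ and $3$ and $\nu_{\infty}$ counts the cusps, each given by an explicit multiplicative formula in $N$. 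Since the index grows like $N\prod_{p\mid N}(1+1/p)$ while the correction terms grow much more slowly, one has $g\to\infty$; a direct check of small $N$ then isolates exactly the finitely many $N$ with $g=0$ (list (1)), the finitely many with $g=1$ (list (2)), and shows that every other $N$ has $g\geq 2$.

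For $g=0$, a genus-$0$ curve possessing a rational point (here the cusp $\infty$) is $\QQ$-isomorphic to $\PP^{1}$, the isomorphism being furnished by a Hauptmodul for $\Gamma_{0}(N)$; hence $\operatorname{X}_{0}(N)(\QQ)$ is infinite and forms the asserted $1$-parameter family. For $g=1$, the rational cusp makes $\operatorname{X}_{0}(N)$ an elliptic curve over $\QQ$; for each of the seven levels $11,14,15,17,19,21,27$ I would exhibit a Weierstrass model, verify by descent (or cite the standard tables) that its Mordell--Weil rank is $0$, and read off the finite torsion group. Removing the cusps then leaves only finitely many non-cuspidal $\QQ$-rational points, which can be listed explicitly.

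The genus-$\geq 2$ case is the crux and the main obstacle. For $N=p$ prime this is Mazur's theorem: analyzing the Eisenstein ideal in the Hecke algebra acting on $J_{0}(p)$, the structure of the cuspidal subgroup, and the specialization of rational points modulo primes of good reduction, Mazur proves that $\operatorname{X}_{0}(p)(\QQ)$ consists only of cusps except for $p\in\{37,43,67,163\}$, where the finitely many extra non-cuspidal points are known explicitly. For composite $N$ with $g\geq 2$ I would exploit the degeneracy maps $\operatorname{X}_{0}(N)\to\operatorname{X}_{0}(M)$ attached to each proper divisor $M\mid N$: a non-cuspidal $\QQ$-rational point on $\operatorname{X}_{0}(N)$ pushes forward to non-cuspidal $\QQ$-rational points on $\operatorname{X}_{0}(M)$ for every such $M$, and combining these constraints with the prime-level classification, together with a careful level-by-level analysis of the finitely many remaining composite $N$ (for instance distinguishing $27$, which does contribute, from $49$, which does not), rules out all composite levels of genus $\geq 2$. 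This is Kenku's completion of the classification; the hardest single ingredient remains Mazur's prime-level result, while the composite bookkeeping is the place where one must be most careful that no level slips through undetected.
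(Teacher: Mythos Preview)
The paper does not supply its own proof of this theorem: it is quoted as a known result, attributed to the combined work of Fricke, Kenku, Klein, Kubert, Ligozat, Mazur, and Ogg, with a reference to summary tables in \cite{lozano0}. Your sketch correctly identifies the main ingredients of that literature proof---the genus computation, the Hauptmodul argument in genus~$0$, the rank-$0$ verification in genus~$1$, and Mazur's Eisenstein-ideal theorem for prime level followed by Kenku's case-by-case treatment of composite levels in genus~$\geq 2$---so there is nothing to compare against beyond noting that your outline matches the cited sources.

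One small caution: your composite-level paragraph understates the work involved. Degeneracy maps alone do not finish the job, since several composite $N$ of genus~$\geq 2$ have all proper divisors $M$ with $\operatorname{X}_{0}(M)$ of genus~$0$ or~$1$ (e.g.\ $N=26,28,50$), so pushing forward gives no immediate contradiction. Kenku's actual arguments for these residual levels use a mix of explicit Jacobian computations, quotient curves, formal immersions, and ad hoc descent, level by level; this is the ``careful bookkeeping'' you allude to, but it is genuinely the substance of several of Kenku's papers rather than a routine check.
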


\begin{defn}
		Let $E/\Q$ be an elliptic curve. We define $C(E)$ as the number of finite, cyclic, $\QQ$-rational subgroups of $E$ (including the trivial subgroup), and we define $C_p(E)$ similarly to $C(E)$ but only counting cyclic, $\QQ$-rational subgroups of order a power of $p$ (like in the definition of $C(E)$, this includes the trivial subgroup), for each prime $p$. 
	\end{defn}

Notice that it follows from the definition that $C(E)=\prod_p C_p(E)$. 
	
	\begin{thm}[Kenku, \cite{kenku}]\label{thm-kenku} There are at most eight $\Q$-isomorphism classes of elliptic curves in each $\Q$-isogeny class. More concretely, let $E / \Q$ be an elliptic curve, then $C(E)=\prod_p C_p(E)\leq 8$. Moreover, each factor $C_p(E)$ is bounded as follows:
		\begin{center}
			\begin{tabular}{c|ccccccccccccc}
				$p$ & $2$ & $3$ & $5$ & $7$ & $11$ & $13$ & $17$ & $19$ & $37$ & $43$ & $67$ & $163$ & \text{else}\\
				\hline 
				$C_p\leq $ & $8$ & $4$ & $3$ & $2$ & $2$ & $2$ & $2$ & $2$ & $2$ & $2$ & $2$ & $2$ & $1$.
			\end{tabular}
		\end{center}
		Moreover:
		\begin{enumerate}
			\item If $C_{p}(E) = 2$ for a prime $p$ greater than $7$, then $C_{q}(E) = 1$ for all other primes $q$. 
			\item Suppose $C_{7}(E) = 2$, then $C(E) \leq 4$. Moreover, we have $C_{3}(E) = 2$, or $C_{2}(E) = 2$, or $C(E) = 2$.
			\item $C_{5}(E) \leq 3$ and if $C_{5}(E) = 3$, then $C(E) = 3$.
			\item If $C_{5}(E) = 2$, then $C(E) \leq 4$. Moreover, either $C_{3}(E) = 2$, or $C_{2}(E) = 2$, or $C(E) = 2$. 
			\item $C_{3}(E) \leq 4$ and if $C_{3}(E) = 4$, then $C(E) = 4$. 
			\item If $C_{3}(E) = 3,$ then $C(E) \leq 6$. Moreover, $C_{2}(E) = 2$ or $C(E) = 3$.
			\item If $C_{3}(E) = 2$, then $C_{2}(E) \leq 4$.
		\end{enumerate}
	\end{thm}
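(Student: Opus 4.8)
The plan is to turn the combinatorial quantity $C_p(E)$ into a count of Galois-stable lines and then to control that count using the rational points of the modular curves $\operatorname{X}_0(p^m)$. A cyclic, $\QQ$-rational subgroup of $E$ of order $p^m$ is a $G_{\QQ}$-stable cyclic subgroup of $E[p^m]$ of order exactly $p^m$; since its order equals the exponent of $E[p^m]$, it is a free rank-one $\ZZ/p^m\ZZ$-direct summand, hence a point of $\PP^1(\ZZ/p^m\ZZ)$, and $\QQ$-rationality is precisely the condition that this point be fixed by the projective action of $\rho_{E,p^m}(G_{\QQ})$. Writing $F_m$ for the number of fixed points of $\rho_{E,p^m}(G_{\QQ})$ on $\PP^1(\ZZ/p^m\ZZ)$, the trivial subgroup contributes $1$ and
\[
C_p(E) \;=\; 1 + \sum_{m \geq 1} F_m .
\]
By Lemma \ref{Q-rational}, each level-$m$ fixed line is a non-cuspidal $\QQ$-rational point of $\operatorname{X}_0(p^m)$ lying over $j(E)$, so Theorem \ref{thm-ratnoncusps} immediately caps the range of the sum: $F_m=0$ as soon as $p^m$ fails to appear in its list. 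This forces $m\leq 4$ for $p=2$ (as $\operatorname{X}_0(32)$ has no such point), $m\leq 3$ for $p=3$, $m\leq 2$ for $p=5$, $m\leq 1$ for $p\in\{7,11,13,17,19,37,43,67,163\}$, and $m=0$ for every other prime, which already yields the ``else'' column $C_p(E)=1$.

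Next I would bound each $F_m$ within its allowed range. The reduction maps $\PP^1(\ZZ/p^{m}\ZZ)\to\PP^1(\ZZ/p^{m-1}\ZZ)$ are $G_{\QQ}$-equivariant, so the fixed lines at all levels assemble into a rooted forest: each fixed line at level $m\geq 2$ reduces to a unique fixed line at level $m-1$ and sits in a fiber of size $p$, while level $1$ contributes at most $p+1$ roots. For the large primes the forest is tiny: two independent fixed lines mod $p$ would split $E[p]$ and place $E$ in a $p$-isogeny graph with at least three vertices, which the classification of $\QQ$-points on $\operatorname{X}_0(p)$ underlying Theorem \ref{thm-ratnoncusps} forbids for $p>7$; hence $F_1\leq 1$, $F_m=0$ for $m\geq 2$, and $C_p(E)\leq 2$. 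The remaining single-prime bounds $C_7(E)\leq 2$, $C_5(E)\leq 3$, $C_3(E)\leq 4$, and above all $C_2(E)\leq 8$ I would obtain by enumerating, level by level up to the cap from the previous step, the reducible and split shapes that $\rho_{E,p^\infty}(G_{\QQ})$ can take on $\PP^1(\ZZ_p)$, discarding any forest whose branching pattern would force a non-cuspidal $\QQ$-point on a higher $\operatorname{X}_0(p^m)$ that Theorem \ref{thm-ratnoncusps} excludes.

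Finally, the factorization $C(E)=\prod_p C_p(E)$ is the Chinese Remainder Theorem applied to cyclic subgroups of pairwise coprime order, and a cyclic $\QQ$-rational subgroup of order $MN$ with $\gcd(M,N)=1$ is exactly a non-cuspidal $\QQ$-point of $\operatorname{X}_0(MN)$. Several of the ``Moreover'' clauses are then formal consequences of the global bound alone: once $C(E)\leq 8$ is in hand, $C_3(E)=2$ together with $C_2(E)\,C_3(E)\leq C(E)\leq 8$ forces $C_2(E)\leq 4$ (item 7), and similarly the product constraints in items 1--6 reduce to dividing $8$ by the relevant factor. The genuine content, however — both the inequality $C(E)\leq 8$ itself (which is \emph{not} the product of the individual caps, since $8\cdot 4\cdot 3\cdots$ is far larger) and the sharp ``equality/structure'' halves of items 2, 4, 5, and 6 — comes from feeding composite levels into the full classification of $\QQ$-rational points on $\operatorname{X}_0(N)$ recorded in the summary tables cited before Theorem \ref{thm-ratnoncusps}: coexistence of two large prime-power factors is excluded precisely when the corresponding $\operatorname{X}_0(MN)$ has no non-cuspidal $\QQ$-point.

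I expect this last step to be the main obstacle. The per-prime caps and the ``formal'' interaction clauses are routine bookkeeping on fixed points, but the global bound $C(E)\leq 8$ and the exact structure statements require a complete and correct inventory of $\QQ$-points on $\operatorname{X}_0(N)$ for all the composite $N$ built from $\{2,3,5,7\}$ and the large special primes — the Kenku--Mazur--Momose input — together with a careful check that the forest of fixed lines at $p=2$ can reach, but not exceed, seven nontrivial lines. Assembling these modular-curve facts into the precise combinatorial constraints of Theorem \ref{thm-kenku}, rather than any single estimate, is where the real work lies.
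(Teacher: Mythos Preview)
The paper does not prove Theorem~\ref{thm-kenku}; it is quoted verbatim from Kenku~\cite{kenku} as background, with no argument supplied. So there is no ``paper's own proof'' for you to be compared against.

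As a standalone sketch of how Kenku's theorem is established, your outline is broadly faithful to the literature: translate $C_p(E)$ into fixed lines of $\rho_{E,p^\infty}$ on $\PP^1(\ZZ_p)$, cap the depth using the list of $N$ for which $\operatorname{X}_0(N)$ has non-cuspidal rational points, and then rule out the bad composite levels. Two places deserve tightening. First, your argument that $F_1\leq 1$ for $p>7$ is not quite right as stated: two independent $G_{\QQ}$-stable lines in $E[p]$ do not by themselves contradict the $\operatorname{X}_0(p)$ classification (for $p=13$ that curve has infinitely many rational points); the correct mechanism is that two such lines produce, via the chain $E/C_1\to E\to E/C_2$, a cyclic $p^2$-isogeny in the class, hence a non-cuspidal point on $\operatorname{X}_0(p^2)$, which is what the tables exclude. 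Second, you are candid that the heart of the matter --- the global bound $C(E)\leq 8$ and the structural clauses (2)--(6) --- rests on the full inventory of rational points on $\operatorname{X}_0(N)$ for composite $N$, and you defer that inventory to the cited summary tables. That is exactly what Kenku does, but it means your proposal is an outline of a proof rather than a proof: the substantive modular-curve computations (Mazur, Kenku, Momose, Ligozat, et al.) are invoked, not carried out.
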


Instead of viewing each elliptic curve over $\QQ$ in a $\QQ$-isogeny class individually, we can view them all together. The best way to visualize the $\QQ$-isogeny class is to use its associated isogeny graph.
\begin{thm}\label{thm-mainisogenygraphs}
	There are $26$ isomorphism types of isogeny graphs that are associated to $\QQ$-isogeny classes of elliptic curves defined over $\QQ$. More precisely, there are $16$ types of (linear) $L_{k}$ graphs, $3$ types of (nonlinear two-primary torsion) $T_{k}$ graphs, $6$ types of (rectangular) $R_{k}$ graphs, and $1$ type of (special) $S$ graph. Moreover, there are $11$ isomorphism types of isogeny graphs that are associated to $\QQ$-isogeny classes of elliptic curves over $\QQ$ with complex multiplication, namely the types $L_{2}(p)$ for $p=2,3,11,19,43,67,163$, $L_{4}$, $T_{4}$, $R_{4}(6)$, and $R_{4}(14)$. Finally, the isogeny graphs of type $L_{4}$, $R_{4}(14)$, and $L_{2}(p)$ for $p \in \{19, 43, 67, 167\}$ occur exclusively for elliptic curves with CM.
	\end{thm}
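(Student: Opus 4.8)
The plan is to treat the three assertions in turn. The first---that there are exactly $26$ isomorphism types of isogeny graph over $\QQ$, with the stated breakdown into linear $L_k$, nonlinear two-primary $T_k$, rectangular $R_k$, and the special $S$ graph---is established in Section~6 of \cite{gcal-r}, so I would simply cite that result; it is repeated here only to fix notation for what follows. The remaining two assertions concern the CM case, and for these the starting point is the list of the $13$ rational CM $j$-invariants in the first two columns of Table~\ref{tab-CMgraphs}: these are the $j$-invariants of elliptic curves with CM by an order of class number one in an imaginary quadratic field of discriminant $d_K\in\{-3,-4,-7,-8,-11,-19,-43,-67,-163\}$.

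For the claim that exactly $11$ isogeny-graph types arise for CM curves, I would argue $j$-invariant by $j$-invariant. For a CM elliptic curve $E/\QQ$ the shape of its $\QQ$-isogeny class is determined by the integers $C_p(E)$ of Theorem~\ref{thm-kenku}, and for each prime $p$ the value of $C_p(E)$ is governed by how $p$ behaves relative to the CM order (whether $p$ divides its conductor, ramifies in the CM field, or splits there in a way visible over $\QQ$); imposing in addition the bounds and the case restrictions (1)--(7) of Theorem~\ref{thm-kenku} pins down each $C_p(E)$, and hence the isogeny graph. Running through all $13$ cases yields precisely the graph types listed in the fourth column of Table~\ref{tab-CMgraphs}, namely $L_2(p)$ for $p\in\{2,3,11,19,43,67,163\}$ together with $L_4$, $T_4$, $R_4(6)$, and $R_4(14)$---eleven types in all---and the LMFDB labels in the last column exhibit an elliptic curve realizing each, so all eleven occur. (This is the content of Section~4 of \cite{gcal-r}.)

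For the exclusivity statement I would turn each of the relevant graph types into a forced rational cyclic isogeny and invoke Theorem~\ref{thm-ratnoncusps}. If the isogeny graph of $E$ is of type $L_2(p)$ with $p\in\{19,43,67,163\}$, then $E$ has a rational cyclic $p$-isogeny, hence by Lemma~\ref{Q-rational} a non-cuspidal $\QQ$-rational point on $\operatorname{X}_{0}(p)$; by Theorem~\ref{thm-ratnoncusps} the curve $\operatorname{X}_{0}(19)$ has genus $1$ with finite Mordell--Weil group and $\operatorname{X}_{0}(43)$, $\operatorname{X}_{0}(67)$, $\operatorname{X}_{0}(163)$ have genus $\geq 2$, so there are only finitely many such points and they are all CM points. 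If the graph is of type $L_4$, then $C_3(E)=4$, whence $C(E)=4$ by Theorem~\ref{thm-kenku}(5) and $E$ admits a rational cyclic $27$-isogeny, i.e.\ a non-cuspidal rational point on $\operatorname{X}_{0}(27)$ (genus $1$, finite Mordell--Weil). If the graph is of type $R_4(14)$, then $C_2(E)=C_7(E)=2$ and $C(E)=4$ by Theorem~\ref{thm-kenku}(2), so the rational cyclic subgroups of orders $2$ and $7$ sum to a rational cyclic subgroup of order $14$, giving a non-cuspidal rational point on $\operatorname{X}_{0}(14)$ (genus $1$, finite Mordell--Weil). In each of these cases $\operatorname{X}_{0}(N)$ has only finitely many non-cuspidal rational points, all of them CM points, so the graph type cannot be realized by a non-CM curve.

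The one genuinely non-formal input---and the step I expect to need the most care---is the fact that the finitely many non-cuspidal $\QQ$-rational points on $\operatorname{X}_{0}(N)$ for $N\in\{14,19,27,43,67,163\}$ are all CM points. This is classical, resting on the work of Fricke, Kenku, Mazur, Ogg, and others summarized in the tables of \cite{lozano0}; for $N\in\{43,67,163\}$ it reduces to matching the explicitly known rational points of $\operatorname{X}_{0}(N)$ with the CM $j$-invariant of discriminant $-N$. Everything else is bookkeeping over the $13$ rational CM $j$-invariants together with the graph-shape constraints of Kenku's theorem.
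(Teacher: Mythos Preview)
Your proposal is correct, and in fact goes further than the paper itself: the paper states this theorem purely as background in Section~\ref{sec-background} with no proof, deferring entirely to \cite{gcal-r} (and to Table~\ref{tab-CMgraphs}, which is reproduced from Section~4 of that reference). Your sketch of the CM-specific assertions---running over the thirteen rational CM $j$-invariants for the enumeration of the eleven types, and reducing the exclusivity claim to the known classification of non-cuspidal $\QQ$-points on $\operatorname{X}_0(N)$ for $N\in\{14,19,27,43,67,163\}$---is exactly the argument underlying those cited results, so there is nothing to compare: you have supplied the content that the paper omits by citation.
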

The main theorem in \cite{gcal-r} was the classification of isogeny-torsion graphs that occur over $\QQ$.
\begin{thm}\label{thm-main2} There are $52$ isomorphism types of isogeny-torsion graphs that are associated to $\QQ$-isogeny classes of elliptic curves defined over $\QQ$. In particular, there are $23$ types of $L_{k}$ graphs, $13$ types of $T_{k}$ graphs, $12$ types of $R_{k}$ graphs, and $4$ types of $S$ graphs. Moreover, there are $16$ isomorphism types of isogeny-torsion graphs that are associated to $\QQ$-isogeny classes of elliptic curves over $\QQ$ with complex multiplication (and examples are given in Table \ref{tab-CMgraphs}).
\end{thm}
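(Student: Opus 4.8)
The plan is to construct each isogeny-torsion graph on top of an isogeny graph. By Theorem \ref{thm-mainisogenygraphs} there are exactly $26$ underlying shapes, and an isogeny-torsion graph is by definition one of these shapes together with a labeling of each vertex $E_i$ by its rational torsion subgroup $E_i(\QQ)_\tor$. So the theorem reduces to the combinatorial problem of enumerating, shape by shape, which assignments of Mazur groups (Theorem \ref{thm-mazur}) to the vertices actually occur, counted up to isomorphism of labeled graphs. First I would fix, for each of the $26$ shapes, the set of primes appearing as edge labels and the isogeny class degree.

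The engine of the classification is a prime-by-prime reduction, resting on the following lemma: if $\phi\colon E\to E'$ is a cyclic $\QQ$-isogeny of prime degree $p$, then for every prime $\ell\neq p$ it restricts to an isomorphism $E(\QQ)[\ell^\infty]\xrightarrow{\sim}E'(\QQ)[\ell^\infty]$ on the $\ell$-primary parts of the rational torsion. Indeed, $\phi$ is injective on $\ell$-power torsion because $\#\ker\phi=p$ is coprime to $\ell$; the dual $\hat\phi$ is defined over $\QQ$ with $\hat\phi\circ\phi=[p]$ and $\phi\circ\hat\phi=[p]$, and $[p]$ is an automorphism of $\ell$-power torsion, which forces both $\phi$ and $\hat\phi$ to be bijective on rational $\ell$-primary torsion. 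Consequently the $\ell$-primary torsion is \emph{constant} on any connected subgraph built from edges of degree $\neq\ell$, and can vary only along edges labeled $\ell$. This decouples the analysis into one torsion prime at a time.

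With this in hand I would proceed as follows. Mazur's theorem together with Kenku's theorem (Theorem \ref{thm-kenku}) confines rational torsion to the primes $\{2,3,5,7\}$ and bounds each $C_p(E)$; the latter bounds are already what produce the $26$ shapes. For a torsion prime $\ell$ that labels no edge of a given shape, the lemma makes the $\ell$-part constant across all vertices, so it contributes no variation. For a torsion prime $\ell$ that does label edges, I track how the $\ell$-primary rational torsion grows or shrinks along those edges, constrained by Mazur's list, by the existence of the relevant point on $X_0(\ell)$ (Theorem \ref{thm-ratnoncusps}), and by the uniqueness in Lemma \ref{Q-rational}. For the linear $L_k$ graphs the variation runs along a single chain and is straightforward; for the rectangular $R_k$ graphs and the $S$ graph two primes interact and I recombine the independent primary analyses.

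The richest behavior, and the main obstacle, is the $2$-primary analysis on the nonlinear $T_k$ graphs (and at the branch vertices of the $R_k$ and $S$ graphs), where along a chain of $2$-isogenies the rational $2$-torsion can pass through $\ZZ/2$, $\ZZ/4$, $\ZZ/8$, $\ZZ/2\times\ZZ/2$, $\ZZ/2\times\ZZ/4$, and $\ZZ/2\times\ZZ/8$; pinning down exactly which sequences occur around a branch point requires a local analysis of how the image of the $2$-torsion under each $2$-isogeny and its dual sits inside the neighboring curve, rather than any single clean formula. Having assembled all a priori admissible labelings for the $26$ shapes, I would prove realizability by exhibiting one explicit curve per configuration (the LMFDB labels, e.g.\ those in Table \ref{tab-CMgraphs}, serve this role) and discard the finitely many combinatorially consistent labelings that fail to occur; counting the survivors up to isomorphism gives $23+13+12+4=52$. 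For the CM sub-statement I would restrict to the thirteen rational CM $j$-invariants, each of which already has isogeny shape among $L_2(p)$, $L_4$, $T_4$, $R_4(6)$, $R_4(14)$ by Theorem \ref{thm-mainisogenygraphs}; since quadratic twisting preserves the shape but alters the rational torsion, running the same prime-by-prime analysis over the twist family of each $j$-invariant reproduces exactly the sixteen CM rows of Table \ref{tab-CMgraphs}, witnessed by the curves listed there.
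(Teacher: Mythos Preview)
The paper does not prove this theorem; it is quoted from \cite{gcal-r} as background (see the sentence introducing it: ``The main theorem in \cite{gcal-r} was the classification\ldots''), so there is no proof here to compare against. Your outline is a reasonable high-level sketch of the strategy that paper uses, and your key lemma---that a cyclic $\QQ$-isogeny of prime degree $p$ induces an isomorphism on rational $\ell$-primary torsion for every $\ell\neq p$---is correct and is indeed the organizing principle.

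That said, as a proof your proposal has a genuine gap in the non-realizability direction. You handle existence by pointing to explicit curves, but then write that you would ``discard the finitely many combinatorially consistent labelings that fail to occur'' without saying how. This is where essentially all of the work lives: ruling out a candidate torsion configuration typically requires showing that no rational point exists on some modular curve, or a division-polynomial or twist-family computation specific to that shape, and these arguments are neither uniform nor short. Your acknowledgment that the $2$-primary analysis on $T_k$ graphs is ``the main obstacle'' is accurate, but the sentence ``pinning down exactly which sequences occur \ldots\ requires a local analysis \ldots\ rather than any single clean formula'' is a description of the difficulty, not a resolution of it. The same issue recurs for the CM part: restricting to the thirteen CM $j$-invariants and twisting is the right move, but proving that \emph{only} the sixteen listed configurations arise (e.g.\ that no twist of $j=1728$ yields $([2,2],[4],[4],[4])$) still needs an argument you have not supplied.
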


The $16$ isomorphism types of isogeny-torsion graphs that occur over $\QQ$ with CM are the two isogeny-torsion graphs of type $L_{4}$
\begin{center}
    \begin{tikzcd}
\mathbb{Z} / M \mathbb{Z} \arrow[r, no head, "3"] & \mathbb{Z} / M \mathbb{Z} \arrow[r, no head, "3"] & \mathbb{Z} / M \mathbb{Z} \arrow[r, no head, "3"] & \mathcal{O}
\end{tikzcd}
\end{center}
where $m = 1$ or $3$, the eight isogeny-torsion graphs of type $L_{2}$
\begin{center}
    \begin{tikzcd}
\mathbb{Z} / 2 \mathbb{Z} \arrow[r, no head, "2"] & \mathbb{Z} / 2 \mathbb{Z}
\end{tikzcd}, \begin{tikzcd}
\mathbb{Z} / M \mathbb{Z} \arrow[r, no head, "3"] & \mathcal{O}
\end{tikzcd}, \begin{tikzcd}
\mathcal{O} \arrow[r, no head, "p"] & \mathcal{O}
\end{tikzcd}
\end{center}
where $M = 3$ or $1$, and $p = 11$, $19$, $43$, $67$, or $163$, the three isogeny-torsion graphs of $R_{4}$ type
\begin{center}
    \begin{tikzcd}
\mathbb{Z} / 2 \mathbb{Z} \arrow[rr, no head, "2"] \arrow[d, no head, "7"'] &  & \mathbb{Z} / 2 \mathbb{Z} \arrow[d, no head, "7"] \\
\mathbb{Z} / 2 \mathbb{Z} \arrow[rr, no head, "2"']                &  & \mathbb{Z} / 2 \mathbb{Z}           
\end{tikzcd}, \begin{tikzcd}
\mathbb{Z} / M \mathbb{Z} \arrow[rr, no head, "2"] \arrow[d, no head, "3"'] &  & \mathbb{Z} / M \mathbb{Z} \arrow[d, no head, "3"] \\
\mathbb{Z} / 2 \mathbb{Z} \arrow[rr, no head, "2"']                &  & \mathbb{Z} / 2 \mathbb{Z}           
\end{tikzcd}
\end{center}
where $M = 2$ or $6$, and the three isogeny-torsion graphs of $T_{4}$ type.
\begin{center}
    \begin{tikzcd}
                          & \mathbb{Z} / 2 \mathbb{Z}                                                                                  &                           \\
                          & \mathbb{Z} / 2 \mathbb{Z} \times \mathbb{Z} / 2 \mathbb{Z} \arrow[u, no head, "2"] \arrow[ld, no head, "2"'] \arrow[rd, no head, "2"] &                           \\
\mathbb{Z} / 2 \mathbb{Z} &                                                                                                            & \mathbb{Z} / 2 \mathbb{Z}
\end{tikzcd} \begin{tikzcd}
                          & \mathbb{Z} / 2 \mathbb{Z}                                                                                  &                           \\
                          & \mathbb{Z} / 2 \mathbb{Z} \times \mathbb{Z} / 2 \mathbb{Z} \arrow[u, no head, "2"] \arrow[ld, no head, "2"'] \arrow[rd, no head, "2"] &                           \\
\mathbb{Z} / 4 \mathbb{Z} &                                                                                                            & \mathbb{Z} / 2 \mathbb{Z}
\end{tikzcd} \begin{tikzcd}
                          & \mathbb{Z} / 2 \mathbb{Z}                                                                                  &                           \\
                          & \mathbb{Z} / 2 \mathbb{Z} \times \mathbb{Z} / 2 \mathbb{Z} \arrow[u, no head, "2"] \arrow[ld, no head, "2"'] \arrow[rd, no head, "2"] &                           \\
\mathbb{Z} / 4 \mathbb{Z} &                                                                                                            & \mathbb{Z} / 4 \mathbb{Z}
\end{tikzcd}
\end{center}
We continue with some lemmas and more background that will be used to classify the $2$-adic Galois images attached to isogeny-torsion graphs with CM.

\subsection{Quadratic Twists}

\begin{lemma}\label{group quadratic twists}
Let $N$ be a positive integer such that $\operatorname{GL}(2,\ZZ / N \ZZ)$ has a subgroup $H$ that does not contain $\operatorname{-Id}$. Let $H' = \left\langle \operatorname{-Id}, H \right\rangle$. Then $H' \cong \left\langle \operatorname{-Id} \right\rangle \times H$.
\end{lemma}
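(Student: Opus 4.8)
The plan is to recognize $H'$ as an internal direct product of $H$ and $\langle -\operatorname{Id}\rangle$. First I would dispose of the degenerate cases: if $N=1$ or $N=2$ then $-\operatorname{Id}=\operatorname{Id}$ lies in every subgroup, contradicting the hypothesis that $H$ does not contain $-\operatorname{Id}$. So $N\geq 3$, the element $-\operatorname{Id}$ has order exactly $2$ in $\operatorname{GL}(2,\ZZ/N\ZZ)$, the subgroup $\langle -\operatorname{Id}\rangle=\{\operatorname{Id},-\operatorname{Id}\}$ has order $2$, and since $-\operatorname{Id}\notin H$ we have $\langle -\operatorname{Id}\rangle\cap H=\{\operatorname{Id}\}$.

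The key structural input is that $-\operatorname{Id}$ is a scalar matrix, hence central in $\operatorname{GL}(2,\ZZ/N\ZZ)$ and a fortiori central in $H'$. Centrality gives two things at once. First, $\langle -\operatorname{Id}\rangle$ is normal in $H'$. Second, the set $H\cdot\langle -\operatorname{Id}\rangle = H\cup(-\operatorname{Id})H$ is closed under multiplication and inversion (because $-\operatorname{Id}$ commutes with everything), so it is a subgroup of $\operatorname{GL}(2,\ZZ/N\ZZ)$; as it contains $H$ and $-\operatorname{Id}$ and sits inside $H'$, it equals $H'$. In particular $H$ has index $2$ in $H'$, the two cosets being $H$ and $(-\operatorname{Id})H$ (distinct since $-\operatorname{Id}\notin H$), so $H$ is normal in $H'$.

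Finally I would invoke the standard recognition criterion for internal direct products: $H'$ is generated by the two normal subgroups $H$ and $\langle -\operatorname{Id}\rangle$ with $H\cap\langle -\operatorname{Id}\rangle=\{\operatorname{Id}\}$, so the multiplication map $H\times\langle -\operatorname{Id}\rangle\to H'$ is an isomorphism of groups, yielding $H'\cong\langle -\operatorname{Id}\rangle\times H$.

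I do not expect a real obstacle here; the only point needing a moment of care is ruling out $N\leq 2$ so that $\langle -\operatorname{Id}\rangle$ genuinely has order $2$ and the stated isomorphism is not vacuous. Everything else is the routine internal-direct-product argument, with centrality of the scalar matrix $-\operatorname{Id}$ doing all the work of making both factors normal and forcing the implicit conjugation action to be trivial.
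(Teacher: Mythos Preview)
Your proof is correct and takes essentially the same approach as the paper: both exploit that $-\operatorname{Id}$ is central and that $\langle -\operatorname{Id}\rangle \cap H = \{\operatorname{Id}\}$ to recognize $H'$ as an internal direct product. The paper writes down the multiplication map $\psi(x,h)=xh$ explicitly and checks injectivity by hand, whereas you invoke the standard internal-direct-product criterion; these amount to the same argument. Your explicit disposal of the cases $N\leq 2$ is a nice touch the paper leaves implicit in its hypothesis.
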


\begin{proof}
Note that $H$ is a subgroup of $H'$ of index $2$. Hence, the order of $H'$ and the order of $\left\langle \operatorname{-Id}, H \right\rangle$ are the same. Define
$$\psi \colon \left\langle \operatorname{-Id} \right\rangle \times H \to H'$$
by $\psi(x,h) = xh$. Then $\psi$ is a group homomorphism as $\operatorname{-Id}$ is in the center of $\operatorname{GL}(2,\ZZ/N\ZZ)$. We are done if we prove that $\psi$ is injective. Let $(x,h) \in \left\langle \operatorname{-Id} \right\rangle \times H$ such that $\psi(x,h) = xh = \operatorname{Id}$. Then $h = x^{-1} = x$ as the order of $x$ is equal to $1$ or $2$. As $\operatorname{-Id} \notin H$, $h$ cannot be $\operatorname{-Id}$. Hence, $h = x = \operatorname{Id}$ and so, $\psi$ is injective. \end{proof}

\begin{definition}
Let $G$ and $H$ be subgroups of $\operatorname{GL}(2, \ZZ_{2})$.Then we will say that $G$ and $H$ are quadratic twists if $G$ is the same as $H$, up to multiplication of some elements of $H$ by $\operatorname{-Id}$. Note that if $\left\langle G, \operatorname{-Id} \right\rangle =  \left\langle H, \operatorname{-Id} \right\rangle$, then $H$ and $G$ are quadratic twists.
\end{definition}

\begin{lemma}\label{index of quadratic twists}
Let $N$ be a positive integer, let $H$ be a subgroup of $\operatorname{GL}(2,\Z/N\Z)$, and let $H' = \left\langle H, \operatorname{-Id} \right\rangle$. Let $\chi$ be a character of $H$ of degree two. Then $\chi(H) = H'$ or $\chi(H)$ is a subgroup of $H'$ of index $2$.
\end{lemma}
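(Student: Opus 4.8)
The plan is to exhibit $\chi(H)$ as the image of an explicit homomorphism and then compare orders. Throughout, I identify the character $\chi\colon H\to\{\pm1\}$ (a homomorphism whose values have order dividing $2$) with the homomorphism $H\to\{\operatorname{Id},\operatorname{-Id}\}\subseteq\operatorname{GL}(2,\Z/N\Z)$, $h\mapsto\chi(h)\operatorname{Id}$, so that the quadratic twist is literally $\chi(H)=\{\chi(h)\,h : h\in H\}$. The first step is to check that
$$\phi\colon H\longrightarrow H',\qquad \phi(h)=\chi(h)\,h,$$
is a well-defined group homomorphism: it lands in $H'=\langle H,\operatorname{-Id}\rangle$ since $\chi(h)\in\{\operatorname{Id},\operatorname{-Id}\}\subseteq H'$ and $h\in H\subseteq H'$, and it is multiplicative because $\operatorname{-Id}$ is central in $\operatorname{GL}(2,\Z/N\Z)$ and $\chi$ is multiplicative, so $\phi(h_1)\phi(h_2)=\chi(h_1)\chi(h_2)h_1h_2=\chi(h_1h_2)h_1h_2=\phi(h_1h_2)$. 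By construction $\im\phi=\chi(H)$, which in particular records that $\chi(H)$ is a subgroup of $H'$.

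The second step is to locate $\ker\phi$. If $\phi(h)=\operatorname{Id}$ then $h=\chi(h)^{-1}\operatorname{Id}\in\{\operatorname{Id},\operatorname{-Id}\}$, so $\ker\phi\subseteq\{\operatorname{Id},\operatorname{-Id}\}\cap H$; hence $|\ker\phi|\le 2$, and $|\ker\phi|=2$ forces $\operatorname{-Id}\in H$ together with $\chi(\operatorname{-Id})=-1$. Consequently $|\chi(H)|=|H|/|\ker\phi|$ is either $|H|$ or $|H|/2$.

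The final step is a short case split on whether $\operatorname{-Id}\in H$, the key point being that the two ways of gaining an index-$2$ factor cannot both occur. If $\operatorname{-Id}\notin H$, then $\ker\phi=\{\operatorname{Id}\}$ so $|\chi(H)|=|H|$, while $H'=\langle\operatorname{-Id}\rangle\times H$ by Lemma \ref{group quadratic twists}, so $|H'|=2|H|$ and $\chi(H)$ has index $2$ in $H'$. If $\operatorname{-Id}\in H$, then $H'=H$; if moreover $\chi(\operatorname{-Id})=1$ then $\ker\phi=\{\operatorname{Id}\}$, $|\chi(H)|=|H|=|H'|$, and $\chi(H)=H'$, whereas if $\chi(\operatorname{-Id})=-1$ then $|\ker\phi|=2$, $|\chi(H)|=|H|/2$, and $\chi(H)$ has index $2$ in $H'$. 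In every case $\chi(H)=H'$ or $[H':\chi(H)]=2$, as claimed. I do not expect any genuine obstacle; the only step needing care is the bookkeeping in this last paragraph — noticing that $\ker\phi$ being nontrivial and $H$ being a proper subgroup of $H'$ are mutually exclusive, which is exactly what keeps the index from ever exceeding $2$.
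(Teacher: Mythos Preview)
Your proof is correct. Both your argument and the paper's are elementary order comparisons leaning on Lemma~\ref{group quadratic twists}, but they are organized differently. The paper splits on whether $\operatorname{-Id}\in\chi(H)$: if so, it observes that multiplying by $\operatorname{-Id}$ inside $\chi(H)$ recovers every element of $H$, forcing $\chi(H)\supseteq\langle H,\operatorname{-Id}\rangle=H'$; if not, the same observation applied to $\langle\chi(H),\operatorname{-Id}\rangle$ shows this group equals $H'$, and then Lemma~\ref{group quadratic twists} gives the index-$2$ statement. You instead package the twist as a homomorphism $\phi\colon H\to H'$, compute $\ker\phi\subseteq\{\pm\operatorname{Id}\}\cap H$, and split on whether $\operatorname{-Id}\in H$ (and then on $\chi(\operatorname{-Id})$). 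Your version makes the bookkeeping completely mechanical via the first isomorphism theorem and isolates the key incompatibility---$\ker\phi$ nontrivial and $H\subsetneq H'$ cannot happen simultaneously---very cleanly; the paper's version is shorter and more conceptual (one containment argument handles both cases). Either route is fine for this lemma.
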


\begin{proof}

The character $\chi$ multiplies some elements of $H$ by $\operatorname{-Id}$. If $\operatorname{-Id} \in \chi(H)$, then we can multiply all of the elements of $H$ that $\chi$ multiplied by $\operatorname{-Id}$ by $\operatorname{-Id}$ again, and recoup all of the elements of $H$. Thus, $\chi(H)$ is a subgroup of $H'$ that contains both $\operatorname{-Id}$ and $H$ and so, $\chi(H) = H'$.

On the other hand, let us say that $\operatorname{-Id} \notin \chi(H)$. Let $\chi(H)' = \left\langle \chi(H), \operatorname{-Id} \right\rangle$. By the same argument from before, we can multiply all of the elements of $H$ that $\chi$ multiplied by $\operatorname{-Id}$ by $\operatorname{-Id}$ again, and recoup all of the elements of $H$ in $\chi(H)'$. In other words, $\chi(H)' = \left\langle \operatorname{-Id}, \chi(H) \right\rangle = \left\langle \operatorname{-Id}, H \right\rangle = H'$. By Lemma \ref{group quadratic twists}, $H' = \left\langle \chi(H), \operatorname{-Id} \right\rangle \cong \left\langle \operatorname{-Id} \right\rangle \times \chi(H)$ and $\chi(H)$ is a subgroup of $H'$ of index $2$.
\end{proof}

Let $E : y^{2} = x^{3} + Ax + B$ be an elliptic curve and let $d$ be a non-zero integer. Then the quadratic twist of $E$ by $d$ is the elliptic curve $E^{(d)} : y^{2} = x^{3} + d^{2}Ax + d^{3}B$. Equivalently, $E^{(d)}$ is isomorphic to the elliptic curve defined by $dy^{2} = x^{3} + Ax + B$. Moreover, $E$ is isomorphic to $E^{(d)}$ over $\QQ(\sqrt{d})$ by the map $\phi \colon E \to E^{(d)}$
defined by fixing $\mathcal{O}$ and mapping any non-zero point $(a,b)$ on $E$ to $\left(a,\frac{b}{\sqrt{d}}\right)$.

\begin{corollary}\label{subgroups of index 2 contain -Id}
Let $N$ be a positive integer such that $\operatorname{GL}(2, \Z/N \Z)$ contains a subgroup $H$ such that all subgroups of $H$ of index $2$ contain $\operatorname{-Id}$. Suppose that there exists an elliptic curve $E/\QQ$ such that $\overline{\rho}_{E,N}(G_{\QQ})$ is conjugate to $H$. Let $E^{\chi}$ be a quadratic twist of $E$. Then $\overline{\rho}_{E^{\chi},N}(G_{\Q})$ is conjugate to $H$.
\end{corollary}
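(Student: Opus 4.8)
The plan is to express the mod-$N$ image of the twist $E^{\chi}$ directly in terms of $H$, and then play the hypothesis against Lemma~\ref{index of quadratic twists}. First I would fix notation: write $E^{\chi}=E^{(d)}$ for the quadratic twist attached to the quadratic character $\chi_{d}\colon G_{\QQ}\to\{\pm 1\}$ cutting out $\QQ(\sqrt{d})$. Using the isomorphism $E\cong E^{(d)}$ over $\QQ(\sqrt{d})$ recalled just above the statement, one gets the standard relation $\overline{\rho}_{E^{(d)},N}(\sigma)=\chi_{d}(\sigma)\cdot\overline{\rho}_{E,N}(\sigma)$ for all $\sigma\in G_{\QQ}$ (here $\chi_{d}(\sigma)\in\{\pm 1\}$ acts as a scalar). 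Fixing $A\in\operatorname{GL}(2,\ZZ/N\ZZ)$ with $A\,\overline{\rho}_{E,N}(G_{\QQ})\,A^{-1}=H$, it follows that $\widetilde{H}:=A\,\overline{\rho}_{E^{(d)},N}(G_{\QQ})\,A^{-1}$ is a subgroup of $\operatorname{GL}(2,\ZZ/N\ZZ)$ each of whose elements is $\pm$(an element of $H$) and conversely; that is, $\widetilde{H}$ is a quadratic twist of $H$ in the sense defined above. Consequently $\langle\widetilde{H},\operatorname{-Id}\rangle=\langle H,\operatorname{-Id}\rangle=:H'$, and (by Lemma~\ref{index of quadratic twists} applied to the character of $H$ induced by $\chi_{d}$ when it descends, or directly since $\operatorname{-Id}$ is central of order dividing $2$) $\widetilde{H}$ has index $1$ or $2$ in $H'$.

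The second step is to check that $\operatorname{-Id}\in H$, so that $H'=H$. For $N\le 2$ this is automatic since $\operatorname{-Id}=\operatorname{Id}$. For $N\ge 3$, the properties of the Weil pairing force $\det\colon H\to(\ZZ/N\ZZ)^{\times}$ to be surjective; since $(\ZZ/N\ZZ)^{\times}$ has even order it has a subgroup of index $2$, whose preimage is a subgroup of $H$ of index $2$, and by hypothesis that subgroup contains $\operatorname{-Id}$. Given $H'=H$, the final step is immediate: $\widetilde{H}$ is a subgroup of $H$ of index $1$ or $2$; if the index is $1$ we are done, and if it were $2$ then by hypothesis $\operatorname{-Id}\in\widetilde{H}$, whence $\langle\widetilde{H},\operatorname{-Id}\rangle=\widetilde{H}$ would be a proper subgroup of $H=H'$, contradicting $\langle\widetilde{H},\operatorname{-Id}\rangle=H'$. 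Hence $\widetilde{H}=H$, i.e.\ $\overline{\rho}_{E^{\chi},N}(G_{\QQ})$ is conjugate to $H$.

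The one point that genuinely needs care is the verification that $\operatorname{-Id}\in H$: the hypothesis only controls the index-$2$ subgroups of $H$, so the argument relies on such subgroups actually existing, which is exactly where surjectivity of the determinant (equivalently, the Weil pairing) is used. Everything else is the elementary bookkeeping already packaged in Lemmas~\ref{group quadratic twists} and~\ref{index of quadratic twists}, together with the standard behaviour of Galois representations under quadratic twisting.
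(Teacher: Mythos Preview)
Your argument is correct and follows essentially the same route as the paper's proof: both show that the twist's image $\widetilde{H}$ is a quadratic twist of $H$, appeal to Lemma~\ref{index of quadratic twists} to conclude $[\,H':\widetilde{H}\,]\in\{1,2\}$, and then use the hypothesis on index-$2$ subgroups to force $\widetilde{H}=H$. The one substantive difference is that the paper simply asserts ``the group $H$ contains $\operatorname{-Id}$'' at the outset, whereas you supply an actual justification via surjectivity of the determinant (Weil pairing) to produce an index-$2$ subgroup; your version is therefore slightly more careful on this point.
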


\begin{proof}
The group $H$ contains $\operatorname{-Id}$. By Lemma \ref{index of quadratic twists}, $\overline{\rho}_{E^{\chi},N}(G_{\Q})$ is conjugate to $H$ or is conjugate to a subgroup of $H$ of index $2$. Moreover, $\overline{\rho}_{E^{\chi},N}(G_{\Q})$ is the same as $\overline{\rho}_{E,N}(G_{\Q}) = H$, up to multiplication of some elements of $H$ by $\operatorname{-Id}$. As all subgroups of $H$ of index $2$ contain $\operatorname{-Id}$, we can just multiply the elements of $\overline{\rho}_{E^{\chi},N}(G_{\Q})$ that $\chi$ multiplied by $\operatorname{-Id}$ again by $\operatorname{-Id}$ to recoup all elements of $H$. Hence, $\overline{\rho}_{E^{\chi},N}(G_{\QQ})$ is conjugate to $H$.
\end{proof}

\begin{remark}
Let $N$ be a positive integer such that $\operatorname{GL}(2, \Z/N\Z)$ contains a subgroup $H$ that does not contain $\operatorname{-Id}$. Suppose there is an elliptic curve $E/\QQ$ such that $\overline{\rho}_{E,N}(G_{\QQ})$ is conjugate to $H' = \left\langle H, \operatorname{-Id} \right\rangle$. Then there is a quadratic twist $\chi$ such that $\overline{\rho}_{E^{\chi},N}(G_{\QQ})$ is conjugate to $H$ (see Remark 1.1.3  and Section 10 in \cite{Rouse2021elladicIO}). Conversely, if $\overline{\rho}_{E,N}(G_{\QQ})$ is conjugate to $H$, then there is a non-zero integer $d$ and a quadratic twist $E^{(d)}$ of $E$, such that $\overline{\rho}_{E^{(d)},N}(G_{\QQ})$ is conjugate to $H'$. Note that $\QQ(E[N])$ does not contain $\QQ(\sqrt{d})$.
\end{remark}

\subsection{Galois representations}

\begin{lemma}\label{ell-adic Galois images}

Let $E$ and $E'$ be elliptic curves defined over $\QQ$ that are $\QQ$-isogenous by an isogeny $\phi$ whose kernel is finite, cyclic, and $\QQ$-rational. Let $\ell$ be a prime and let $r$ be the non-negative integer such that $\ell^{r}$ is the greatest power of $\ell$ that divides the order of $\operatorname{Ker}(\phi)$. Let $m$ be a non-negative integer. Then there is a basis $\{P_{\ell^{m+r}}, Q_{\ell^{m+r}}\}$ of $E[\ell^{m+r}]$ such that

\begin{enumerate}
\item if $\sigma$ is a Galois automorphism of $\QQ$, then, there are integers $A$, $B$, $C$, and $D$, where $\overline{\rho}_{E,\ell^{m+r}}(\sigma) = \begin{bmatrix} A & C \\ B & D \end{bmatrix}$ and $\ell^{r}$ divides $C$,
\item $\{\phi([\ell^{r}]P_{\ell^{m+r}}), \phi(Q_{\ell^{m+r}})\}$ is a basis of $E'[\ell^{m}]$,
\item $\overline{\rho}_{E',\ell^{m}}(\sigma) = \begin{bmatrix} A & \frac{C}{\ell^{r}} \\ \ell^{r} \cdot B & D \end{bmatrix}$.
\end{enumerate}

\end{lemma}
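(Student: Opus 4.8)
The plan is to construct the desired basis of $E[\ell^{m+r}]$ directly from the kernel of $\phi$, using the cyclicity and $\QQ$-rationality of $\Ker(\phi)$. First I would write $\Ker(\phi) = C$, a finite cyclic $\QQ$-rational subgroup of $E$, and let $C_\ell$ denote its $\ell$-primary part, so $|C_\ell| = \ell^r$. Since $C$ is $\QQ$-rational, so is $C_\ell$ (it is the image of $C$ under a power map, hence Galois-stable), and the $\QQ$-rational quotient isogeny $\phi$ factors as $E \to E/C_\ell' \to E'$ where $C_\ell'$ is the prime-to-$\ell$ part; the point is that $E \to E/C_\ell$ and $E' = E/C$ have the same $\ell$-primary torsion behaviour, so we may as well compute with $\phi$ restricted to $\ell$-power torsion. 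Concretely, $\phi$ maps $E[\ell^{m+r}]$ onto $E'[\ell^m]$: indeed $\phi(E[\ell^{m+r}])$ is killed by $\ell^m$ because $\Ker(\phi)\cap E[\ell^{m+r}]$ contains $C_\ell$ of order $\ell^r$, and a cardinality/surjectivity count (the restriction of $\phi$ to $E[\ell^{m+r}]$ has kernel exactly $C_\ell$, so the image has order $\ell^{2(m+r)}/\ell^r$... wait, that overshoots, so in fact $\phi(E[\ell^{m+r}]) = E'[\ell^m]$ forces $\Ker(\phi|_{E[\ell^{m+r}]}) = E[\ell^{m+r}]\cap \Ker\phi$ to have order $\ell^{2r}$, which it does since that intersection is $(\ell^{-r}C_\ell\text{-preimage})$ — I will spell this out carefully) shows the image is all of $E'[\ell^m]$.

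The core construction: choose a generator $T$ of $C_\ell$, and extend $\{T\}$ to a basis. Pick $P_{\ell^{m+r}} \in E[\ell^{m+r}]$ with $[\ell^m]P_{\ell^{m+r}} = T$ — possible since $T$ has order $\ell^r$ and $E[\ell^{m+r}]$ surjects onto $E[\ell^r]$ via $[\ell^m]$ — and then choose $Q_{\ell^{m+r}}$ so that $\{P_{\ell^{m+r}}, Q_{\ell^{m+r}}\}$ is a basis of $E[\ell^{m+r}]$ with the extra property that $\phi(Q_{\ell^{m+r}})$ has order exactly $\ell^m$ (this can be arranged because $\phi(E[\ell^{m+r}]) = E'[\ell^m]$ and the kernel of $\phi$ on this group is generated by $P_{\ell^{m+r}}$-direction vectors; choosing $Q$ complementary to $C_\ell$ inside $E[\ell^{m+r}]$ in the appropriate sense does it). For (1): since $\Ker\phi$, and hence the cyclic group generated by $P_{\ell^{m+r}}$ modulo the subgroup killed by $\phi$... more precisely the line $\langle T \rangle = \langle [\ell^m]P_{\ell^{m+r}}\rangle$ is $\QQ$-rational, so for $\sigma \in G_\QQ$ we have $\sigma(T) \in \langle T\rangle$, i.e. writing $\overline\rho_{E,\ell^{m+r}}(\sigma) = \begin{bmatrix} A & C \\ B & D\end{bmatrix}$ in the basis $\{P,Q\}$, applying $[\ell^m]$ kills everything except the $E[\ell^r]$ part and the condition $\sigma(T) \in \langle T \rangle$ becomes $\ell^r \mid C$ (the $(1,2)$-entry, governing how $\sigma(P)$ picks up a $Q$-component, must vanish modulo $\ell^r$ for $\sigma([\ell^m]P)$ to stay in $\langle [\ell^m]P\rangle$). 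For (2): $[\ell^r]P_{\ell^{m+r}}$ and $Q_{\ell^{m+r}}$ both land in $E'[\ell^m]$ under $\phi$, they generate the image, and a quick order count (both have order $\ell^m$ in the image, and they are independent because their preimages are independent modulo $\Ker\phi$) gives a basis. For (3): just compute $\overline\rho_{E',\ell^m}(\sigma)$ in the basis $\{\phi([\ell^r]P_{\ell^{m+r}}), \phi(Q_{\ell^{m+r}})\}$ from $\overline\rho_{E,\ell^{m+r}}(\sigma)$ in $\{P,Q\}$: intertwining via $\phi$ and the identifications $\phi([\ell^r]P)\leftrightarrow P$, $\phi(Q)\leftrightarrow Q$ transports $\begin{bmatrix}A & C\\ B& D\end{bmatrix}$ to $\begin{bmatrix} A & C/\ell^r \\ \ell^r B & D\end{bmatrix}$ — the $\ell^r$ appears on the off-diagonal because $\phi$ scales $P$ by $\ell^r$ but not $Q$, so in passing $\sigma$ through $\phi$ a factor of $\ell^r$ enters the $(2,1)$-slot and leaves the $(1,2)$-slot. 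That $C/\ell^r$ is an integer is exactly part (1).

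I expect the main obstacle to be the bookkeeping in choosing $Q_{\ell^{m+r}}$ so that simultaneously (a) $\{P_{\ell^{m+r}}, Q_{\ell^{m+r}}\}$ is a genuine basis of the full $E[\ell^{m+r}]$ and (b) $\phi(Q_{\ell^{m+r}})$ has full order $\ell^m$ in $E'[\ell^m]$ so that $\{\phi([\ell^r]P), \phi(Q)\}$ is actually a basis and not merely a generating set with a relation. The cleanest route is probably to work with the subgroup $H \subseteq E[\ell^{m+r}]$ that is the full preimage under $[\ell^m]$ of $C_\ell = \langle T\rangle$; then $H$ is free of rank... it has order $\ell^{m}\cdot \ell^{m+r}/\ell^{m+r-r} $ — again I will nail down the exact structure — and $P_{\ell^{m+r}} \in H$, while one checks $\Ker(\phi|_{E[\ell^{m+r}]})$ has the form needed so that any $Q$ completing $P$ to a basis automatically has $\phi(Q)$ of full order $\ell^m$. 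Alternatively, and perhaps more transparently, one reduces to the case where $\Ker(\phi)$ is a cyclic $\ell$-group of order exactly $\ell^r$ (the prime-to-$\ell$ part does not affect $\ell$-power torsion and can be composed away, noting $E' = E/C$ is isomorphic over $\QQ$ to $(E/C_\ell')/C_\ell$ with an isogeny differing from $\phi$ by an $\ell$-power-degree-free isogeny which induces an isomorphism on $\ell^\infty$-torsion after the identification) and then the computation is the standard one for a single cyclic isogeny of $\ell$-power degree. Everything else is a direct, if slightly fiddly, matrix computation.
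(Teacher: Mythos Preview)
Your overall strategy is exactly the paper's: lift a generator of the $\ell$-primary part of $\Ker(\phi)$ to a point of order $\ell^{m+r}$, complete to a basis, and compute. But you have swapped the roles of $P$ and $Q$, and this is not harmless because the statement fixes which basis vector gets multiplied by $[\ell^r]$ before applying $\phi$.

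Concretely: you set $T = [\ell^m]P_{\ell^{m+r}}$ to generate $C_\ell \subseteq \Ker(\phi)$. With that choice, $\phi([\ell^r]P_{\ell^{m+r}})$ does \emph{not} have order $\ell^m$. Indeed $\langle P_{\ell^{m+r}}\rangle \cap \Ker(\phi) = \langle T\rangle$ has order $\ell^r$, so the image of $[\ell^r]P_{\ell^{m+r}}$ under $\phi$ has order $\ell^{\max(0,m-r)}$, and for $r\ge 1$ the pair $\{\phi([\ell^r]P_{\ell^{m+r}}),\phi(Q_{\ell^{m+r}})\}$ cannot be a basis of $E'[\ell^m]$. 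The same swap causes the confusion in part~(1): in the paper's column convention $\sigma(P)=[A]P+[B]Q$ and $\sigma(Q)=[C]P+[D]Q$, so $C$ is the $P$-coefficient of $\sigma(Q)$, not ``how $\sigma(P)$ picks up a $Q$-component''. The $\QQ$-rationality of $\langle T\rangle$ with your $T=[\ell^m]P$ would force $\ell^r\mid B$, not $\ell^r\mid C$.

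The fix is to do what the paper does: take $Q_{\ell^r}\in\Ker(\phi)$ of order $\ell^r$, lift it to $Q_{\ell^{m+r}}$ with $[\ell^m]Q_{\ell^{m+r}}=Q_{\ell^r}$, and let $P_{\ell^{m+r}}$ be \emph{any} completion to a basis. Then $\langle P_{\ell^{m+r}}\rangle\cap\Ker(\phi)=\{\mathcal{O}\}$ automatically, so $\phi([\ell^r]P_{\ell^{m+r}})$ has order $\ell^m$; and $[\ell^m]\phi(Q_{\ell^{m+r}})=\phi(Q_{\ell^r})=\mathcal{O}$ while $[\ell^{m-1}]Q_{\ell^{m+r}}\notin\langle Q_{\ell^r}\rangle$, so $\phi(Q_{\ell^{m+r}})$ also has order $\ell^m$. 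Independence is immediate. In particular, the ``main obstacle'' you anticipate---arranging that $\phi(Q)$ has full order---evaporates once the roles are assigned correctly: no special care is needed in choosing the complementary basis vector. Your reduction to the case where $\Ker(\phi)$ is an $\ell$-group is fine but unnecessary; the paper just uses that the $\ell$-primary part of a cyclic $\QQ$-rational group is again cyclic and $\QQ$-rational.
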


\begin{proof}
We break up the proof into steps

\begin{enumerate}
\item Let $Q_{\ell^{r}}$ be an element of $\operatorname{Ker}(\phi)$ of order $\ell^{r}$ and let $Q_{\ell^{m+r}}$ be a point on $E$ such that $[\ell^{m}]Q_{\ell^{m+r}} = Q_{\ell^{r}}$. Let $P_{\ell^{m+r}}$ be a point on $E$ such that $E[\ell^{m+r}] = \left\langle P_{\ell^{m+r}}, Q_{\ell^{m+r}} \right\rangle$. Let $\sigma$ be a Galois automorphism of $\QQ$. Then there are integers $A$ and $B$ such that $\sigma(P_{\ell^{m+r}}) = [A]P_{\ell^{m+r}} + [B]Q_{\ell^{m+r}}$ and there are integers $C$ and $D$ such that $\sigma(Q_{\ell^{m+r}}) = [C]P_{\ell^{m+r}} + [D]Q_{\ell^{m+r}}$. Then
$$\sigma(Q_{\ell^{r}}) = \sigma([\ell^{m}]Q_{\ell^{m+r}}) = [\ell^{m}]\sigma(Q_{\ell^{m+r}}) = [\ell^{m}]([C]P_{\ell^{m+r}}+[D]Q_{\ell^{m+r}}) = [\ell^{m}C]P_{\ell^{m+r}} + [D]Q_{\ell^{r}}.$$
As $Q_{\ell^{r}}$ generates a $\QQ$-rational group, $\sigma(Q_{\ell^{r}}) = [\ell^{m}C]P_{\ell^{m+r}} + [D]Q_{\ell^{r}} \in \left\langle Q_{\ell^{r}} \right\rangle \subseteq \left\langle Q_{\ell^{m+r}} \right\rangle$. Thus, $[\ell^{m} C]P_{\ell^{m+r}} \in \left\langle Q_{\ell^{m+r}} \right\rangle$. As $\left\langle P_{\ell^{m+r}} \right\rangle \bigcap \left\langle Q_{\ell^{m+r}} \right\rangle = \{\mathcal{O}\}$, we have that $[\ell^{m}C]P_{\ell^{m+r}} = \mathcal{O}$. Thus, $\ell^{m+r}$ divides $\ell^{m}C$ and hence, $\ell^{r}$ divides $C$.

\item We claim that $E'[\ell^{m}] = \left\langle \phi([\ell^{r}]P_{\ell^{m+r}}), \phi(Q_{\ell^{m+r}}) \right\rangle$. We claim that the order of $\phi([\ell^{r}]P_{\ell^{m+r}})$ and the order of $\phi(Q_{\ell^{m+r}})$ are both equal to $\ell^{m}$. Note that $[\ell^{m}]\phi([\ell^{r}]P_{\ell^{m+r}}) = \phi([\ell^{m+r}]P_{\ell^{m+r}}) = \mathcal{O}$. Next, $[\ell^{m}]\phi(Q_{\ell^{m+r}}) = \phi([\ell^{m}]Q_{\ell^{m+r}}) = \phi(Q_{\ell^{r}}) = \mathcal{O}$. If $m = 0$, then we can move on. If $m$ is positive, then $m-1$ is a non-negative integer and
$$[\ell^{m-1}] \cdot \phi([\ell^{r}]P_{\ell^{m+r}}) = \phi([\ell^{m+r-1}]P_{\ell^{m+r}}).$$
If we claim that $\phi([\ell^{m+r-1}]P_{\ell^{m+r}}) = \mathcal{O}$, then $[\ell^{m+r-1}]P_{\ell^{m+r}} \in \left\langle Q_{\ell^{r}} \right\rangle \subseteq \left\langle Q_{\ell^{m+r}} \right\rangle$. The point $[\ell^{m+r-1}]P_{\ell^{m+r}}$ generates the subgroup of $\left\langle P_{\ell^{m+r}} \right\rangle$ of order $\ell$ and so cannot be contained in $\left\langle Q_{\ell^{m+r}} \right\rangle$ and so we arrive at a contradiction. Next,
$$[\ell^{m-1}] \cdot \phi(Q_{\ell^{m+r}}) = \phi([\ell^{m-1}]Q_{\ell^{m+r}}).$$
If we claim that $\phi([\ell^{m-1}]Q_{\ell^{m+r}}) = \mathcal{O}$, then $[\ell^{m-1}]Q_{\ell^{m+r}} \in \left\langle Q_{\ell^{r}} \right\rangle$ but this is 
a contradiction as the order of $[\ell^{m-1}]Q_{\ell^{m+r}}$ is equal to $r+1$ and 
the order of $Q_{\ell^{r}}$ is equal to $\ell^{r}$.

Now we will prove that $\left\langle \phi([\ell^{r}]P_{\ell^{m+r}}) \right\rangle \bigcap \left\langle \phi(Q_{\ell^{m+r}}) \right\rangle = \left\{\mathcal{O} \right\}$. Now let us say that there are integers $\alpha$ and $\beta$ such that $[\alpha]\phi([\ell^{r}]P_{\ell^{m+r}}) = [\beta]\phi(Q_{\ell^{m+r}})$. Then $\phi([\alpha\ell^{r}]P_{\ell^{m+r}}) = \phi([\beta]Q_{\ell^{m+r}})$. Hence, $[\alpha \ell^{r}]P_{\ell^{m+r}} - [\beta]Q_{\ell^{m+r}} \in \left\langle Q_{\ell^{r}} \right\rangle \subseteq \left\langle Q_{\ell^{m+r}} \right\rangle$ and hence, $[\alpha \ell^{r}]P_{\ell^{m+r}} \in \left\langle Q_{\ell^{m+r}} \right\rangle$. Thus, $[\alpha \ell^{r}]P_{\ell^{m+r}} = \mathcal{O}$ and hence, $[\alpha]\phi([\ell^{m}]P_{\ell^{m+r}}) = \phi([\alpha \ell^{m}]P_{\ell^{m+r}}) = \mathcal{O}$. This means that $\left\langle \phi([\ell^{r}]P_{\ell^{m+r}}) \right\rangle \bigcap \left\langle \phi(Q_{\ell^{m+r}}) \right\rangle = \{\mathcal{O}\}$.

\item Next, we see that
$$\sigma(\phi([\ell^{r}]P_{\ell^{m+r}})) = \phi([\ell^{r}]\sigma(P_{\ell^{m+r}})) = \phi([\ell^{r}]([A]P_{\ell^{m+r}}+[B]Q_{\ell^{m+r}})) = [A]\phi([\ell^{r}]P_{\ell^{m+r}})+[\ell^{r} \cdot B]\phi(Q_{\ell^{m+r}}).$$
Finally, we see that
$$\sigma(\phi(Q_{\ell^{m+r}})) = \phi(\sigma(Q_{\ell^{m+r}})) = \phi([C]P_{\ell^{m+r}}+[D]Q_{\ell^{m+r}})$$
$$= \phi\left(\left[\frac{C}{\ell^{r}}\right][\ell^{r}]P_{\ell^{m+r}}+[D]Q_{\ell^{m+r}}\right) = \left[\frac{C}{\ell^{r}}\right]\phi([\ell^{r}]P_{\ell^{m+r}})+[D]\phi(Q_{\ell^{m+r}}).$$
\end{enumerate}
\end{proof}

\begin{remark}
Let $E$ and $E'$ be elliptic curves defined over $\QQ$. Let $\phi \colon E \to E'$ be a $\QQ$-isogeny with a finite, cyclic, $\QQ$-rational kernel. Let $\ell^{r}$ be a greatest power of $\ell$ that divides the order of $\operatorname{Ker}(\phi)$. Let $m$ be a non-negative integer. Given $\overline{\rho}_{E,\ell^{m+r}}(G_{\QQ})$, we may use Lemma \ref{ell-adic Galois images} to compute $\overline{\rho}_{E',\ell^{m}}(G_{\QQ})$. Therefore, $\rho_{E',\ell^{\infty}}(G_{\QQ})$ is determined by $\rho_{E,\ell^{\infty}}(G_{\QQ})$ (and vice versa).

\end{remark}

\begin{corollary}\label{coprime isogeny-degree}

Let $E$ and $E'$ be elliptic curves defined over $\QQ$ and let $\ell$ be a prime number. Suppose that $E$ is $\QQ$-isogenous to $E'$ by an isogeny that is defined over $\QQ$ with a finite, cyclic kernel of degree not divisible by $\ell$. Then $\rho_{E,\ell^{\infty}}(G_{\QQ})$ is conjugate to $\rho_{E',\ell^{\infty}}(G_{\QQ})$.

\end{corollary}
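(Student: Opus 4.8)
The plan is to deduce Corollary~\ref{coprime isogeny-degree} directly from Lemma~\ref{ell-adic Galois images}, using the hypothesis that $\ell$ does not divide $\deg(\phi)$ to force $r=0$. First I would unpack the setup: let $\phi\colon E\to E'$ be the given $\QQ$-isogeny with finite, cyclic, $\QQ$-rational kernel of degree $d$ coprime to $\ell$. By definition of $r$ in Lemma~\ref{ell-adic Galois images}, $\ell^{r}$ is the largest power of $\ell$ dividing $d=\#\Ker(\phi)$, so the coprimality hypothesis gives $r=0$. Then, for every non-negative integer $m$, Lemma~\ref{ell-adic Galois images} (applied with this $r=0$) produces a basis $\{P_{\ell^{m}},Q_{\ell^{m}}\}$ of $E[\ell^{m}]$ such that $\{\phi(P_{\ell^{m}}),\phi(Q_{\ell^{m}})\}$ is a basis of $E'[\ell^{m}]$, and such that in these bases $\overline{\rho}_{E',\ell^{m}}(\sigma)=\overline{\rho}_{E,\ell^{m}}(\sigma)$ for all $\sigma\in G_{\QQ}$ (since with $r=0$ the matrix $\left[\begin{smallmatrix} A & C/\ell^{r} \\ \ell^{r} B & D\end{smallmatrix}\right]$ is literally $\left[\begin{smallmatrix} A & C \\ B & D\end{smallmatrix}\right]$).

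Next I would observe that $\phi$ restricted to $E[\ell^{m}]$ is injective: its kernel is contained in $\Ker(\phi)$, which has order coprime to $\ell$, so $\Ker(\phi)\cap E[\ell^{m}]=\{\mathcal{O}\}$. Hence $\phi|_{E[\ell^{m}]}\colon E[\ell^{m}]\to E'[\ell^{m}]$ is an isomorphism of $G_{\QQ}$-modules (it is Galois-equivariant because $\phi$ is defined over $\QQ$), and it carries the chosen basis of $E[\ell^{m}]$ to the chosen basis of $E'[\ell^{m}]$. This is precisely the content that makes $\overline{\rho}_{E,\ell^{m}}$ and $\overline{\rho}_{E',\ell^{m}}$ conjugate in $\GL(2,\ZZ/\ell^{m}\ZZ)$ — indeed, in the matched bases they are equal, so in arbitrary bases they differ by the change-of-basis matrix relating the two. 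Thus $\overline{\rho}_{E,\ell^{m}}(G_{\QQ})$ is conjugate to $\overline{\rho}_{E',\ell^{m}}(G_{\QQ})$ for every $m$.

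Finally, to upgrade from all finite levels to the $\ell$-adic level, I would take the inverse limit: $\rho_{E,\ell^{\infty}}(G_{\QQ})=\varprojlim_{m}\overline{\rho}_{E,\ell^{m}}(G_{\QQ})$ and similarly for $E'$. The isomorphisms $\phi|_{E[\ell^{m}]}$ are compatible with the multiplication-by-$\ell$ maps $E[\ell^{m+1}]\to E[\ell^{m}]$, so they assemble into an isomorphism of $\ZZ_{\ell}$-modules $T_{\ell}E\to T_{\ell}E'$ that is $G_{\QQ}$-equivariant; equivalently, one can choose the bases at each level compatibly (the bases built in Lemma~\ref{ell-adic Galois images} from a tower $Q_{\ell^{m}}$ of points with $[\ell]Q_{\ell^{m+1}}=Q_{\ell^{m}}$ can be taken compatibly, and likewise for the $P$'s). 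Passing to the limit gives a single element $g\in\GL(2,\ZZ_{\ell})$ conjugating $\rho_{E,\ell^{\infty}}(G_{\QQ})$ onto $\rho_{E',\ell^{\infty}}(G_{\QQ})$.

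The only mild subtlety — the step I would be most careful about — is the compatibility of the chosen bases across levels, i.e.\ making sure the conjugating matrices $g_{m}\in\GL(2,\ZZ/\ell^{m}\ZZ)$ can be chosen to form a compatible system whose limit lies in $\GL(2,\ZZ_{\ell})$. Since $r=0$ kills all the delicate "dividing by $\ell^{r}$" behaviour in Lemma~\ref{ell-adic Galois images}, this reduces to the standard fact that a $G_{\QQ}$-equivariant isomorphism of Tate modules induces conjugacy of the $\ell$-adic images; alternatively one invokes a compactness argument, choosing $g_{m}$ at each level and extracting a convergent subsequence in the (profinite, hence compact) group $\GL(2,\ZZ_{\ell})$, whose limit works at every finite level simultaneously. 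Either way the argument is routine once $r=0$ is in hand, which is the whole point of the coprimality hypothesis.
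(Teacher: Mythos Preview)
Your proof is correct and follows essentially the same approach as the paper: apply Lemma~\ref{ell-adic Galois images} with $r=0$. The paper's own proof is simply the one-line ``Use Lemma~\ref{ell-adic Galois images} with $r=0$,'' so your additional care about compatibility across levels and the passage to the inverse limit is extra detail rather than a different route.
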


\begin{proof}

Use Lemma \ref{ell-adic Galois images} with $r = 0$.

\end{proof}

\begin{corollary}\label{contains scalars}
 
Let $\ell$ be a prime and let $E$ and $E'$ be elliptic curves defined over $\QQ$. Suppose that $E$ is $\QQ$-isogenous to $E'$ by an isogeny $\phi$ with a finite, cyclic, $\QQ$-rational kernel. Let $\alpha$ be an integer that is not divisible by $\ell$. Then $\rho_{E,\ell^{\infty}}(G_{\QQ})$ contains $s_{\alpha} = \begin{bmatrix} \alpha & 0 \\ 0 & \alpha \end{bmatrix}$ if and only if $\rho_{E',\ell^{\infty}}(G_{\QQ})$ contains $s_{\alpha}$.
    
\end{corollary}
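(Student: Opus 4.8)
The proof will be a short consequence of Lemma \ref{ell-adic Galois images}, turning on one observation: a scalar matrix $s_\alpha$ is central in $\operatorname{GL}(2,\ZZ_\ell)$, hence fixed by every change-of-basis conjugation, so the fact that the bases produced by Lemma \ref{ell-adic Galois images} are adapted to $\phi$ (and not to $s_\alpha$) is irrelevant. First I would note the hypotheses are in place: since $\ell \nmid \alpha$, the matrix $s_\alpha$ lies in $\operatorname{GL}(2,\ZZ_\ell)$, so both conditions make sense; and the property ``$\rho_{E,\ell^{\infty}}(G_{\QQ})$ contains $s_\alpha$'' is independent of the chosen compatible basis of $T_\ell(E)$, again because $s_\alpha$ is central. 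Let $r$ be the non-negative integer with $\ell^r$ the exact power of $\ell$ dividing $|\operatorname{Ker}(\phi)|$, as in Lemma \ref{ell-adic Galois images}.

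\textbf{Forward implication.} Suppose $s_\alpha \in \rho_{E,\ell^{\infty}}(G_{\QQ})$, say $\rho_{E,\ell^{\infty}}(\sigma) = s_\alpha$ for some $\sigma \in G_{\QQ}$; equivalently, $\sigma$ acts on $E[\ell^{n}]$ as the multiplication-by-$\alpha$ map for every $n$. Fix $m \geq 0$ and apply Lemma \ref{ell-adic Galois images} to $\phi$ with this value of $m$, obtaining a basis $\{P_{\ell^{m+r}}, Q_{\ell^{m+r}}\}$ of $E[\ell^{m+r}]$ and the associated basis of $E'[\ell^{m}]$. In the basis $\{P_{\ell^{m+r}}, Q_{\ell^{m+r}}\}$ the automorphism $\sigma$ is still represented by $s_\alpha$, so in the notation of that lemma we may take $A = D = \alpha$ and $B = C = 0$, and the divisibility $\ell^{r} \mid C$ holds trivially. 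Part (3) of Lemma \ref{ell-adic Galois images} then gives $\overline{\rho}_{E',\ell^{m}}(\sigma) = \left[\begin{smallmatrix} \alpha & 0 \\ 0 & \alpha \end{smallmatrix}\right]$, i.e.\ $s_\alpha \bmod \ell^{m}$. Since $m$ was arbitrary, $\rho_{E',\ell^{\infty}}(\sigma) = s_\alpha$, so $s_\alpha \in \rho_{E',\ell^{\infty}}(G_{\QQ})$.

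\textbf{Converse.} Here I would invoke symmetry. The dual isogeny $\hat{\phi}\colon E' \to E$ is again defined over $\QQ$ and has finite, cyclic, $\QQ$-rational kernel: writing $d = \deg\phi$ and choosing a generator $P$ of $\operatorname{Ker}(\phi)$, which is a primitive element of $E[d]$, extend it to a basis $\{P,Q\}$ of $E[d]$, so that $\operatorname{Ker}(\hat\phi) = \phi(E[d]) = \langle \phi(Q)\rangle$ is cyclic (and $\QQ$-rational since $\hat\phi$ is defined over $\QQ$). Applying the forward implication to $\hat\phi$ gives the reverse containment. (Alternatively, one may quote the remark after Lemma \ref{ell-adic Galois images}, which records that $\rho_{E,\ell^{\infty}}(G_{\QQ})$ and $\rho_{E',\ell^{\infty}}(G_{\QQ})$ determine one another.)

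\textbf{On the main difficulty.} I do not expect a genuine obstacle: the only point that could look problematic is that the bases of Lemma \ref{ell-adic Galois images} are not chosen to interact with $s_\alpha$, and this is exactly resolved by the centrality of $s_\alpha$. For readers preferring to avoid Lemma \ref{ell-adic Galois images}, there is an equally short direct argument: given $R' \in E'[\ell^{m}]$, lift it through the surjection $\phi$ to an $\ell$-power-torsion point $R$ of $E$ (any lift is torsion of order dividing $\ell^{m}\cdot|\operatorname{Ker}(\phi)|$, and after subtracting its prime-to-$\ell$ component one may take $R \in E[\ell^{m+r}]$), and then compute $\sigma(R') = \sigma(\phi(R)) = \phi(\sigma(R)) = \phi([\alpha]R) = [\alpha]\phi(R) = [\alpha]R'$, using that $\phi$ is defined over $\QQ$ and is a group homomorphism.
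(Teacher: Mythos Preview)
Your proof is correct and follows essentially the same approach as the paper: both argue that $s_\alpha$ is central so the basis choice in Lemma \ref{ell-adic Galois images} is irrelevant, apply that lemma to push $s_\alpha$ across $\phi$, and then invoke the dual isogeny for the converse. You supply a bit more detail than the paper (justifying that $\operatorname{Ker}(\hat\phi)$ is cyclic, and offering the alternative direct computation via lifting through $\phi$), but the core argument is the same.
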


\begin{proof}

Suppose that $\rho_{E,\ell^{\infty}}(G_{\QQ})$ contains $s_{\alpha}$. Let $r$ be the non-negative integer such that $\ell^{r}$ is the greatest power of $\ell$ that divides $\operatorname{Ker}(\phi)$ and let $m$ be a non-negative integer. Then $\overline{\rho}_{E,\ell^{m+r}}(G_{\QQ})$ contains $s_{\alpha}$. As $s_{\alpha}$ is in the center of $\operatorname{GL}(2, \ZZ / \ell^{m+r} \ZZ)$, it does not matter what basis we use for $E[\ell^{m+r}]$. By Lemma \ref{ell-adic Galois images}, $s_{\alpha}$ is an element of $\overline{\rho}_{E',\ell^{m}}(G_{\QQ})$. The converse is proved simply by switching the roles of $E$ and $E'$ and using the dual of $\phi$.
\end{proof}

\begin{corollary}\label{contains -Id}
 
Let $\ell$ be a prime and let $E$ and $E'$ be elliptic curves defined over $\QQ$. Suppose that $E$ is $\QQ$-isogenous to $E'$ by an isogeny $\phi$ with a finite, cyclic, $\QQ$-rational kernel. Then $\rho_{E,\ell^{\infty}}(G_{\QQ})$ contains $\operatorname{-Id}$ if and only if $\rho_{E',\ell^{\infty}}(G_{\QQ})$ contains $\operatorname{-Id}$.
    
\end{corollary}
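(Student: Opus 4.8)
The plan is to derive this as the immediate specialization $\alpha = -1$ of Corollary~\ref{contains scalars}. The only point to check is that the hypothesis of that corollary is satisfied, namely that $-1$ is an integer not divisible by $\ell$; this holds for every prime $\ell$ since $-1$ is a unit in $\ZZ$. Applying Corollary~\ref{contains scalars} with $\alpha = -1$ then yields that $\rho_{E,\ell^{\infty}}(G_{\QQ})$ contains $s_{-1} = \begin{bmatrix} -1 & 0 \\ 0 & -1 \end{bmatrix}$ if and only if $\rho_{E',\ell^{\infty}}(G_{\QQ})$ does, and since $s_{-1} = \operatorname{-Id}$ this is exactly the assertion.

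There is no genuine obstacle here: all of the substance lives in Lemma~\ref{ell-adic Galois images} together with the observation (already used in the proof of Corollary~\ref{contains scalars}) that $\operatorname{-Id}$ lies in the center of $\operatorname{GL}(2,\ZZ/\ell^{m+r}\ZZ)$, so that the change of basis between $E[\ell^{m+r}]$ and $E'[\ell^{m}]$ produced by that lemma is irrelevant to whether $\operatorname{-Id}$ appears in the image. If a self-contained argument is preferred, one simply repeats the proof of Corollary~\ref{contains scalars} with $\alpha$ replaced by $-1$: assuming $\operatorname{-Id} \in \rho_{E,\ell^{\infty}}(G_{\QQ})$, for every non-negative integer $m$ the matrix $\operatorname{-Id}$ lies in $\overline{\rho}_{E,\ell^{m+r}}(G_{\QQ})$, hence by Lemma~\ref{ell-adic Galois images} and centrality also in $\overline{\rho}_{E',\ell^{m}}(G_{\QQ})$, so $\operatorname{-Id} \in \rho_{E',\ell^{\infty}}(G_{\QQ})$; the reverse implication follows by symmetry, using the dual isogeny $\hat\phi$ of $\phi$.
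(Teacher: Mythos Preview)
Your proposal is correct and follows exactly the paper's approach: the paper's proof is the single line ``Use Corollary~\ref{contains scalars} with $\alpha = -1$,'' which is precisely what you do. Your additional remarks about centrality and the dual isogeny simply unpack that corollary and are accurate but not needed.
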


\begin{proof}

Use Corollary \ref{contains scalars} with $\alpha = -1$.

\end{proof}

\begin{lemma}[Generalized Hensel's Lemma]\label{Hensel}

Let $p$ be a prime and let $f(x)$ be a polynomial with integer coefficients. Suppose that $f(a) \equiv 0 (\mod p^{j})$, $p^{\tau} \mid \mid f'(a)$, and that $j \geq 2\tau + 1$. Then there is a unique $t$ (modulo $p$) such that $f(a+tp^{j-\tau}) \equiv 0 (\mod p^{j+1})$.

\end{lemma}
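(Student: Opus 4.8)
The plan is to expand $f$ in a (finite) Taylor series about $a$ and observe that, modulo $p^{j+1}$, only the constant and linear terms survive. Since $f$ has integer coefficients, the binomial theorem gives $f(a+h)=\sum_{k\ge 0} c_k h^k$ with $c_k=\sum_{n\ge k}\binom{n}{k}(\text{coeff of }x^n\text{ in }f)\,a^{n-k}\in\ZZ$; that is, the \emph{divided derivatives} $c_k=f^{(k)}(a)/k!$ are integers, and in particular $c_0=f(a)$ and $c_1=f'(a)$. Substituting $h=t\,p^{j-\tau}$, the term of index $k$ becomes $c_k\,t^k\,p^{k(j-\tau)}$.

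First I would check that every term with $k\ge 2$ vanishes modulo $p^{j+1}$. From $j\ge 2\tau+1$ we get $j-\tau\ge \tau+1\ge 1$, so for $k\ge 2$ one has $k(j-\tau)\ge 2(j-\tau)=j+(j-2\tau)\ge j+1$; hence $p^{j+1}\mid c_k\,t^k\,p^{k(j-\tau)}$ for every integer $t$. Therefore
$$f(a+t\,p^{j-\tau})\equiv f(a)+f'(a)\,t\,p^{j-\tau}\pmod{p^{j+1}}.$$

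Next I would use the hypotheses to simplify the right-hand side. Write $f(a)=p^{j}m$ with $m\in\ZZ$ (possible since $p^{j}\mid f(a)$) and $f'(a)=p^{\tau}u$ with $p\nmid u$ (this is the meaning of $p^{\tau}\mid\mid f'(a)$). Then
$$f(a+t\,p^{j-\tau})\equiv p^{j}m+p^{\tau}u\,t\,p^{j-\tau}=p^{j}(m+ut)\pmod{p^{j+1}},$$
so the congruence $f(a+t\,p^{j-\tau})\equiv 0\pmod{p^{j+1}}$ is equivalent to $m+ut\equiv 0\pmod p$. Since $u$ is a unit modulo $p$, this linear congruence has the unique solution $t\equiv -m\,u^{-1}\pmod p$, giving both existence and uniqueness.

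I do not anticipate a real obstacle; the only point requiring care is the bookkeeping of $p$-adic valuations in the Taylor expansion, and in particular noting that the hypothesis $j\ge 2\tau+1$ is precisely what forces the quadratic (hence all higher) terms to die modulo $p^{j+1}$ while the linear term retains valuation exactly $j$. As a sanity check, taking $\tau=0$ and $j=1$ recovers the classical Hensel lemma.
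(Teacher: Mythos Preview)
Your argument is correct and is the standard proof of this classical result. Note that the paper itself does not supply a proof of this lemma: it is stated as a well-known fact (the generalized Hensel's lemma) and then invoked as a tool later on. Your Taylor-expansion approach, with the key observation that $j\ge 2\tau+1$ forces $2(j-\tau)\ge j+1$ so that all quadratic and higher terms vanish modulo $p^{j+1}$, is exactly the textbook argument one would expect here.
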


\section{Classification of $2$-adic Galois images attached to elliptic curves defined over $\QQ$ with CM}\label{sec-Alvaros work}

In \cite{al-rCMGRs}, Lozano-Robledo classified the image of $\ell$-adic Galois representations attached to elliptic curves defined over $\QQ$ with CM for all primes $\ell$. In this section, we will briefly go over the results from \cite{al-rCMGRs} that are important in this paper.

For the rest of the paper, let $K = \QQ(\sqrt{d})$ be a quadratic imaginary field, let $\mathcal{O}_{K}$ be the ring of integers of $K$ with discriminant $\Delta_{K}$. Then $\Delta_{K} = d$ if $d$ is congruent to $1$ modulo $4$ and $\Delta_{K} = 4d$ otherwise. Let $f$ be a positive integer and let $\mathcal{O}_{K,f}$ be the order of $K$ of conductor $f$.

\begin{theorem}[Theorem 1.1, \cite{al-rCMGRs}]\label{Theorem 1.1}
Let $E/\QQ$ be an elliptic curve with CM by $\mathcal{O}_{K,f}$, let $N$ be an even integer greater than or equal to $4$ and let $\overline{\rho}_{E,N} \colon G_{\QQ} \to \operatorname{GL}(2, \ZZ / N \ZZ)$.
\begin{itemize}
    \item If $\Delta_{K} \cdot f^{2} \equiv 0 \mod 4$, then set $\delta = \frac{\Delta_{K} \cdot f^{2}}{4}$ and $\phi = 0$.
    
    \item If $\Delta_{K} \cdot f^{2} \equiv 1 \mod 4$, then set $\delta = \frac{(\Delta_{K}-1)}{4} \cdot f^{2}$ and $\phi = f$.
\end{itemize}
Define the group $\mathcal{C}_{\delta,\phi}(N)$ to be the subgroup of $\operatorname{GL}(2, \ZZ / N \ZZ)$ consisting of all matrices of the form $\begin{bmatrix} a+b \phi & b \\ \delta b & a \end{bmatrix}$ and define $\mathcal{N}_{\delta, \phi}(N)$ to be the group $\mathcal{N}_{\delta, \phi}(N) = \left\langle \mathcal{C}_{\delta, \phi}(N), \begin{bmatrix} -1 & 0 \\ \phi & 1 \end{bmatrix} \right\rangle$. Then
\begin{enumerate}
    \item there is a $\ZZ / N \ZZ$-basis of $E[N]$ such that $\overline{\rho}_{E,N}(G_{\QQ})$ is contained in $\mathcal{N}_{\delta, \phi}(N)$
    \item and $\mathcal{C}_{\delta, \phi}(N) \cong \left(\mathcal{O}_{K,f} / N \mathcal{O}_{K,f} \right)^{\times}$ is a subgroup of $\mathcal{N}_{\delta, \phi}(N)$ of index $2$.
\end{enumerate}
\end{theorem}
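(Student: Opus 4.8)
The plan is to exploit the module structure that complex multiplication imposes on $E[N]$ and to read off the two conclusions from the regular representation of the order $\mathcal{O}_{K,f}$ together with the action of complex conjugation.

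First I would fix a convenient generator of the order. In both cases of the dichotomy there is a $\ZZ$-basis $\{1,\beta\}$ of $\mathcal{O}_{K,f}$ with $\beta^{2}=\phi\beta+\delta$, equivalently $\Tr(\beta)=\phi$ and $\Norm(\beta)=-\delta$: when $\Delta_{K}f^{2}\equiv 0\bmod 4$ one takes $\beta=\tfrac{1}{2}f\sqrt{\Delta_{K}}$ (so $\beta^{2}=\delta$, $\phi=0$), and when $\Delta_{K}f^{2}\equiv 1\bmod 4$ one takes $\beta=f\omega$ with $\omega=\tfrac{1+\sqrt{\Delta_{K}}}{2}$. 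Writing the multiplication-by-$(a+b\beta)$ map of $\mathcal{O}_{K,f}/N\mathcal{O}_{K,f}$ in the ordered basis $\{\beta,1\}$ gives exactly the matrix $\begin{bmatrix} a+b\phi & b\\ \delta b & a\end{bmatrix}$, because $\beta\cdot\beta=\delta\cdot 1+\phi\cdot\beta$. This identifies the ring $\mathcal{O}_{K,f}/N\mathcal{O}_{K,f}$ with the ring of all such matrices; since the determinant of such a matrix is the norm form $a^{2}+ab\phi-\delta b^{2}=\Norm(a+b\beta)$, an element is invertible in $\operatorname{GL}(2,\ZZ/N\ZZ)$ if and only if it is a unit of the order, so restricting to units yields the group isomorphism $\mathcal{C}_{\delta,\phi}(N)\cong(\mathcal{O}_{K,f}/N\mathcal{O}_{K,f})^{\times}$ asserted in part (2).

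Next I would produce the basis of $E[N]$. The essential input is that $E[N]$ is free of rank $1$ over $R:=\mathcal{O}_{K,f}/N\mathcal{O}_{K,f}$, and I expect this to be the main obstacle. I would argue it prime by prime: the Tate module $T_{\ell}E$ spans a rank-one $K_{\ell}$-space on which $\mathcal{O}_{\ell}:=\mathcal{O}_{K,f}\otimes\ZZ_{\ell}$ acts, so $T_{\ell}E$ is a fractional $\mathcal{O}_{\ell}$-ideal whose ring of multipliers is exactly $\mathcal{O}_{\ell}$ (because $\operatorname{End}(E)=\mathcal{O}_{K,f}$); such a proper ideal is invertible, hence projective of rank one, hence free over the semilocal ring $\mathcal{O}_{\ell}$ — the delicate case being $\ell\mid f$, where $\mathcal{O}_{\ell}$ is local so that invertibility forces freeness. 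Reducing modulo $N$ and applying the CRT gives freeness of $E[N]$ over $R$. Choosing a generator $P$, the pair $\{\beta P, P\}$ is a $\ZZ/N\ZZ$-basis of $E[N]$, and this is the basis in which I will compute $\overline{\rho}_{E,N}$.

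Finally I would split $G_{\QQ}$ along $G_{K}:=\operatorname{Gal}(\overline{\QQ}/K)$. Since $E/\QQ$ has rational $j$-invariant, its CM order has class number one, so all endomorphisms are defined over $K$; hence each $\sigma\in G_{K}$ commutes with the $\mathcal{O}_{K,f}$-action and therefore acts as multiplication by a unit of $R$, giving $\overline{\rho}_{E,N}(G_{K})\subseteq\mathcal{C}_{\delta,\phi}(N)$ in the basis $\{\beta P,P\}$. For $\sigma_{0}\in G_{\QQ}\setminus G_{K}$ the restriction $\sigma_{0}|_{K}$ is complex conjugation, so $\sigma_{0}$ is conjugate-linear, namely $\sigma_{0}(r\cdot Q)=\bar r\cdot\sigma_{0}(Q)$. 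A direct computation shows that conjugation by $t:=\begin{bmatrix}-1&0\\ \phi&1\end{bmatrix}$ sends the matrix of $a+b\beta$ to the matrix of its conjugate $a+b\bar\beta=(a+b\phi)-b\beta$, so $t$ is itself a conjugate-linear involution; consequently $t^{-1}\,\overline{\rho}_{E,N}(\sigma_{0})$ is $R$-linear and invertible, hence lies in $\mathcal{C}_{\delta,\phi}(N)$, so that $\overline{\rho}_{E,N}(\sigma_{0})\in t\,\mathcal{C}_{\delta,\phi}(N)$. Since $t\notin\mathcal{C}_{\delta,\phi}(N)$ (it is non-scalar with zero upper-right entry) and $t^{2}=\operatorname{Id}$, the group $\mathcal{N}_{\delta,\phi}(N)=\langle\mathcal{C}_{\delta,\phi}(N),t\rangle$ equals the disjoint union $\mathcal{C}_{\delta,\phi}(N)\sqcup t\,\mathcal{C}_{\delta,\phi}(N)$; this simultaneously proves the index-$2$ claim in part (2) and shows $\overline{\rho}_{E,N}(G_{\QQ})\subseteq\mathcal{N}_{\delta,\phi}(N)$, which is part (1).
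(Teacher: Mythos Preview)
The paper does not prove this statement; it is quoted verbatim from \cite{al-rCMGRs} (as the label indicates) and used as a black box throughout Sections~\ref{sec-Alvaros work} and~\ref{proofs}. There is therefore no in-paper proof to compare your proposal against.

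That said, your outline is the standard argument and is correct. The identification of $\mathcal{C}_{\delta,\phi}(N)$ with $(\mathcal{O}_{K,f}/N\mathcal{O}_{K,f})^{\times}$ via the regular representation in the ordered basis $\{\beta,1\}$ is exactly right, and the conjugate-linearity step for the nontrivial $G_{K}$-coset is clean; your verification that $t=\begin{bmatrix}-1&0\\\phi&1\end{bmatrix}$ conjugates $m(a+b\beta)$ to $m(a+b\bar\beta)$ is the key computation, and the conclusion $\overline{\rho}_{E,N}(\sigma_{0})\in t\,\mathcal{C}_{\delta,\phi}(N)$ then uses (implicitly) that the centralizer of $\mathcal{C}_{\delta,\phi}(N)$ in $\operatorname{GL}(2,\ZZ/N\ZZ)$ is $\mathcal{C}_{\delta,\phi}(N)$ itself --- a one-line check against the matrix $m(\beta)$. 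The genuine content is the freeness of $E[N]$ over $\mathcal{O}_{K,f}/N\mathcal{O}_{K,f}$, and your route through proper (hence invertible, hence locally free) $\mathcal{O}_{K,f}\otimes\ZZ_{\ell}$-ideals is the right one, with the non-maximal primes $\ell\mid f$ indeed the only place requiring care. One small remark: class number one is what allows $E$ to be defined over $\QQ$ at all, but the fact that every endomorphism is defined over $K$ follows already from $E/\QQ$ having CM by an order in $K$, independent of the class number.
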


\begin{theorem}[Theorem 1.2, \cite{al-rCMGRs}]\label{Theorem 1.2}
Let $E/\QQ$ be an elliptic curve with CM by $\mathcal{O}_{K,f}$.
\begin{itemize}
    \item If $\Delta_{K} \cdot f^{2} \equiv 0 \mod 4$, then set $\delta = \frac{\Delta_{K} \cdot f^{2}}{4}$ and $\phi = 0$.
    
    \item If $\Delta_{K} \cdot f^{2} \equiv 1 \mod 4$, then set $\delta = \frac{(\Delta_{K}-1)}{4} \cdot f^{2}$ and $\phi = f$.
\end{itemize}
Let $\rho_{E}$ be the Galois representation $\rho_{E} \colon \operatorname{Gal}(\overline{\QQ}/\QQ) \to \varprojlim \operatorname{Aut}(E[N]) \cong \operatorname{GL}(2, \widehat{\ZZ})$ and let $\mathcal{N}_{\delta,\phi} = \varprojlim \mathcal{N}_{\delta,\phi}(N)$. Then there is a compatible system of bases of $E[N]$ such that the image of $\rho_{E}$ is contained in $\mathcal{N}_{\delta,\phi}$, and the index of the image of $\rho_{E}$ in $\mathcal{N}_{\delta,\phi}$ is a divisor of the order of $\mathcal{O}_{K,f}^{\times}$. In particular, the index is a divisor of $4$ or $6$.
\end{theorem}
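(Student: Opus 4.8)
The plan is to deduce the statement from the finite-level result Theorem \ref{Theorem 1.1} together with the main theorem of complex multiplication, splitting the work into a \emph{containment} step and an \emph{index} step. Throughout write $\mathcal{O} = \mathcal{O}_{K,f}$ for the CM order and $\widehat{\mathcal{O}} = \mathcal{O} \otimes_{\ZZ} \widehat{\ZZ} = \varprojlim_{N} \mathcal{O}/N\mathcal{O}$. Since $j_{E} \in \QQ$, the ring class number $h(\mathcal{O})$ equals $[\QQ(j_{E}):\QQ] = 1$, so $\mathcal{O}$ is one of the thirteen orders of class number one, and in each case $|\mathcal{O}^{\times}| \in \{2,4,6\}$, with the values $4$ and $6$ occurring only for $\ZZ[i]$ and $\ZZ[\zeta_{3}]$.

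For the containment I would first observe that the Tate module $\varprojlim_{N} E[N]$ is free of rank one over $\widehat{\mathcal{O}}$ (this is what underlies the identification $\mathcal{C}_{\delta,\phi}(N) \cong (\mathcal{O}/N\mathcal{O})^{\times}$ in Theorem \ref{Theorem 1.1}(2)). A single $\widehat{\mathcal{O}}$-generator then produces a compatible system of bases $\{P_{N}, \theta P_{N}\}$ of the $E[N]$, where $\theta$ is a fixed $\ZZ$-generator of $\mathcal{O}$. In every such basis the $\mathcal{O}$-linear automorphisms of $E[N]$ are exactly $\mathcal{C}_{\delta,\phi}(N)$, so Theorem \ref{Theorem 1.1} applies simultaneously at all levels, giving $\overline{\rho}_{E,N}(G_{\QQ}) \subseteq \mathcal{N}_{\delta,\phi}(N)$ \emph{compatibly}; passing to the inverse limit places the image of $\rho_{E}$ inside $\mathcal{N}_{\delta,\phi}$.

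For the index I would reduce to $G_{K}$. The CM endomorphisms are defined over $K$ (the ring class field here is $K$), so an element of $G_{\QQ}$ acts $\mathcal{O}$-linearly precisely when it fixes $K$; hence $\rho_{E}^{-1}(\mathcal{C}_{\delta,\phi}) = G_{K}$, while complex conjugation acts $\mathcal{O}$-semilinearly and so lands in the nontrivial coset of $\mathcal{C}_{\delta,\phi}$ in $\mathcal{N}_{\delta,\phi}$ (index $2$ by Theorem \ref{Theorem 1.1}(2)). The second isomorphism theorem then gives
$$[\mathcal{N}_{\delta,\phi} : \rho_{E}(G_{\QQ})] = [\mathcal{C}_{\delta,\phi} : \rho_{E}(G_{K})] = [\widehat{\mathcal{O}}^{\times} : \rho_{E}(G_{K})],$$
using $\mathcal{C}_{\delta,\phi} \cong \widehat{\mathcal{O}}^{\times}$. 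The final step computes this index via CM reciprocity: $\rho_{E}|_{G_{K}}$ factors through $\operatorname{Gal}(K^{\mathrm{ab}}/K)$, and for a finite unit idele $u \in \widehat{\mathcal{O}}^{\times}$ the Artin symbol $[u,K]$ acts on torsion by multiplication by $w(u)\,u^{-1}$, where $w(u) \in \mathcal{O}^{\times}$ is the unit ambiguity of the analytic uniformization. Since $u \mapsto u^{-1}$ is surjective, the composite $\widehat{\mathcal{O}}^{\times} \to \widehat{\mathcal{O}}^{\times} \to \widehat{\mathcal{O}}^{\times}/\mathcal{O}^{\times}$ is surjective, whence $\rho_{E}(G_{K}) \cdot \mathcal{O}^{\times} = \widehat{\mathcal{O}}^{\times}$ and
$$[\widehat{\mathcal{O}}^{\times} : \rho_{E}(G_{K})] = [\mathcal{O}^{\times} : \mathcal{O}^{\times} \cap \rho_{E}(G_{K})]$$
divides $|\mathcal{O}^{\times}| \in \{2,4,6\}$; as $2 \mid 4$, the index divides $4$ or $6$ in every case.

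The main obstacle will be the last step: applying the main theorem of CM cleanly for a possibly non-maximal order $\mathcal{O}_{K,f}$. One must use the ring-class-number-one identity $\mathbb{A}_{K,\mathrm{fin}}^{\times} = K^{\times}\widehat{\mathcal{O}}^{\times}$ (so that $\operatorname{Gal}(K^{\mathrm{ab}}/K) \cong \widehat{\mathcal{O}}^{\times}/\mathcal{O}^{\times}$) and correctly track the unit-valued correction $w(u)$ arising from $\operatorname{Aut}(\CC/\mathfrak{a}) = \mathcal{O}^{\times}$. Once that bookkeeping is in place the surjectivity modulo $\mathcal{O}^{\times}$ is immediate, so the remaining content is genuinely the classical CM reciprocity law for the order $\mathcal{O}_{K,f}$, while the containment and the reduction to $G_{K}$ are formal consequences of Theorem \ref{Theorem 1.1}.
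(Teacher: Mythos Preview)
The paper does not give its own proof of this statement: Theorem~\ref{Theorem 1.2} is quoted verbatim from \cite{al-rCMGRs} (Lozano-Robledo) in Section~\ref{sec-Alvaros work}, which is explicitly a summary of external results, so there is nothing in the present paper to compare your argument against.

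That said, your outline is the standard one and is essentially how the result is established in the cited reference. The two-step split (containment via a rank-one $\widehat{\mathcal{O}}$-module structure on the adelic Tate module, then the index bound via $\rho_E^{-1}(\mathcal{C}_{\delta,\phi})=G_K$ and the main theorem of CM) is exactly the right architecture, and your identification $[\mathcal{N}_{\delta,\phi}:\rho_E(G_\QQ)]=[\widehat{\mathcal{O}}^\times:\rho_E(G_K)]$ is the key reduction. The one place to be careful is precisely where you flag it: for a non-maximal order $\mathcal{O}_{K,f}$ the reciprocity statement must be phrased for the order (e.g.\ via Shimura's formulation or Stevenhagen's treatment of ring class fields), and one should check that the class-number-one hypothesis really yields $\mathbb{A}_{K,\mathrm{fin}}^\times = K^\times\widehat{\mathcal{O}}_{K,f}^\times$ rather than only the analogous statement for $\mathcal{O}_K$. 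Once that is in hand, the surjectivity of $\rho_E(G_K)\to \widehat{\mathcal{O}}^\times/\mathcal{O}^\times$ follows and the divisibility by $|\mathcal{O}_{K,f}^\times|\in\{2,4,6\}$ drops out.
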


From now on, let $H_{f} = K(\textit{j}_{K,f})$

\begin{theorem}[Theorem 1.6, \cite{al-rCMGRs}]\label{Theorem 1.6}
Let $E/\QQ(\textit{j}_{K,f})$ be an elliptic curve with CM by an order $\mathcal{O}_{\delta,f}$ in an imaginary field $K$ with $\textit{j}_{E} \neq 0, 1728$. Then, for every $m \geq 1$, we have $\operatorname{Gal}(H_{f}(E[2^{m}])/H_{f}) \subseteq \left( \mathcal{O}_{K,f} / 2^{m} \mathcal{O}_{K,f}\right)^{\times}$. Suppose that $\operatorname{Gal}(H_{f}(E[2^{n}]) / H_{f}) \subsetneq \left(\mathcal{O}_{K,f} / 2^{n}\mathcal{O}_{K,f}\right)^{\times}$ for some positive integer $n$ and assume that $n$ is the smallest such integer. Then $n \leq 3$ and for all $m \geq 3$, we have
$$\operatorname{Gal}(H_{f}(E[2^{m}]) / H_{f}) \cong \left(\mathcal{O}_{K,f} / 2^{m}\mathcal{O}_{K,f}\right)^{\times} / \{\pm 1\}.$$
Further, there are two possibilities:
\begin{enumerate}
    \item If $n \leq 2$, then $\operatorname{Gal}(H_{f}(E[4])/H_{f}) \cong \left(\mathcal{O}_{K,f} / 4 \mathcal{O}_{K,f}\right)^{\times} / \{\pm 1 \}$ and:
    \begin{enumerate}
        \item $disc(\left(\mathcal{O}_{K,f}\right) = \Delta_{K} \cdot f^{2} \equiv 0 \mod 16$. In particular, we have either
        \begin{itemize}
            \item $\Delta_{K} \equiv 1 \mod 4$ and $f \equiv 0 \mod 4$, or
            \item $\Delta_{K} \equiv 0 \mod 4$ and $f \equiv 0 \mod 2$.
        \end{itemize}
        \item $\QQ(i) \subseteq H_{f}$.
        \item For each $m \geq 2$, there is a $\ZZ / 2^{m} \ZZ$-basis of $E[2^{m}]$ such that the image of the Galos representation $\rho_{E,2^{m}} \colon \operatorname{Gal}(\overline{H}_{f} / H_{f}) \to \operatorname{GL}(2, \ZZ / 2^{m} \ZZ)$ is one of the groups
        \begin{center}
            $J_{1} = \left\langle \begin{bmatrix} 5 & 0 \\ 0 & 5 \end{bmatrix}, \begin{bmatrix} 1 & 1 \\ \delta & 1 \end{bmatrix} \right\rangle$ or $J_{2} = \left\langle \begin{bmatrix} 5 & 0 \\ 0 & 5 \end{bmatrix}, \begin{bmatrix} -1 & -1 \\ -\delta & -1 \end{bmatrix} \right\rangle \subseteq \mathcal{C}_{\delta,0}(2^{m})$.
        \end{center}
        \end{enumerate}
        \item If $n = 3$, then $\operatorname{Gal}(H_{f}(E[4]) / H_{f}) \cong \left(\mathcal{O}_{K} / 4 \mathcal{O}_{K,f}\right)^{\times}$ and:
        \begin{enumerate}
            \item $\Delta_{K} \equiv 0 \mod 8$.
            \item For each $m \geq 3$, there is a $\ZZ / 2^{m} \ZZ$-basis of $E[2^{m}]$ such that the image of the Galois representation $\rho_{E,2^{m}} \colon \operatorname{Gal}(\overline{H}_{f} / H_{f}) \to \operatorname{GL}(2, \ZZ / 2^{m} \ZZ)$ is the group
            \begin{center}
            $J_{1} = \left\langle \begin{bmatrix} 3 & 0 \\ 0 & 3 \end{bmatrix}, \begin{bmatrix} 1 & 1 \\ \delta & 1 \end{bmatrix} \right\rangle$ or $J_{2} = \left\langle \begin{bmatrix} 3 & 0 \\ 0 & 3 \end{bmatrix}, \begin{bmatrix} -1 & -1 \\ -\delta & -1 \end{bmatrix} \right\rangle \subseteq \mathcal{C}_{\delta,0}(2^{m})$.
        \end{center}
        \end{enumerate}
        \end{enumerate}
        Finally, there is some $\epsilon \in \{\pm 1 \}$ and $\alpha \in \{3, 5\}$ such that the image of $\rho_{E,2^{\infty}}$ is a conjugate of
        \begin{center}
            $\left\langle \begin{bmatrix} \epsilon & 0 \\ 0 & -\epsilon \end{bmatrix}, \begin{bmatrix} \alpha & 0 \\ 0 & \alpha \end{bmatrix}, \begin{bmatrix} 1 & \delta \\ 1 & 1 \end{bmatrix} \right\rangle$ or $\left\langle \begin{bmatrix} \epsilon & 0 \\ 0 & -\epsilon \end{bmatrix}, \begin{bmatrix} \alpha & 0 \\ 0 & \alpha \end{bmatrix}, \begin{bmatrix} -1 & -\delta \\ -1 & -1 \end{bmatrix} \right\rangle \subseteq \operatorname{GL}(2, \ZZ_{2})$.
        \end{center}
\end{theorem}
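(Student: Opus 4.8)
The plan is to prove the theorem in three stages: first show that over $H_f$ the action on $2$-power torsion is governed by the unit group of the CM order; then, granting that this governing inclusion is not an equality at some level, show the defect has index $2$ and stabilizes by level $3$; and finally make the image explicit and descend from $H_f$ to the base field. First I would invoke the theory of complex multiplication (see Theorem~\ref{Theorem 1.1}): since $E$ acquires CM by $\mathcal{O}_{K,f}$ rationally over $H_f$, the absolute Galois group of $H_f$ acts $\mathcal{O}_{K,f}$-linearly on $E[2^m]\cong\mathcal{O}_{K,f}/2^m\mathcal{O}_{K,f}$ and therefore through $(\mathcal{O}_{K,f}/2^m\mathcal{O}_{K,f})^{\times}$, which gives the asserted inclusion $\operatorname{Gal}(H_f(E[2^m])/H_f)\subseteq(\mathcal{O}_{K,f}/2^m\mathcal{O}_{K,f})^{\times}$. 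Since $j_E\neq 0,1728$ forces $\mathcal{O}_{K,f}^{\times}=\{\pm1\}$, Theorem~\ref{Theorem 1.2} bounds the index of this subgroup by $2$, so the remaining content of the theorem is to decide, in the deficient case, which index-$2$ subgroup occurs and from which level on; the abstract isomorphism with $(\mathcal{O}_{K,f}/2^m\mathcal{O}_{K,f})^{\times}/\{\pm1\}$ just records that the missing direction is the one cut out by $\operatorname{-Id}$, i.e.\ by a quadratic twist.

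Next, assuming the image is a proper (hence index-$2$) subgroup for some $m$ and letting $n$ be minimal, I would run a lifting argument built on the Generalized Hensel Lemma (Lemma~\ref{Hensel}), applied to the $2$-power division polynomials of $E$ (equivalently, to the minimal polynomials over $H_f(E[2^n])$ of suitable coordinates of a point of exact order $2^{n+1}$). Once such a polynomial has a root modulo a power of $2$ exceeding twice the $2$-adic valuation of its derivative, the root lifts uniquely, which is precisely the statement that for $m\geq n$ the mod-$2^{m+1}$ image is the full preimage of the mod-$2^m$ image; combined with the rigidity of the $2$-adic unit filtration of $\mathcal{O}_{K,f}\otimes\ZZ_2$ from its third graded piece onward (where squaring already carries each graded piece onto the next), this yields both $n\leq 3$ and the stabilized shape $\operatorname{Gal}(H_f(E[2^m])/H_f)\cong(\mathcal{O}_{K,f}/2^m\mathcal{O}_{K,f})^{\times}/\{\pm1\}$ for $m\geq 3$. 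The split into case (1) ($n\leq 2$, scalar generator $5$, and $\QQ(i)\subseteq H_f$) and case (2) ($n=3$, scalar generator $3$) is then extracted from how $2$ behaves in the order, i.e.\ from $\Delta_K f^2\equiv 0\bmod 16$ versus $\Delta_K\equiv 0\bmod 8$ with $\Delta_K f^2\not\equiv 0\bmod 16$, these being exactly the congruences separating the two possible $2$-adic unit filtrations. This stage, comprising the Hensel lifting together with pinning down which index-$2$ subgroup is realized in each case, is the step I expect to be the main obstacle.

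Finally, I would make the image explicit and descend. In every deficient case $\Delta_K f^2\equiv 0\bmod 4$, so $\phi=0$ and everything lives in $\mathcal{C}_{\delta,0}(2^m)$; I would identify the relevant subgroup as generated by the scalar $\smallmat{\alpha}{0}{0}{\alpha}$ with $\alpha\in\{3,5\}$ (the unit generating the part of $(\ZZ/2^m\ZZ)^{\times}$ that survives the quotient by $\{\pm1\}$) together with the image $\smallmat{1}{1}{\delta}{1}$ of $1+\sqrt{\delta}$, where $\sqrt{\delta}$ is a uniformizer of the ramified local quadratic order; this gives $J_1$, while the opposite sign convention for $\sqrt{\delta}$ gives $J_2$, and the twist-pair lemmas (Lemma~\ref{index of quadratic twists}, Corollary~\ref{subgroups of index 2 contain -Id}) show that $J_1$ and $J_2$ differ only by a quadratic twist. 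To reach the image of $\rho_{E,2^{\infty}}$ over the base field I would adjoin a lift of complex conjugation: $H_f/\QQ(\textit{j}_{K,f})$ is quadratic, and, using that the Weil pairing makes the determinant surjective onto $(\ZZ/2^m\ZZ)^{\times}$ and that a complex-conjugation element is present, such a lift acts on $E[2^m]$ by an $\mathcal{O}_{K,f}$-antilinear involution conjugate to $\smallmat{\epsilon}{0}{0}{-\epsilon}$; appending it to $J_i$ produces exactly $\left\langle\smallmat{\epsilon}{0}{0}{-\epsilon},\smallmat{\alpha}{0}{0}{\alpha},\smallmat{1}{\delta}{1}{1}\right\rangle$, or its sign variant, inside $\operatorname{GL}(2,\ZZ_2)$. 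The parameters are then determined case by case: $\alpha$ by $n$ (hence by $\Delta_K f^2\bmod 16$), and $\epsilon$ by the twist class of $E$, pinned down either by whether $\operatorname{-Id}$ lies in the image or by a single explicit computation on one representative curve.
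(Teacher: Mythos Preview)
The paper does not contain a proof of this theorem. It is quoted verbatim as Theorem~1.6 from Lozano-Robledo's paper \cite{al-rCMGRs}, together with Theorems~\ref{Theorem 1.1}, \ref{Theorem 1.2}, \ref{Theorem 1.7}, and \ref{Theorem 1.8}; Section~\ref{sec-Alvaros work} is explicitly a summary of results from that reference, not a re-derivation. So there is no proof in the present paper to compare your proposal against.

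That said, your outline is broadly the right shape for how such a result is established in \cite{al-rCMGRs}: the containment in $(\mathcal{O}_{K,f}/2^m\mathcal{O}_{K,f})^{\times}$ comes from CM theory, the index bound from $|\mathcal{O}_{K,f}^{\times}|=2$ when $j\neq 0,1728$, the stabilization by level~$3$ from the structure of the $2$-adic unit filtration of the local order, and the descent from $H_f$ by adjoining an antilinear involution corresponding to complex conjugation. Where your sketch is thinnest is the middle step: invoking Lemma~\ref{Hensel} on ``division polynomials'' is suggestive, but the actual argument in the source works directly with the filtration of $(\mathcal{O}_{K,f}\otimes\ZZ_2)^{\times}$ and the explicit case analysis of how $2$ splits, ramifies, or is inert in $\mathcal{O}_{K,f}$, rather than lifting roots of polynomials; your Hensel-on-division-polynomials formulation would need substantial work to turn into a proof, and it is not clear it recovers the precise dichotomy $\alpha\in\{3,5\}$ and the congruence conditions on $\Delta_K f^2$ without essentially redoing that local analysis. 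If your goal is to supply a self-contained proof here, you should consult \cite{al-rCMGRs} directly; if your goal is only to use the theorem, the paper treats it as a black box and so may you.
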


\begin{corollary}\label{Theorem 1.6 corollary 1}
Let $E/\QQ$ be an elliptic curve with CM by an order $\mathcal{O}_{K,f}$ in the number field $K$ with discriminant $\Delta_{K}$ and conductor $f$ and $\textit{j}_{E} \neq 0, 1728$. Let $m$ be a non-negative integer. Then $\overline{\rho}_{E,2^{m+3}}(G_{\QQ})$ is conjugate to the full lift of $\overline{\rho}_{E,8}(G_{\QQ})$ inside the group $\mathcal{N}_{\delta,\phi}\left(2^{3+m}\right)$.
\end{corollary}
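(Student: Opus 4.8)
The plan is to reduce the statement to the two parts of Theorem \ref{Theorem 1.6}, handling separately the case $n \le 3$ (where the mod-$2^n$ image is already the full $\left(\mathcal{O}_{K,f}/2^n\mathcal{O}_{K,f}\right)^\times$ and the mod-$2^\infty$ image over $H_f$ stabilizes in a controlled way) and the generic case, and in each case showing that no new ``vertical'' constraint appears beyond level $8$. First I would observe that by Theorem \ref{Theorem 1.1}, for every $m \ge 0$ there is a compatible basis in which $\overline{\rho}_{E,2^{m+3}}(G_{\QQ}) \subseteq \mathcal{N}_{\delta,\phi}(2^{m+3})$, and that the reduction map $\mathcal{N}_{\delta,\phi}(2^{m+3}) \to \mathcal{N}_{\delta,\phi}(8)$ is surjective (this is immediate from the description of $\mathcal{C}_{\delta,\phi}$ by congruence conditions on $a,b$ together with the single extra generator $\smallmat{-1}{0}{\phi}{1}$, whose reduction is the corresponding generator one level down). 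Thus $\overline{\rho}_{E,2^{m+3}}(G_{\QQ})$ is \emph{contained in} the full preimage of $\overline{\rho}_{E,8}(G_{\QQ})$ inside $\mathcal{N}_{\delta,\phi}(2^{m+3})$, and the content of the corollary is the reverse inclusion.

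For the reverse inclusion I would argue by induction on $m$, showing that the reduction map $\overline{\rho}_{E,2^{m+4}}(G_{\QQ}) \to \overline{\rho}_{E,2^{m+3}}(G_{\QQ})$ has kernel of full size, i.e.\ of size $\lvert \ker(\mathcal{N}_{\delta,\phi}(2^{m+4}) \to \mathcal{N}_{\delta,\phi}(2^{m+3})) \rvert$. Since $\mathcal{C}_{\delta,\phi} \cong \left(\mathcal{O}_{K,f}/\cdot\,\mathcal{O}_{K,f}\right)^\times$ (Theorem \ref{Theorem 1.1}(2)), this reduces to the corresponding statement for the image over $H_f$, namely that $\operatorname{Gal}(H_f(E[2^{m+4}])/H_f) \to \operatorname{Gal}(H_f(E[2^{m+3}])/H_f)$ is surjective onto the full kernel subgroup of $\left(\mathcal{O}_{K,f}/2^{m+4}\mathcal{O}_{K,f}\right)^\times \to \left(\mathcal{O}_{K,f}/2^{m+3}\mathcal{O}_{K,f}\right)^\times$. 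This is exactly what Theorem \ref{Theorem 1.6} provides: in case (1) ($n \le 2$) we have $\operatorname{Gal}(H_f(E[2^m])/H_f) \cong \left(\mathcal{O}_{K,f}/2^m\mathcal{O}_{K,f}\right)^\times/\{\pm1\}$ for all $m \ge 3$, and the reduction maps between these quotients are visibly surjective on kernels for $m \ge 3$; in case (2) ($n = 3$) we have $\operatorname{Gal}(H_f(E[2^m])/H_f) \cong \left(\mathcal{O}_{K,f}/2^m\mathcal{O}_{K,f}\right)^\times/\{\pm1\}$ for all $m \ge 3$ as well. In both cases the index-$2$ ambiguity ($\{\pm 1\}$ versus the full unit group) and the choice of $J_1$ versus $J_2$ are already visible at level $8$, so they impose no further condition at higher level. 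One then combines this with the fact that the nontrivial coset of $\mathcal{C}_{\delta,\phi}$ in $\mathcal{N}_{\delta,\phi}$ (complex conjugation) is also already realized at level $8$ and lifts compatibly, to conclude that the image at level $2^{m+3}$ is the full preimage of the level-$8$ image.

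The main obstacle I expect is bookkeeping around the two ``sporadic'' small cases: when $n \le 2$ one must check that the relevant Galois group over $H_f$ really does surject onto $\left(\mathcal{O}_{K,f}/4\mathcal{O}_{K,f}\right)^\times/\{\pm1\}$ already (so that level $8$ captures everything), and when $n=3$ that the level-$4$ image is the full $\left(\mathcal{O}_{K,f}/4\mathcal{O}_{K,f}\right)^\times$ while the level-$8$ image has already dropped to the index-$2$ subgroup — this is precisely why the cutoff in the corollary is $2^3$ and not $2^2$. A secondary point to be careful about is that $\phi$ may be nonzero (the $\Delta_K f^2 \equiv 1 \bmod 4$ case), so one must verify surjectivity of $\mathcal{N}_{\delta,\phi}(2^{m+4}) \to \mathcal{N}_{\delta,\phi}(2^{m+3})$ and the kernel-size computation with the $\phi$-twisted generator in place; this is routine since $\phi$ is a fixed integer and conjugation by $\smallmat{-1}{0}{\phi}{1}$ preserves the congruence-defined subgroups, but it should be spelled out. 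Finally, one invokes Lemma \ref{ell-adic Galois images}/Corollary \ref{coprime isogeny-degree} only implicitly here — the real engine is Theorem \ref{Theorem 1.6} — so the proof is short once the reduction is set up.
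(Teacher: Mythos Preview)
Your approach is correct and is essentially the argument the paper has in mind, though the paper itself gives no proof at all: Corollary~\ref{Theorem 1.6 corollary 1} is stated immediately after Theorem~\ref{Theorem 1.6} and treated as an evident consequence, with no further justification.

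One remark on efficiency: your inductive kernel-size argument works, but it is more elaborate than necessary. The ``Finally'' clause of Theorem~\ref{Theorem 1.6} already hands you the $2$-adic image $\rho_{E,2^\infty}(G_\QQ)$ as an explicit finitely generated subgroup of $\operatorname{GL}(2,\ZZ_2)$ (either the full $\mathcal{N}_{\delta,\phi}(2^\infty)$ in the generic case, or one of the groups $\left\langle \smallmat{\epsilon}{0}{0}{-\epsilon}, \alpha\cdot\operatorname{Id}, \smallmat{\pm 1}{\pm 1}{\pm\delta}{\pm 1}\right\rangle$ with $\epsilon\in\{\pm1\}$, $\alpha\in\{3,5\}$). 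From this explicit description one can simply observe that in each case the group is the full preimage in $\mathcal{N}_{\delta,\phi}(2^\infty)$ of its reduction mod~$8$: the parameters $\epsilon$, $\alpha$, and the sign choice $J_1$ versus $J_2$ are all visible mod~$8$, and the generators lift freely thereafter. This bypasses the need to track kernels of successive reduction maps and makes the role of the cutoff $2^3$ transparent. Your careful treatment of the $\phi\neq 0$ case and the bookkeeping around $n\le 2$ versus $n=3$ is accurate but likewise subsumed by appealing directly to the explicit group list.
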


\begin{corollary}\label{Theorem 1.6 corollary 2}
Let $E/\QQ$ be an elliptic curve with CM by an order $\mathcal{O}_{K,f}$ in the number field $K$ with discriminant $\Delta_{K}$ and conductor $f$ and $\textit{j}_{E} \neq 0, 1728$. If $\Delta_{K} \cdot f^{2}$ is not divisible by $8$, then $\rho_{E,2^{\infty}}(G_{\QQ})$ is conjugate to $\mathcal{N}_{\delta, \phi}(2^{\infty})$.
\end{corollary}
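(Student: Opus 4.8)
The strategy is to pin down the mod-$8$ image and then propagate it to all $2$-power levels using Corollary \ref{Theorem 1.6 corollary 1}. Since $E$ is defined over $\QQ$, its $\textit{j}$-invariant $\textit{j}_{E}=\textit{j}_{K,f}$ lies in $\QQ\subseteq K$, so the ring class field $H_{f}=K(\textit{j}_{K,f})$ equals $K$; this is what lets us feed information about $\operatorname{Gal}(H_{f}(E[2^{m}])/H_{f})$ directly into statements about $\overline{\rho}_{E,2^{m}}(G_{K})$, where $G_{K}=\operatorname{Gal}(\overline{\QQ}/K)$. Fix, once and for all, the $\ZZ/8\ZZ$-basis of $E[8]$ provided by Theorem \ref{Theorem 1.1}; in this basis $\overline{\rho}_{E,8}(G_{\QQ})\subseteq\mathcal{N}_{\delta,\phi}(8)$, the CM order $\mathcal{O}_{K,f}$ acts through the commutative ring $\mathcal{O}_{K,f}/8\mathcal{O}_{K,f}$ whose unit group is $\mathcal{C}_{\delta,\phi}(8)\cong(\mathcal{O}_{K,f}/8\mathcal{O}_{K,f})^{\times}$, and $\mathcal{C}_{\delta,\phi}(8)$ has index $2$ in $\mathcal{N}_{\delta,\phi}(8)$.

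First I would show $\overline{\rho}_{E,8}(G_{K})=\mathcal{C}_{\delta,\phi}(8)$. As every $\sigma\in G_{K}$ acts $\mathcal{O}_{K,f}$-linearly on $E[8]$, the theory of complex multiplication gives $\overline{\rho}_{E,8}(G_{K})\subseteq\mathcal{C}_{\delta,\phi}(8)$. For the reverse inclusion, note that $\disc(\mathcal{O}_{K,f})=\Delta_{K}f^{2}$ is not divisible by $8$ by hypothesis, so neither of the two alternatives of Theorem \ref{Theorem 1.6} can occur: case (1) would force $\Delta_{K}f^{2}\equiv 0\bmod 16$, and case (2) would force $\Delta_{K}\equiv 0\bmod 8$, each contradicting $8\nmid\Delta_{K}f^{2}$. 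Hence there is no $n$ with $\operatorname{Gal}(H_{f}(E[2^{n}])/H_{f})\subsetneq(\mathcal{O}_{K,f}/2^{n}\mathcal{O}_{K,f})^{\times}$, and in particular $\operatorname{Gal}(K(E[8])/K)=(\mathcal{O}_{K,f}/8\mathcal{O}_{K,f})^{\times}$ has the same order as $\mathcal{C}_{\delta,\phi}(8)$. Equality of orders upgrades the inclusion to $\overline{\rho}_{E,8}(G_{K})=\mathcal{C}_{\delta,\phi}(8)$.

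Next I would promote this from $G_{K}$ to $G_{\QQ}$. Since $\mathcal{C}_{\delta,\phi}(8)=\overline{\rho}_{E,8}(G_{K})\subseteq\overline{\rho}_{E,8}(G_{\QQ})\subseteq\mathcal{N}_{\delta,\phi}(8)$ and the first group has index $2$ in the last, $\overline{\rho}_{E,8}(G_{\QQ})$ is either $\mathcal{C}_{\delta,\phi}(8)$ or $\mathcal{N}_{\delta,\phi}(8)$. To rule out the former, let $c\in G_{\QQ}$ be a complex conjugation and let $\theta$ be a generator of $\mathcal{O}_{K,f}$ over $\ZZ$. Because $E/\QQ$ while the CM is defined only over $K$, we have $\overline{\rho}_{E,8}(c)\,[\theta]\,\overline{\rho}_{E,8}(c)^{-1}=[\overline{\theta}]$ on $E[8]$. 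If $[\theta]$ and $[\overline{\theta}]$ agreed on $E[8]$ then $\theta-\overline{\theta}\in 8\mathcal{O}_{K,f}$, forcing $64\mid(\theta-\overline{\theta})^{2}=\disc(\mathcal{O}_{K,f})=\Delta_{K}f^{2}$, contrary to $8\nmid\Delta_{K}f^{2}$; so $[\theta]\neq[\overline{\theta}]$ on $E[8]$. Consequently $\overline{\rho}_{E,8}(c)$ does not commute with $[\theta]$, whereas every element of $\mathcal{C}_{\delta,\phi}(8)$ lies in the commutative ring $\mathcal{O}_{K,f}/8\mathcal{O}_{K,f}$ acting on $E[8]$ and therefore does commute with $[\theta]$. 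Hence $\overline{\rho}_{E,8}(c)\notin\mathcal{C}_{\delta,\phi}(8)$, and $\overline{\rho}_{E,8}(G_{\QQ})=\mathcal{N}_{\delta,\phi}(8)$.

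Finally, Corollary \ref{Theorem 1.6 corollary 1} identifies $\overline{\rho}_{E,2^{3+m}}(G_{\QQ})$ with the full preimage of $\overline{\rho}_{E,8}(G_{\QQ})=\mathcal{N}_{\delta,\phi}(8)$ inside $\mathcal{N}_{\delta,\phi}(2^{3+m})$; since the reduction $\mathcal{N}_{\delta,\phi}(2^{3+m})\twoheadrightarrow\mathcal{N}_{\delta,\phi}(8)$ is surjective, that preimage is all of $\mathcal{N}_{\delta,\phi}(2^{3+m})$. Taking the inverse limit over $m$ yields $\rho_{E,2^{\infty}}(G_{\QQ})=\mathcal{N}_{\delta,\phi}(2^{\infty})$, up to the conjugation coming from the choice of compatible bases. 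I expect the step requiring the most care to be the bookkeeping of bases: one must make sure that the basis of Theorem \ref{Theorem 1.1} can be chosen so that $\overline{\rho}_{E,8}(G_{K})$ is literally the subgroup $\mathcal{C}_{\delta,\phi}(8)$ and that the complex-conjugation identity $\overline{\rho}_{E,8}(c)\,[\theta]\,\overline{\rho}_{E,8}(c)^{-1}=[\overline{\theta}]$ is read off in that same basis. The remaining inputs — the two congruence exclusions from Theorem \ref{Theorem 1.6} and the divisibility argument for $[\theta]\neq[\overline{\theta}]$ on $E[8]$ — are immediate once $8\nmid\Delta_{K}f^{2}$ is in hand.
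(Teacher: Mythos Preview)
Your proof is correct and follows essentially the approach the paper intends. The paper states this corollary without proof, treating it as an immediate consequence of Theorem~\ref{Theorem 1.6}: since $8\nmid\Delta_{K}f^{2}$, neither alternative (1)(a) nor (2)(a) of that theorem can hold, so no level $n$ with proper containment exists, and hence the Cartan is attained over $K$ at every $2$-power level; the normalizer over $\QQ$ then follows from the standard fact that complex conjugation acts $\mathcal{O}_{K,f}$-antilinearly. Your argument spells this out carefully, including the pleasant discriminant computation showing $[\theta]\neq[\overline{\theta}]$ on $E[8]$, and your bookkeeping caveat about compatible bases is well placed. One small simplification: you could bypass Corollary~\ref{Theorem 1.6 corollary 1} entirely by noting that your step~1 already gives $\overline{\rho}_{E,2^{m}}(G_{K})=\mathcal{C}_{\delta,\phi}(2^{m})$ for \emph{every} $m$ (not just $m=3$), and your complex-conjugation argument in step~2 works verbatim at each level, so the inverse limit follows directly.
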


\begin{theorem}[Theorem 1.7, \cite{al-rCMGRs}]\label{Theorem 1.7}

Let $E/\QQ$ be an elliptic curve with $\textit{j}_{E} = 1728$ and let $c \in G_{\QQ}$ represent complex conjugation and $\gamma  = \rho_{E,2^{\infty}}(G_{\QQ})(c)$. Let $G_{E,2^{\infty}}$ be the image of $\rho_{E,2^{\infty}}$ and let $G_{E,K,2^{\infty}} = \rho_{E,2^{\infty}}(G_{\QQ(i)})$. Then, there is a $\ZZ_{2}$-basis of $T_{2}(E) = \varprojlim E[2^{n}]$ such that $G_{E,K,2^{\infty}}$ is one of the following groups:
\begin{itemize}
    \item If $[\mathcal{C}_{-1,0}(2^{\infty}) : G_{E,K,2^{\infty}}] = 1$, then $G_{E,K,2^{\infty}}$ is all of $\mathcal{C}_{-1,0}(2^{\infty})$, i.e.,
    $$G_{1} = \left\{\begin{bmatrix} a & b \\ -b & a \end{bmatrix} \in \operatorname{GL}(2, \ZZ_{2}) : a^{2}+b^{2} \not \equiv 0 \mod 2 \right\}.$$
    \item If $[\mathcal{C}_{-1,0}(2^{\infty}) : G_{E,K,2^{\infty}}] = 2$, then $G_{E,K,2^{\infty}}$ is one of the following groups:
    \begin{center}
        $G_{2,a} = \left\langle \operatorname{-Id}, 3 \cdot \operatorname{Id}, \begin{bmatrix} 1 & 2 \\ -2 & 1 \end{bmatrix} \right\rangle$ or $G_{2,b} = \left\langle \operatorname{-Id}, 3 \cdot \operatorname{Id}, \begin{bmatrix} 2 & 1 \\ -1 & 2 \end{bmatrix} \right\rangle$.
    \end{center}
    \item If $[\mathcal{C}_{-1,0}(2^{\infty}) : G_{E,K,2^{\infty}}] = 4$, then $G_{E,K,2^{\infty}}$ is one of the following groups
    \begin{center} $G_{4,a} = \left\langle 5 \cdot \operatorname{Id}, \begin{bmatrix} 1 & 2 \\ -2 & 1 \end{bmatrix} \right\rangle$, or $G_{4,b} = \left\langle 5 \cdot \operatorname{Id}, \begin{bmatrix} -1 & -2 \\ 2 & -1 \end{bmatrix} \right\rangle$ or \end{center}
    \begin{center} $G_{4,c} = \left\langle -3 \cdot \operatorname{Id}, \begin{bmatrix} 2 & -1 \\ 1 & 2 \end{bmatrix} \right\rangle$, or $G_{4,d} = \left\langle -3 \cdot \operatorname{Id}, \begin{bmatrix} -2 & 1 \\ -1 & -2 \end{bmatrix} \right\rangle$. \end{center}
\end{itemize}
Moreover, $G_{E,2^{\infty}} = \left\langle \gamma, G_{E,K,2^{\infty}} \right\rangle = \left\langle \gamma', G_{E,K,2^{\infty}} \right\rangle$ is generated by one of the groups above, and an element
$$\gamma' \in \left\{ c_{1} = \begin{bmatrix} 1 & 0 \\ 0 & -1 \end{bmatrix}, c_{-1} = \begin{bmatrix} -1 & 0 \\ 0 & 1 \end{bmatrix}, c_{1}' = \begin{bmatrix} 0 & 1 \\ 1 & 0 \end{bmatrix}, c_{-1}' = \begin{bmatrix} 0 & -1 \\ -1 & 0 \end{bmatrix} \right\},$$
such that $\gamma \equiv \gamma' \mod 4$.
\end{theorem}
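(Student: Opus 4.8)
The plan is to treat the case $j_E = 1728$ by exploiting that here $E$ has CM by the maximal order $\ZZ[i]$ of $K = \QQ(i)$, so in the notation of Theorem \ref{Theorem 1.1} we have $\Delta_K = -4$, $f = 1$, $\delta = -1$, and $\phi = 0$. Thus Theorem \ref{Theorem 1.1} already supplies a $\ZZ_{2}$-basis for which $G_{E,K,2^{\infty}} = \rho_{E,2^{\infty}}(G_{\QQ(i)})$ is contained in the Cartan group $\mathcal{C}_{-1,0}(2^{\infty})$, and Theorem \ref{Theorem 1.2} bounds the index $[\mathcal{C}_{-1,0}(2^{\infty}) : G_{E,K,2^{\infty}}]$ by $|\mathcal{O}_{K}^{\times}| = 4$. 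Since $\mathcal{C}_{-1,0}(2^{\infty})$ is a pro-$2$ group, this index is $1$, $2$, or $4$, which is exactly the trichotomy of the statement. The remaining content is then (i) to pin down which subgroups of each index actually occur and (ii) to describe the coset of complex conjugation.

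For (i) I would identify $\mathcal{C}_{-1,0}(2^{\infty})$ with the unit group $(\ZZ_{2}[i])^{\times}$ via $\smallmat{a}{b}{-b}{a} \mapsto a + bi$, under which $\operatorname{-Id}$, $3\cdot\operatorname{Id}$, $5\cdot\operatorname{Id}$, and $\smallmat{1}{2}{-2}{1}$ become $-1$, $3$, $5$, and $1+2i$, while $\smallmat{0}{1}{-1}{0}$ becomes $i$. Because $\QQ_{2}(i)/\QQ_{2}$ is ramified with uniformizer $1+i$ and residue field $\FF_{2}$, the unit group decomposes as $\mu_{4} \times (1 + (1+i)\ZZ_{2}[i])$, and one reads off its index-$2$ and index-$4$ open subgroups from the filtration by principal units. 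Two constraints then cut this pool down to the listed groups: first, the Weil pairing forces $\det$ (equivalently the norm $a^{2}+b^{2}$) restricted to $G_{E,K,2^{\infty}}$ to equal $\chi_{\mathrm{cyc}}(G_{\QQ(i)}) = 1 + 4\ZZ_{2}$, since $\QQ(i) = \QQ(\zeta_{4})$; second, $G_{E,K,2^{\infty}}$ must be stable under conjugation by the complex-conjugation element of the next step. I would show these two conditions leave only $\mathcal{C}_{-1,0}(2^{\infty})$ itself at index $1$, only $G_{2,a}, G_{2,b}$ at index $2$, and only $G_{4,a}, \dots, G_{4,d}$ at index $4$, and I would realize each surviving possibility explicitly by computing the $2$-adic image of a representative quartic twist $y^{2} = x^{3} + tx$ (the quartic twists being parametrized by $\QQ^{\times}/(\QQ^{\times})^{4}$ because $\operatorname{Aut}(E) = \mu_{4}$).

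For (ii), since $c \notin G_{\QQ(i)}$ and $\QQ(i) \subseteq \QQ(E[2^{\infty}])$, the element $\gamma = \rho_{E,2^{\infty}}(c)$ lies in the nontrivial coset $\mathcal{N}_{-1,0}(2^{\infty}) \setminus \mathcal{C}_{-1,0}(2^{\infty})$, so $G_{E,2^{\infty}} = \left\langle \gamma, G_{E,K,2^{\infty}}\right\rangle$ with $[G_{E,2^{\infty}} : G_{E,K,2^{\infty}}] = 2$. As $c^{2} = 1$ and $\chi_{\mathrm{cyc}}(c) = -1$, the matrix $\gamma$ is an involution of determinant $-1$ normalizing the Cartan; classifying such involutions in that coset modulo $4$ and invoking the full-lift rigidity of Corollary \ref{Theorem 1.6 corollary 1} (or a direct lifting argument from Lemma \ref{Hensel} applied on $E[4]$) yields $\gamma \equiv \gamma' \bmod 4$ for exactly one of $c_{1}, c_{-1}, c_{1}', c_{-1}'$, completing the description of $G_{E,2^{\infty}}$.

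The main obstacle is the exhaustiveness in step (i): because $j = 1728$ carries the extra automorphisms $\mu_{4}$, the clean rigidity of Theorem \ref{Theorem 1.6} (image equal to the full preimage of its reduction at a small level) is not available off the shelf, so one must prove the analogous statement by hand, showing that once $G_{E,K,2^{\infty}}$ contains the principal units at a small level it is forced to be the full preimage of its mod-$4$ image. Balancing the determinant (norm) constraint against the conjugation-stability constraint, in order to rule out the spurious index-$2$ and index-$4$ subgroups and keep precisely the listed ones, is the delicate bookkeeping at the heart of the argument.
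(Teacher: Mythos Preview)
The paper does not prove Theorem \ref{Theorem 1.7}: it is quoted verbatim from \cite{al-rCMGRs} as background in Section \ref{sec-Alvaros work}, so there is no in-paper argument to compare against. Your outline is therefore being measured against Lozano-Robledo's original proof, not anything in this manuscript.

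As a sketch, your plan is broadly sound: the reduction to index $1$, $2$, or $4$ via Theorem \ref{Theorem 1.2}, the identification $\mathcal{C}_{-1,0}(2^{\infty}) \cong (\ZZ_{2}[i])^{\times}$, the determinant/norm constraint $\det(G_{E,K,2^{\infty}}) = 1+4\ZZ_{2}$ coming from $\QQ(i)=\QQ(\zeta_{4})$, and the stability under the normalizer involution are exactly the ingredients one expects. The description of $\gamma$ as a determinant-$(-1)$ involution in the nontrivial Cartan coset is also correct.

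There is, however, a genuine gap. You invoke Corollary \ref{Theorem 1.6 corollary 1} for the ``full-lift rigidity'' needed to pass from a mod-$4$ classification of $\gamma$ to the $2$-adic one, but that corollary (and Theorem \ref{Theorem 1.6} on which it rests) explicitly assumes $j_{E}\neq 0,1728$; it cannot be cited here. You acknowledge this in your final paragraph, but the workaround you propose --- proving by hand that $G_{E,K,2^{\infty}}$ is the full preimage of its mod-$4$ image once it contains enough principal units --- is precisely the nontrivial content of the theorem at $j=1728$, and your sketch does not supply it. In Lozano-Robledo's treatment this is handled by a careful analysis specific to the extra automorphisms $\mu_{4}$ (the quartic-twist parametrization you mention), and without that analysis the exhaustiveness claim in step (i) and the mod-$4$ reduction in step (ii) both remain unproved. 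The ``balancing'' of norm and conjugation constraints you allude to also needs to be carried out explicitly: for instance, one must check that no index-$2$ subgroup other than $G_{2,a}$, $G_{2,b}$ satisfies both constraints, and your outline does not do this computation.
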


\begin{theorem}[Theorem 1.8, \cite{al-rCMGRs}]\label{Theorem 1.8}

Let $E/\QQ$ be an elliptic curve with $\textit{j}_{E} = 0$, and let $c \in G_{\QQ}$ represent complex conjugation. Let $G_{E,2^{\infty}}$ be the image of $\rho_{E,2^{\infty}}$ and let $G_{E,K,2^{\infty}} = \rho_{E,2^{\infty}}(G_{\QQ\left(\sqrt{-3}\right)})$. Then there is a $\ZZ_{2}$-basis of $T_{2}(E)$ such that the image $G_{E,2^{\infty}}$ of $\rho_{E,2^{\infty}}$ is one of the following groups of $\operatorname{GL}(2, \ZZ_{2})$, with $\gamma = \rho_{E,2^{\infty}}(c)$.
\begin{itemize}
    \item Either, $[\mathcal{C}_{-1,1}(2^{\infty}) : G_{E,K,2^{\infty}}] = 3$, and
    $$G_{E,2^{\infty}} = \left\langle \gamma', \operatorname{-Id}, \begin{bmatrix} 7 & 4 \\ -4 & 3 \end{bmatrix}, \begin{bmatrix} 3 & 6 \\ -6 & -3 \end{bmatrix} \right\rangle$$
    $$= \left\langle \gamma', \left\{ \begin{bmatrix} a + b & b \\ -b & a \end{bmatrix} \in \operatorname{GL}(2, \ZZ_{2}) : a \not \equiv 0 \mod 2, b \equiv 0 \mod 2 \right\} \right\rangle,$$
    and $\left\{ \begin{bmatrix} a + b & b \\ -b & a \end{bmatrix} \in \operatorname{GL}(2, \ZZ_{2}) : a \not \equiv 0 \mod 2, b \equiv 0 \mod 2 \right\}$ is precisely the set of matrices that correspond to the subgroup of cubes of Cartan elements $\mathcal{C}_{-1,1}(2^{\infty})^{3}$ which is the unique group of index $3$ in $\mathcal{C}_{-1,1}(2^{\infty})$.
    \item Or, $[\mathcal{C}_{-1,1}(2^{\infty}) : G_{E,K,2^{\infty}}] = 1$, and
    $$G_{E,2^{\infty}} = \mathcal{N}_{-1,1}(2^{\infty}) = \left\langle \gamma', \operatorname{-Id}, \begin{bmatrix} 7 & 4 \\ -4 & 3 \end{bmatrix}, \begin{bmatrix} 2 & 1 \\ -1 & 1 \end{bmatrix} \right\rangle$$
    $$= \left\langle \gamma', \left\{\begin{bmatrix} a + b & b \\ -b & a \end{bmatrix} \in \operatorname{GL}(2, \ZZ_{2}) : a \not \equiv 0 \mod 2 \texttt{or} b \not \equiv 0 \mod 2 \right\} \right\rangle$$
    where $\gamma' \in \left\{ \begin{bmatrix} 0 & 1 \\ 1 & 0 \end{bmatrix}, \begin{bmatrix} 0 & -1 \\ -1 & 0 \end{bmatrix} \right\}$, and $\gamma \equiv \gamma' \mod 4$.
\end{itemize}

\end{theorem}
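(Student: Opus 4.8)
\emph{Setup.} Since $j_E=0$, the curve $E$ has complex multiplication by the maximal order $\ZZ[\zeta_3]$ of $K=\QQ(\sqrt{-3})$, so in the notation of Theorems~\ref{Theorem 1.1} and~\ref{Theorem 1.2} one has $f=1$, $\Delta_K=-3\equiv 1\bmod 4$, hence $\delta=-1$ and $\phi=1$. The plan is to fix a compatible system of bases of the $E[2^n]$ as in those theorems, so that $G_{E,2^\infty}:=\rho_{E,2^\infty}(G_\QQ)\subseteq\mathcal{N}_{-1,1}(2^\infty)$ with index dividing $|\ZZ[\zeta_3]^\times|=6$, and then to separate the Cartan part from the complex conjugation part. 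For any Weierstrass model $y^2=x^3+a$ of $E$ we have $\operatorname{disc}(x^3+a)=-27a^2$, so $K=\QQ(\sqrt{\operatorname{disc}})\subseteq\QQ(E[2])$; thus $G_{E,K,2^\infty}:=\rho_{E,2^\infty}(G_K)$ has index $2$ in $G_{E,2^\infty}$, and, because the endomorphism $[\zeta_3]$ is $K$-rational, $G_{E,K,2^\infty}\subseteq\mathcal{C}_{-1,1}(2^\infty)$ while $\gamma:=\rho_{E,2^\infty}(c)\notin\mathcal{C}_{-1,1}(2^\infty)$ for $c$ a complex conjugation. Hence $G_{E,2^\infty}=\langle\gamma,G_{E,K,2^\infty}\rangle$, $G_{E,K,2^\infty}=G_{E,2^\infty}\cap\mathcal{C}_{-1,1}(2^\infty)$, the common index $[\mathcal{C}_{-1,1}(2^\infty):G_{E,K,2^\infty}]=[\mathcal{N}_{-1,1}(2^\infty):G_{E,2^\infty}]$ divides $6$, and conjugation by $\gamma$ induces on $\mathcal{C}_{-1,1}(2^\infty)\cong(\ZZ_2[\zeta_3])^\times$ the nontrivial automorphism $u\mapsto\bar u$ of $K$.

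\emph{The key step.} I would prove that $G_{E,K,2^\infty}\supseteq\mathcal{C}_{-1,1}(2^\infty)^3=1+2\ZZ_2[\zeta_3]$, i.e.\ the matrices $\left(\begin{smallmatrix}a+b&b\\-b&a\end{smallmatrix}\right)$ with $a$ odd and $b$ even. Since $[\mathcal{C}_{-1,1}(2^\infty):\mathcal{C}_{-1,1}(2^\infty)^3]=3$ is prime, this forces $G_{E,K,2^\infty}$ to be either all of $\mathcal{C}_{-1,1}(2^\infty)$ (Cartan-index $1$) or exactly $\mathcal{C}_{-1,1}(2^\infty)^3$ (Cartan-index $3$); in particular $-\operatorname{Id}\in 1+2\ZZ_2[\zeta_3]\subseteq G_{E,K,2^\infty}$ always. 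To establish the containment I would invoke the theory of complex multiplication: $\rho_{E,2^\infty}|_{G_K}$ is the $2$-adic avatar of a Hecke character $\psi_E$ of $K$, and since $K$ has class number one the image $G_{E,K,2^\infty}\subseteq(\ZZ_2[\zeta_3])^\times$ can be non-maximal only through the action of $\ZZ[\zeta_3]^\times=\mu_6$ modulo the prime-to-$2$ conductor of $\psi_E$, which (as $j_E=0$) is supported only at the prime $\mathfrak{q}\mid 3$; combined with the fact that the determinant is the cyclotomic character, which is surjective onto $\ZZ_2^\times$ on $G_K$ because $K\cap\QQ(\zeta_{2^\infty})=\QQ$, and with a local analysis at the prime $\mathfrak{p}=(2)$ (where $E$ has potentially good \emph{supersingular} reduction, so the image of the decomposition group already exhausts the pro-$2$ part $1+2\ZZ_2[\zeta_3]$), the only possible defect is the Teichm\"uller quotient $\mathcal{C}_{-1,1}(2^\infty)/\mathcal{C}_{-1,1}(2^\infty)^3\cong\mu_3$. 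An alternative, more computational route is to verify the containment on explicit representatives (for instance the conductor-$27$ curve $y^2+y=x^3$ and the conductor-$36$ curves $y^2=x^3\pm 1$) through their $2$-, $4$- and $8$-division fields, and then propagate to every $j=0$ curve using quadratic twists (harmless once $-\operatorname{Id}$ is known to lie in the image, by the twisting lemmas of Section~\ref{sec-background}) and cubic twists (whose twisting character is $\mu_3$-valued, hence moves the image only between $\mathcal{C}_{-1,1}(2^\infty)^3$ and $\mathcal{C}_{-1,1}(2^\infty)$). Finally, the two cases are distinguished intrinsically: $G_{E,K,2^\infty}=\mathcal{C}_{-1,1}(2^\infty)^3$ exactly when $\overline\rho_{E,2}(G_K)=\{\operatorname{Id}\}$, i.e.\ when $\QQ(E[2])=K$, i.e.\ when $E$ admits a model $y^2=x^3-m^3$ with the rational $2$-torsion point $(m,0)$ — equivalently when its isogeny graph is of $R_4(6)$-type — while the index-$1$ case collects the $L_4$ and $L_2(3)$ curves.

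\emph{The complex conjugation coset.} The element $\gamma$ lies in $\mathcal{N}_{-1,1}(2^\infty)\setminus\mathcal{C}_{-1,1}(2^\infty)$, has order $2$ and determinant $-1$, and conjugates $\mathcal{C}_{-1,1}(2^\infty)$ by $u\mapsto\bar u$. Reducing modulo $4$ — and using that $-\operatorname{Id}\in G_{E,K,2^\infty}$, and that, since $8\nmid\Delta_K f^2=-3$, $G_{E,K,2^\infty}$ is the full preimage inside the Cartan of its reduction modulo $4$ (Corollary~\ref{Theorem 1.6 corollary 1}) — one checks that $\gamma\equiv\gamma'\bmod 4$ for some $\gamma'\in\{\left(\begin{smallmatrix}0&1\\1&0\end{smallmatrix}\right),\left(\begin{smallmatrix}0&-1\\-1&0\end{smallmatrix}\right)\}$; the diagonal involutions $\left(\begin{smallmatrix}\pm 1&0\\0&\mp 1\end{smallmatrix}\right)$ are excluded because conjugation by them does not act as $u\mapsto\bar u$ on $\mathcal{C}_{-1,1}$. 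Then $G_{E,2^\infty}=\langle\gamma',G_{E,K,2^\infty}\rangle$ is exactly one of the two groups in the statement; inserting explicit generators — $\left(\begin{smallmatrix}7&4\\-4&3\end{smallmatrix}\right)$ together with $\left(\begin{smallmatrix}2&1\\-1&1\end{smallmatrix}\right)$ for the full Cartan, and together with $\left(\begin{smallmatrix}3&6\\-6&-3\end{smallmatrix}\right)$ for $\mathcal{C}_{-1,1}(2^\infty)^3$ — yields the displayed presentations.

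\emph{Main obstacle.} The hard part is the key step: establishing $G_{E,K,2^\infty}\supseteq\mathcal{C}_{-1,1}(2^\infty)^3$, and hence ruling out Cartan-indices $2$ and $6$. This is precisely where the input from complex multiplication is indispensable — either a careful bookkeeping of the conductor of the Hecke character of a $j=0$ elliptic curve over $\QQ$ together with the action of the roots of unity $\mu_6$, or a Lubin--Tate computation at the prime above $2$ using the potentially-good supersingular reduction; in the latter, the essential point is that $E$ is defined over $\QQ$ (not merely over $K$), which is what forbids a twist at $2$ that would remove $-\operatorname{Id}$ from the image. Everything else — the Cartan containment and the divides-$6$ bound from Theorems~\ref{Theorem 1.1} and~\ref{Theorem 1.2}, the primality squeeze $\mathcal{C}_{-1,1}(2^\infty)^3\subseteq G_{E,K,2^\infty}\subseteq\mathcal{C}_{-1,1}(2^\infty)$, and the modulo-$4$ analysis of $\gamma$ via Corollary~\ref{Theorem 1.6 corollary 1} — is formal.
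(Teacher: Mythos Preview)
This theorem is not proved in the present paper: it is quoted verbatim as Theorem~1.8 of \cite{al-rCMGRs} (Lozano-Robledo's classification), and the paper uses it only as a black box, most visibly through Corollary~\ref{points of order 2 with j=0}. There is therefore no proof here to compare your proposal against; the argument lives entirely in the cited reference.

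That said, your sketch has a genuine gap. In the ``complex conjugation coset'' paragraph you invoke Corollary~\ref{Theorem 1.6 corollary 1} to say that $G_{E,K,2^\infty}$ is the full preimage of its mod-$4$ reduction, but that corollary (and Theorem~\ref{Theorem 1.6} behind it) carries the explicit hypothesis $j_E\neq 0,1728$. The whole reason Theorems~\ref{Theorem 1.7} and~\ref{Theorem 1.8} exist as separate statements in \cite{al-rCMGRs} is that the extra automorphisms at $j=1728$ and $j=0$ break the uniform argument of Theorem~\ref{Theorem 1.6}; you cannot import the generic-$j$ machinery to handle $j=0$ without circularity. So the reduction-mod-$4$ control of $\gamma$ needs an independent argument specific to $j=0$. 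Your identification of the ``main obstacle''---showing $G_{E,K,2^\infty}\supseteq\mathcal{C}_{-1,1}(2^\infty)^3$ via the Hecke character or a local analysis at $2$, thereby ruling out Cartan indices $2$ and $6$---is indeed where the substance lies, and your overall shape (split into Cartan part plus conjugation coset, squeeze the index using $|\mu_6|$) is the right architecture; but both the key step and the $\gamma$-analysis must be carried out without leaning on the $j\neq 0,1728$ results.
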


\begin{corollary}\label{points of order 2 with j=0}
Let $E/\QQ$ be an elliptic curve with $\textit{j}_{E} = 0$. Then $E$ has a point of order $2$ defined over $\QQ$ if and only if $\rho_{E,2^{\infty}}(G_{\QQ})$ is conjugate to $\left\langle \operatorname{-Id}, \begin{bmatrix} 0 & 1 \\ 1 & 0 \end{bmatrix}, \begin{bmatrix} 7 & 4 \\ -4 & 3 \end{bmatrix}, \begin{bmatrix} 3 & 6 \\ -6 & -3 \end{bmatrix} \right\rangle$ and $E$ does not have a point of order $2$ defined over $\QQ$ if and only if $\rho_{E,2^{\infty}}(G_{\QQ})$ is conjugate to $\mathcal{N}_{-1,1}(2^{\infty})$.
\end{corollary}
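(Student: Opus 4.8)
The plan is to apply Theorem~\ref{Theorem 1.8}, which says that for $E/\QQ$ with $\textit{j}_{E} = 0$ the image $\rho_{E,2^\infty}(G_\QQ)$ is, up to conjugacy in $\operatorname{GL}(2,\ZZ_2)$, exactly one of the two groups
$$\mathcal{H} := \left\langle \operatorname{-Id}, \begin{bmatrix} 0 & 1 \\ 1 & 0 \end{bmatrix}, \begin{bmatrix} 7 & 4 \\ -4 & 3 \end{bmatrix}, \begin{bmatrix} 3 & 6 \\ -6 & -3 \end{bmatrix} \right\rangle \qquad \text{and} \qquad \mathcal{N}_{-1,1}(2^\infty),$$
and then to separate these two cases by the reduction modulo $2$. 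The elementary fact to record first is that $E$ has a point of order $2$ defined over $\QQ$ if and only if $\overline{\rho}_{E,2}(G_\QQ)$ fixes a nonzero vector of $(\ZZ/2\ZZ)^2$; and since conjugating a subgroup of $\operatorname{GL}(2,\ZZ/2\ZZ)$ merely transports any fixed vector along, this ``has a common nonzero fixed vector'' property depends only on the conjugacy class of the mod-$2$ image, which is the reduction modulo $2$ of $\rho_{E,2^\infty}(G_\QQ)$. So it suffices to compute this reduction for each of $\mathcal{H}$ and $\mathcal{N}_{-1,1}(2^\infty)$.

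Next I would carry out the two reductions. For $\mathcal{H}$: the generators $\operatorname{-Id}$, $\left[\begin{smallmatrix} 7 & 4 \\ -4 & 3 \end{smallmatrix}\right]$ and $\left[\begin{smallmatrix} 3 & 6 \\ -6 & -3 \end{smallmatrix}\right]$ all reduce to the identity modulo $2$, so the mod-$2$ image is the order-$2$ group $\left\langle \left[\begin{smallmatrix} 0 & 1 \\ 1 & 0 \end{smallmatrix}\right]\right\rangle$, which fixes the nonzero vector $(1,1)$; hence any $E$ with $\rho_{E,2^\infty}(G_\QQ)$ conjugate to $\mathcal{H}$ has a rational point of order $2$. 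For $\mathcal{N}_{-1,1}(2^\infty)$: the Cartan part $\mathcal{C}_{-1,1}(2^\infty)$, consisting of matrices $\left[\begin{smallmatrix} a+b & b \\ -b & a \end{smallmatrix}\right]$, reduces to the cyclic group of order $3$ generated by $\left[\begin{smallmatrix} 1 & 1 \\ 1 & 0 \end{smallmatrix}\right]$, which permutes the three nonzero vectors of $(\ZZ/2\ZZ)^2$ cyclically and so has no fixed nonzero vector; adjoining the reduction $\left[\begin{smallmatrix} 1 & 0 \\ 1 & 1 \end{smallmatrix}\right]$ of the extra generator $\left[\begin{smallmatrix} -1 & 0 \\ 1 & 1 \end{smallmatrix}\right]$ of $\mathcal{N}_{-1,1}(2^\infty)$ gives all of $\operatorname{GL}(2,\ZZ/2\ZZ)$, which of course has no common fixed nonzero vector; hence any $E$ with $\rho_{E,2^\infty}(G_\QQ)$ conjugate to $\mathcal{N}_{-1,1}(2^\infty)$ has no rational point of order $2$.

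Putting these together with Theorem~\ref{Theorem 1.8} gives the stated equivalences: if $E$ has a point of order $2$ over $\QQ$, then $\rho_{E,2^\infty}(G_\QQ)$ cannot be conjugate to $\mathcal{N}_{-1,1}(2^\infty)$ by the second computation, so it is conjugate to $\mathcal{H}$, and conversely by the first computation; likewise for the negated statement. I do not expect a genuine obstacle here: the only point that needs care is the conjugation-invariance of the ``fixes a nonzero vector'' criterion, which is what licenses computing with the explicit generators of $\mathcal{H}$ and $\mathcal{N}_{-1,1}(2^\infty)$ rather than with an unknown conjugate, together with the routine verification that the two mod-$2$ reductions are, respectively, the order-$2$ group $\left\langle \left[\begin{smallmatrix} 0 & 1 \\ 1 & 0 \end{smallmatrix}\right]\right\rangle$ and all of $\operatorname{GL}(2,\ZZ/2\ZZ)$.
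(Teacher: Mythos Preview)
Your proposal is correct and follows essentially the same approach as the paper: invoke Theorem~\ref{Theorem 1.8} to reduce to two possible images, compute their reductions modulo $2$ (order $2$ versus order $6$, i.e.\ all of $\operatorname{GL}(2,\ZZ/2\ZZ)$), and distinguish the cases by whether the mod-$2$ image fixes a nonzero vector. The paper's proof is terser---it simply records the orders of the two mod-$2$ reductions and draws the conclusion---while you spell out the fixed-vector criterion and the generator-by-generator reduction, but the argument is the same.
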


\begin{proof}
From the fact that $\textit{j}_{E} = 0$, $\rho_{E,2^{\infty}}(G_{\QQ})$ is conjugate to one of two groups. The reduction of the group $\left\langle \operatorname{-Id}, \begin{bmatrix} 0 & 1 \\ 1 & 0 \end{bmatrix}, \begin{bmatrix} 7 & 4 \\ -4 & 3 \end{bmatrix}, \begin{bmatrix} 3 & 6 \\ -6 & -3 \end{bmatrix} \right\rangle$ modulo $2$ is a group of order $2$ and the reduction of $\mathcal{N}_{-1,1}(2^{\infty})$ modulo $2$ is a group of order $6$. Hence, if $\rho_{E,2^{\infty}}(G_{\QQ})$ is conjugate to the former, $E$ has a point of order $2$ defined over $\QQ$ and if $\rho_{E,2^{\infty}}(G_{\QQ})$ is conjugate to the latter, then $E$ does not have a point of order $2$ defined over $\QQ$.
\end{proof}

\section{$2$-adic Galois images attached to isogeny-torsion graphs with CM}\label{proofs}
Here we classify the $2$-adic Galois image attached to isogeny-torsion graphs defined over $\QQ$ with CM. We will categorize the proofs based first on isogeny-torsion graphs and then on \textit{j}-invariant.

\begin{proposition}
Let $E/\QQ$ be an elliptic curve with complex multiplication such that the isogeny graph associated to the $\QQ$-isogeny class of $E$ is of $L_{4}$ type or of $L_{2}(3)$ type. Then $\rho_{E,2^{\infty}}(G_{\QQ})$ is conjugate to $\mathcal{N}_{-1,1}(2^{\infty})$.
\end{proposition}

\begin{proof}
If the isogeny graph associated to the $\QQ$-isogeny class of $E$ is of $L_{4}$ type, then it looks like the one below with the \textit{j}-invariant of the corresponding elliptic curves listed:
\begin{center}
\begin{tikzcd}
{E_{1}, \textit{j}_{E_{1}} = -12288000} \arrow[rr, no head, "3"] &  & {E_{2}, \textit{j}_{E_{2} = 0}} \arrow[rr, no head, "3"] &  & {E_{3}, \textit{j}_{E_{3} = 0}} \arrow[rr, no head, "3"] &  & {E_{4}, \textit{j}_{E_{4}} = -12288000}
\end{tikzcd}
\end{center}
and if the isogeny graph associated to the $\QQ$-isogeny class of $E$ is of $L_{2}(3)$ type, then it looks like the one below with the \textit{j}-invariant of the corresponding elliptic curves listed:

\begin{center}
    \begin{tikzcd}
{E, \textit{j}_{E_{1} = 0}} \arrow[r, no head, "3"] & {E_{2}, \textit{j}_{E_{2} = 0}}
\end{tikzcd}
\end{center}
Let $E'/\QQ$ be an elliptic curve that is $3$-isogenous to $E$ with $\textit{j}_{E'} = 0$. By Corollary \ref{coprime isogeny-degree}, $\rho_{E,2^{\infty}}(G_{\QQ})$ is conjugate to $\rho_{E',2^{\infty}}(G_{\QQ})$. As $E'$ does not have a point of order $2$ defined over $\QQ$, Corollary \ref{points of order 2 with j=0} shows that $\rho_{E,2^{\infty}}(G_{\QQ})$ is conjugate to $\mathcal{N}_{-1,1}(2^{\infty})$. 
\end{proof}

\begin{proposition}
Let $E/\QQ$ be an elliptic curve with CM by a number field $K$ with discriminant $\Delta_{K}$. Suppose that the isogeny graph associated to the $\QQ$-isogeny class of $E$ is of $L_{2}(p)$ type with $p \in \{11, 19, 43, 67, 163\}$. Then $\rho_{E,2^{\infty}}(G_{\Q})$ is conjugate to $\mathcal{N}_{\frac{\Delta_{K}-1}{4},1}(2^{\infty})$.
\end{proposition}

\begin{proof}

The elliptic curve $E$ is $p$-isogenous to an elliptic curve $E'/\QQ$. Moreover, $\textit{j}_{E} = \textit{j}_{E'}$ and $\textit{j}_{E} \neq 0, 1728$, meaning that $E$ is a quadratic twist of $E'$. The isogeny graph associated to the $\QQ$-isogeny class of $E$ is of $L_{2}(p)$ type (see below).

\begin{center}
    \begin{tikzcd}
E \arrow[r, "p", no head] & E'
\end{tikzcd}
\end{center}

By Corollary \ref{coprime isogeny-degree}, $\rho_{E,2^{\infty}}(G_{\QQ})$ is conjugate to $\rho_{E',2^{\infty}}(G_{\QQ})$. We prove that $\rho_{E,2^{\infty}}(G_{\QQ})$ is unaffected by quadratic twisting by showing that $\operatorname{-Id}$ is an element of every subgroup of $\rho_{E,2^{\infty}}(G_{\QQ})$ of index $2$.

By Table \ref{tab-CMgraphs}, $\textit{j}_{E} \in \left\{-32768, -884736, -884736000, -147197952000, -262537412640768000 \right\}$. Each such elliptic curve has CM by a quadratic imaginary field $K$ of discriminant $\Delta_{K}$ ($ = -11$, $-19$, $-43$, $-67$, and $-163$, respectively). We take an example of each such elliptic curve $E'/\QQ$ with \textit{j}-invariant equal to one of the five above, namely, the elliptic curves with LMFDB labels \texttt{121.b1}, \texttt{361.a1}, \texttt{1849.b1}, \texttt{4489.b1}, and \texttt{26569.a1}, respectively. By the fact that $\textit{j}_{E} = \textit{j}_{E'}$ and $\textit{j}_{E} \neq 0, 1728$, $E$ is a quadratic twist of $E'$. Running code provided by Lozano-Robledo, we see that the conductor of each of the elliptic curves $E'$ is equal to $f = 1$. Hence, $\Delta_{K} \cdot f^{2} \equiv 1 \mod 4$ and so, $\delta = \frac{\Delta_{K}-1}{4}$ and $\phi = 1$.

By the fact that $\Delta_{K} \cdot f^{2}$ is not divisible by $8$, Corollary \ref{Theorem 1.6 corollary 2} shows that, $\rho_{E',2^{\infty}}(G_{\QQ})$ is conjugate to $\mathcal{N}_{\delta,1}(2^{\infty}) = \left\langle C_{\delta,1}(2^{\infty}), \begin{bmatrix} -1 & 0 \\ 1 & 1 \end{bmatrix} \right\rangle$ where $C_{\delta,1}(2^{\infty}) = \left\{ \begin{bmatrix} a+b & b \\ \delta b & a \end{bmatrix} \colon a,b \in \ZZ_{2} | \texttt{a and b not both even} \right\}$. Note that setting $a = -1$ and $b = 0$ shows that $\operatorname{-Id} \in \rho_{E',2^{\infty}}(G_{\Q})$ for each of the five elliptic curves. Let $H$ be a subgroup of $\rho_{E',2^{\infty}}(G_{\Q})$ of index $2$. Then $H$ is normal and hence, the squares of all elements of $\rho_{E',2^{\infty}}(G_{\Q})$ are contained in $H$. Let $a$ be an integer and let $b = 1$. Then
$$\left(\begin{bmatrix} -1 & 0 \\ 1 & 1 \end{bmatrix} \cdot \begin{bmatrix} a+1 & 1 \\ \delta & a \end{bmatrix}\right)^{2} = \begin{bmatrix} a^{2}+a-\delta & 0 \\ 0 & a^{2}+a - \delta \end{bmatrix}.$$
We have to show that we have an equality of the form $-1 = a^{2}+a-\delta$ modulo $2^{N}$ for all non-negative integers $N$. Let $p(x) = x^{2} + x -  \delta + 1$. Then $p(0) = 0$ in $\ZZ / 2 \ZZ$ because $\delta$ is odd and $p'(0) = 1 \neq 0 \mod 2$. By Hensel's lemma, there is a unique solution $\alpha \in \ZZ_{2}$ to the equality $x^{2} + x - \delta = -1$. Hence, $\operatorname{-Id}$ is a square in $\rho_{E',2^{\infty}}(G_{\Q})$ and so, $H$ contains $\operatorname{-Id}$. By Corollary \ref{subgroups of index 2 contain -Id} and Corollary \ref{coprime isogeny-degree}, $\rho_{E,2^{\infty}}(G_{\QQ})$ is conjugate to $\rho_{E',2^{\infty}}(G_{\QQ})$. Hence, $\rho_{E,2^{\infty}}(G_{\QQ})$ is conjugate to $\mathcal{N}_{\delta,1}(2^{\infty}) = \mathcal{N}_{\frac{\Delta_{K}-1}{4},1}(2^{\infty})$.
\end{proof}

\begin{proposition}
Let $E/\QQ$ be an elliptic curve such that the isogeny graph associated to the $\QQ$-isogeny class of $E$ is of $R_{4}(14)$ type. Then $\textit{j}_{E} = 16581375$ or $\textit{j}_{E} = -3375$. In the former case, $\rho_{E,2^{\infty}}(G_{\QQ})$ is conjugate to $\mathcal{N}_{-7,0}(2^{\infty})$ and in the latter case, $\rho_{E,2^{\infty}}(G_{\QQ})$ is conjugate to $\mathcal{N}_{-2,1}(2^{\infty})$.
\end{proposition}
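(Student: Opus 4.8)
The plan is to reduce the statement to a single application of Lozano-Robledo's Corollary~\ref{Theorem 1.6 corollary 2}; the only genuine bookkeeping is identifying the CM order attached to each of the two $\textit{j}$-invariants that can occur.

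First I would determine $\textit{j}_{E}$. By Theorem~\ref{thm-mainisogenygraphs} an isogeny graph of type $R_{4}(14)$ occurs only for elliptic curves with CM (concretely, $E$ carries a cyclic $\QQ$-rational $14$-isogeny, hence a non-cuspidal point of $\operatorname{X}_{0}(14)(\QQ)$, which is finite by Theorem~\ref{thm-ratnoncusps}), so $E$ has CM; and by the classification recorded in Table~\ref{tab-CMgraphs} (see \cite{gcal-r}), the only rational $\textit{j}$-invariants of CM curves over $\QQ$ whose associated isogeny graph is of type $R_{4}(14)$ are $\textit{j} = 16581375$ and $\textit{j} = -3375$, both belonging to the isogeny class \texttt{49.a} with $K = \QQ(\sqrt{-7})$. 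In particular $\textit{j}_{E}\in\{16581375,-3375\}$, and in either case $\textit{j}_{E}\neq 0,1728$.

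Next I would treat the two cases. If $\textit{j}_{E}=16581375$, then $E$ has CM by the order of $K=\QQ(\sqrt{-7})$ of conductor $f=2$ (discriminant $\Delta_{K}f^{2}=-28$); this is the relevant order because $-7$ and $-28$ are the only class-number-one discriminants for $\QQ(\sqrt{-7})$, and the conductor of a representative such as \texttt{49.a1} may also be read off directly, exactly as in the proof of the preceding proposition. Since $-28\equiv 0\bmod 4$, Theorem~\ref{Theorem 1.1} gives $\delta=\Delta_{K}f^{2}/4=-7$ and $\phi=0$; as $8\nmid 28$ and $\textit{j}_{E}\neq 0,1728$, Corollary~\ref{Theorem 1.6 corollary 2} applies and yields that $\rho_{E,2^{\infty}}(G_{\QQ})$ is conjugate to $\mathcal{N}_{\delta,\phi}(2^{\infty})=\mathcal{N}_{-7,0}(2^{\infty})$. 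If instead $\textit{j}_{E}=-3375$, then $E$ has CM by the maximal order of $K=\QQ(\sqrt{-7})$, so $f=1$ and $\Delta_{K}=-7\equiv 1\bmod 4$, whence $\delta=(\Delta_{K}-1)/4=-2$ and $\phi=1$; since $8\nmid 7$ and $\textit{j}_{E}\neq 0,1728$, Corollary~\ref{Theorem 1.6 corollary 2} gives that $\rho_{E,2^{\infty}}(G_{\QQ})$ is conjugate to $\mathcal{N}_{-2,1}(2^{\infty})$. (Note this is independent of the quadratic twist, since Corollary~\ref{Theorem 1.6 corollary 2} concludes for \emph{any} $E/\QQ$ with CM by the given order.)

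I do not expect any real obstacle. The crucial simplification is that in both cases $8\nmid\Delta_{K}f^{2}$, so the finer dichotomy of Theorem~\ref{Theorem 1.6} (which distinguishes the subgroups $J_{1},J_{2}$ when $8\mid\Delta_{K}f^{2}$) never enters, and Corollary~\ref{Theorem 1.6 corollary 2} can be invoked verbatim. The only step that is not a direct quotation of an earlier result is matching $\textit{j}=16581375$ and $\textit{j}=-3375$ to the discriminant-$(-28)$ and discriminant-$(-7)$ orders respectively; this is classical and, if desired, can be confirmed by running Lozano-Robledo's code on \texttt{49.a1} and \texttt{49.a2} exactly as in the previous proposition.
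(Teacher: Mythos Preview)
Your proof is correct and uses the same key input as the paper, namely Corollary~\ref{Theorem 1.6 corollary 2}. The one noteworthy difference is that the paper takes a more roundabout path: it first applies Corollary~\ref{Theorem 1.6 corollary 2} to a \emph{specific} curve (\texttt{49.a1} for $j=16581375$), and then spends effort showing, via the generalized Hensel lemma, that $-\operatorname{Id}$ lies in every index-$2$ subgroup of $\mathcal{N}_{-7,0}(2^{\infty})$, so that quadratic twisting cannot change the image; the case $j=-3375$ is then handled by pulling $-\operatorname{Id}$ across the $2$-isogeny via Corollary~\ref{contains -Id}. You bypass this entirely by observing that the CM order depends only on the $j$-invariant, so Corollary~\ref{Theorem 1.6 corollary 2} applies directly to \emph{every} $E/\QQ$ with the given $j$. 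This is a cleaner route; the paper's Hensel/twist argument is logically superfluous once one states Corollary~\ref{Theorem 1.6 corollary 2} at the level of generality it actually has.
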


\begin{proof}
Let $E/\QQ$ be an elliptic curve that has a cyclic, $\QQ$-rational subgroup of order $14$. Then the isogeny graph associated to the $\QQ$-isogeny class of $E$ is below:
\begin{center}
\begin{tikzcd}
{E_{1}, \textit{j}_{E_{1}} = 16581375} \arrow[dd, no head, "7"'] \arrow[rr, no head, "2"] &  & {E_{2}, \textit{j}_{E_{2}} = -3375} \arrow[dd, no head, "7"] \\
                                                                     &  &                                                        \\
{E_{3}, \textit{j}_{E_{3}} = 16581375} \arrow[rr, no head, "2"']                 &  & {E_{4}, \textit{j}_{E_{4}} = -3375}               
\end{tikzcd}
\end{center}
Let $E/\QQ$ be the elliptic curve with LMFDB notation \texttt{49.a1}. Then $\textit{j}_{E} = 16581375$. Running code provided by Lozano-Robledo, we see that $E$ has complex multiplication by an order of $K = \QQ(\sqrt{-7})$ with discriminant $\Delta_{K} = -7$ and conductor $f = 2$. Then $\delta = \frac{-7 \cdot 2^{2}}{4} = -7$. By Corollary \ref{Theorem 1.6 corollary 2}, $\rho_{E,2^{\infty}}(G_{\Q})$ is conjugate to $\mathcal{N}_{-7,0}(2^{\infty}) = \left\langle C_{-7,0}(2^{\infty}), \begin{bmatrix} -1 & 0 \\ 0 & 1 \end{bmatrix} \right\rangle$ where $C_{-7,0}(2^{\infty}) = \left\langle \begin{bmatrix} a & b \\ -7b & a \end{bmatrix} \colon 2 \nmid a^{2}+b^{2} \right\rangle$.
We will prove that $\operatorname{-Id}$ is contained in all subgroups of $\mathcal{N}_{-7,0}(2^{\infty})$ of index $2$. Let $a = 0$ and let $b$ be an integer. Then
$$\left(\begin{bmatrix} -1 & 0 \\ 0 & 1 \end{bmatrix} \cdot \begin{bmatrix} 0 & b \\ -7b & 0 \end{bmatrix}\right)^{2} = \begin{bmatrix} 7b^{2} & 0 \\ 0 & 7b^{2} \end{bmatrix}.$$
We have to prove that for each positive integer $N$, there is a solution to the equation $-1 = 7b^{2}$.

Let $p = 2$ and consider the polynomial $f(x) = 7x^{2}+1$. We will use Lemma \ref{Hensel}. Let $a = 1$. Then $f(a) = 8 \equiv 0 \mod 2^{3}$. Next we have $f'(x) = 14x$ and $f'(a) = 14$. Letting $\tau = 1$, we have $2^{1} \lvert \rvert f'(a)$. Setting $j = 3$, we have that $j \geq 2\tau + 1$. So there is a unique integer $t$ modulo $2$ such that $f(1+t \cdot 2^{2}) \equiv 0 \mod 2^{4}$.

Now let $a_{1} = 1 + t \cdot 2^{2}$ and let $j = 4$. Then $f(a_{1}) \equiv 0 \mod 2^{4}$. Next, we have that because $a_{1}$ is odd and $f'(x) = 14x$, that $2^{1} \lvert \rvert f'(a_{1})$. Thus, $j \geq 2 \cdot \tau + 1$ and by Lemma \ref{Hensel}, there is a unique integer $t$ modulo $2$, such that $f(a_{1}+t \cdot 2^{3}) \equiv 0 \mod 2^{5}$. We can continue using Lemma \ref{Hensel} inductively until we find a $2$-adic integer $A = a + a_{1} + \ldots$ such that $f(A) = 0 \mod 2^{N}$ for all positive integers $N$. Thus, $\operatorname{-Id}$ is an element of all subgroups of $\mathcal{N}_{-7,0}(2^{\infty})$ of index $2$. Thus, quadratic twisting does not affect $\rho_{E,2^{\infty}}(G_{\Q})$. By Corollary \ref{subgroups of index 2 contain -Id}, if $\textit{j}_{E} = 16581375$, then $\rho_{E,2^{\infty}}(G_{\Q})$ is conjugate to $\mathcal{N}_{-7,0}(2^{\infty})$.

Let $E'/\QQ$ be an elliptic curve with $\textit{j}_{E'} = -3375$. Then $E'$ is $2$-isogenous to an elliptic curve with \textit{j}-invariant equal to $16581375$. By Corollary \ref{contains -Id}, $\rho_{E',2^{\infty}}(G_{\Q})$ contains $\operatorname{-Id}$ and hence, $\rho_{E',2^{\infty}}(G_{\Q})$ is not affected by quadratic twisting. Using code provided by Lozano-Robledo, we see that $E'$ has complex multiplication by an order of $K = \QQ(\sqrt{-7})$ with discriminant $\Delta_{K} = -7$ and conductor $f = 1$. By the fact that $\Delta_{K} \cdot f^{2} \equiv 1 \mod 4$, we let $\delta = \frac{\Delta_{K}-1}{4} \cdot f^{2} = -2$. Again, by the fact that $\Delta_{K} \cdot f^{2}$ is not divisible by $8$, Corollary \ref{Theorem 1.6 corollary 2} says that $\rho_{E',2^{\infty}}(G_{\Q})$ is conjugate to $\mathcal{N}_{-2,1}(2^{\infty})$.
\end{proof}

\begin{proposition}
Let $E/\QQ$ be an elliptic curve such that $E$ has CM. Suppose that the isogeny graph associated to the $\QQ$-isogeny class of $E$ is of $R_{4}(6)$ type. Then $\textit{j}_{E} = 0$ or $\textit{j}_{E} = 54000$. In the former case, $\rho_{E,2^{\infty}}(G_{\QQ})$ is conjugate to
$\left\langle \operatorname{-Id}, \begin{bmatrix} 0 & 1 \\ 1 & 0 \end{bmatrix}, \begin{bmatrix} 7 & 4 \\ -4 & 3 \end{bmatrix}, \begin{bmatrix} 3 & 6 \\ -6 & -3 \end{bmatrix} \right\rangle$ and in the latter case, $\rho_{E,2^{\infty}}(G_{\QQ})$ is conjugate to $\mathcal{N}_{-3,0}(2^{\infty})$.
\end{proposition}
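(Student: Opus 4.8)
The plan is to use Table~\ref{tab-CMgraphs} to reduce to two $j$-invariants and then dispatch each one with the appropriate result from Section~\ref{sec-Alvaros work}. Since the $R_4(6)$ type occurs only in the CM setting (Theorem~\ref{thm-mainisogenygraphs}), Table~\ref{tab-CMgraphs} lists all rational $j$-invariants whose isogeny-torsion graph is of $R_4(6)$ type: these are exactly $j = 0$ and $j = 54000$, both attached to $d_K = -3$. Hence $j_E = 0$ or $j_E = 54000$, and these are disjoint cases.

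\textbf{Case $j_E = 54000$.} Here $E$ has CM by the order of conductor $f = 2$ in $K = \QQ(\sqrt{-3})$ (the standard CM datum for $j = 54000$; this can also be confirmed with Lozano-Robledo's code, as in the $R_4(14)$ proof). Then $\Delta_K = -3$, so $\Delta_K \cdot f^2 = -12 \equiv 0 \mod 4$, and we take $\delta = \frac{\Delta_K f^2}{4} = -3$ and $\phi = 0$. Since $-12$ is not divisible by $8$ and $j_E \neq 0, 1728$, Corollary~\ref{Theorem 1.6 corollary 2} applies verbatim and gives that $\rho_{E,2^\infty}(G_\QQ)$ is conjugate to $\mathcal{N}_{\delta,\phi}(2^\infty) = \mathcal{N}_{-3,0}(2^\infty)$, as claimed. (As a consistency check, the mod-$2$ reduction of $\mathcal{N}_{-3,0}(2^\infty)$ is $\left\{\operatorname{Id}, \begin{bmatrix} 0 & 1 \\ 1 & 0 \end{bmatrix}\right\}$, which fixes a nonzero vector, matching the torsion $[2]$ or $[6]$ occurring at such a vertex.)

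\textbf{Case $j_E = 0$.} By Corollary~\ref{points of order 2 with j=0}, $\rho_{E,2^\infty}(G_\QQ)$ is conjugate to $\left\langle \operatorname{-Id}, \begin{bmatrix} 0 & 1 \\ 1 & 0 \end{bmatrix}, \begin{bmatrix} 7 & 4 \\ -4 & 3 \end{bmatrix}, \begin{bmatrix} 3 & 6 \\ -6 & -3 \end{bmatrix} \right\rangle$ precisely when $E$ has a point of order $2$ defined over $\QQ$, and to $\mathcal{N}_{-1,1}(2^\infty)$ otherwise; so it suffices to produce a rational point of order $2$ on $E$. From Table~\ref{tab-CMgraphs}, every $j = 0$ curve whose isogeny-torsion graph is of $R_4(6)$ type is $\QQ$-isomorphic to $y^2 = x^3 + t^3$ for some nonzero integer $t$, which carries the rational $2$-torsion point $(-t, 0)$. (Alternatively, and independently of explicit models: the $R_4(6)$ graph has a degree-$2$ edge at each vertex, so $E$ admits a $\QQ$-rational isogeny $\phi$ of degree $2$ with cyclic kernel; then $\operatorname{Ker}(\phi) = \{\mathcal{O}, P\}$ with $P \in E[2]$, and $\sigma(P) \in \operatorname{Ker}(\phi) \setminus \{\mathcal{O}\} = \{P\}$ for every $\sigma \in G_\QQ$, so $P \in E(\QQ)$ has order $2$.) Hence $\rho_{E,2^\infty}(G_\QQ)$ is conjugate to $\left\langle \operatorname{-Id}, \begin{bmatrix} 0 & 1 \\ 1 & 0 \end{bmatrix}, \begin{bmatrix} 7 & 4 \\ -4 & 3 \end{bmatrix}, \begin{bmatrix} 3 & 6 \\ -6 & -3 \end{bmatrix} \right\rangle$, completing the proof.

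Both cases thus close with essentially no computation. Unlike the $L_2(p)$ and $R_4(14)$ propositions, no Hensel-lemma argument or quadratic-twist bookkeeping is needed, since Corollary~\ref{Theorem 1.6 corollary 2} and Corollary~\ref{points of order 2 with j=0} already pin down the image outright; the only mild points of care are verifying the CM datum of $j = 54000$ together with the hypotheses of Corollary~\ref{Theorem 1.6 corollary 2} (the non-$1728$, non-$8 \mid \Delta_K f^2$ regime), and exhibiting the rational $2$-torsion point in the $j = 0$ case to discard the $\mathcal{N}_{-1,1}(2^\infty)$ alternative. I do not anticipate a genuine obstacle here.
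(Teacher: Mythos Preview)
Your proof is correct and follows essentially the same approach as the paper: reduce to the two $j$-invariants via Table~\ref{tab-CMgraphs}, handle $j_E=0$ via Corollary~\ref{points of order 2 with j=0} after exhibiting a rational $2$-torsion point, and handle $j_E=54000$ via Corollary~\ref{Theorem 1.6 corollary 2} with the CM datum $(\Delta_K,f)=(-3,2)$. The one small difference is that for $j_E=54000$ the paper takes a short detour---it first shows $-\operatorname{Id}$ lies in the image (via the $2$-isogenous $j=0$ curve and Corollary~\ref{contains -Id}), computes the image for the single curve \texttt{36.a1}, and then transfers to $E$ by quadratic twist---whereas you apply Corollary~\ref{Theorem 1.6 corollary 2} directly to $E$; your route is a legitimate streamlining, since the hypotheses of that corollary depend only on the CM order and hence only on $j$.
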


\begin{proof}
The isogeny graph associated to the $\QQ$-isogeny class of $E$ is below 
\begin{center}
\begin{tikzcd}
{E_{1}, \textit{j}_{E_{1}} = 0} \arrow[dd, no head, "3"'] \arrow[rr, no head, "2"] &  & {E_{2}, \textit{j}_{E_{2}} = 54000} \arrow[dd, no head, "3"] \\
                                                                     &  &                                                        \\
{E_{3}, \textit{j}_{E_{3}} = 0} \arrow[rr, no head, "2"']                 &  & {E_{4}, \textit{j}_{E_{4}} = 54000}               
\end{tikzcd}
\end{center}
The elliptic curve $E$ has a point of order $2$ defined over $\QQ$. If $\textit{j}_{E} = 0$, then by Corollary \ref{points of order 2 with j=0}, $\rho_{E,2^{\infty}}(G_{\QQ})$ is conjugate to $\left\langle \operatorname{-Id}, \begin{bmatrix} 0 & 1 \\ 1 & 0 \end{bmatrix}, \begin{bmatrix} 7 & 4 \\ -4 & 3 \end{bmatrix}, \begin{bmatrix} 3 & 6 \\ -6 & -3 \end{bmatrix} \right\rangle$. If $\textit{j}_{E} = 54000$, then $E$ is $2$-isogenous to an elliptic curve $E'/\QQ$ such that $\textit{j}_{E} = 0$. Note that $\operatorname{-Id} \in \rho_{E',2^{\infty}}(G_{\QQ})$ and by Corollary \ref{contains -Id}, $\operatorname{-Id} \in \rho_{E,2^{\infty}}(G_{\QQ})$. Thus, quadratic twisting does not affect $\rho_{E,2^{\infty}}(G_{\QQ})$. 

Let $\widetilde{E}$ be the elliptic curve with LMFDB label \texttt{36.a1}. Then $\textit{j}_{\widetilde{E}} = 54000$. Running code provided by Lozano-Robledo, we see that $\widetilde{E}$ has CM by an order of $K = \QQ(\sqrt{-3})$ with discriminant $\Delta_{K} = -3$ and conductor $f = 2$. As $\Delta_{K} \cdot f^{2} \equiv 0 \mod 4$, we have $\delta = \frac{\Delta \cdot f^{2}}{4} = -3$ and $\phi = 0$. Note that $\Delta_{K} \cdot f^{2} = -12$ is not divisible by $8$. By Corollary \ref{Theorem 1.6 corollary 2}, $\rho_{\widetilde{E},2^{\infty}}(G_{\Q})$ is conjugate to $\mathcal{N}_{-3,0}(2^{\infty})$. As $E$ is a quadratic twist of $\widetilde{E}$, $\rho_{E,2^{\infty}}(G_{\QQ})$ is also conjugate to $\mathcal{N}_{-3,0}(2^{\infty})$.

\end{proof}

\begin{proposition}
Let $E/\QQ$ be an elliptic curve with $\textit{j}_{E} = 8000$. Then $E$ is $2$-isogenous to an elliptic curve $E'/\QQ$ with $\textit{j}_{E'} = 8000$. The isogeny graph associated to the $\QQ$-isogeny class of $E$ is of type $L_{2}(2)$. Denote
\begin{center} $H_{1,3} = \left\langle \begin{bmatrix} 1 & 0 \\ 0 & -1 \end{bmatrix}, \begin{bmatrix} 3 & 0 \\ 0 & 3 \end{bmatrix}, \begin{bmatrix} 1 & 1 \\ -2 & 1 \end{bmatrix} \right\rangle$ and $H_{-1,3} = \left\langle \begin{bmatrix} -1 & 0 \\ 0 & 1 \end{bmatrix}, \begin{bmatrix} 3 & 0 \\ 0 & 3 \end{bmatrix}, \begin{bmatrix} 1 & 1 \\ -2 & 1 \end{bmatrix} \right\rangle$. \end{center}
\end{proposition}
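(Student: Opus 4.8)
The plan is to read off the CM data at $j_E = 8000$, feed it into Lozano-Robledo's Theorem \ref{Theorem 1.6} to obtain a shortlist of possible $2$-adic images, and then transfer the information between the two vertices of the graph.

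\emph{Step 1 (CM data and isogeny graph).} The value $j=8000$ is the $j$-invariant of an elliptic curve with CM by the maximal order of $K=\QQ(\sqrt{-2})$, so $\Delta_K=-8$ and the conductor is $f=1$ (this is the line $d_K=-8$ of Table \ref{tab-CMgraphs}, and can be checked with Lozano-Robledo's code as in the other proofs). Since $\Delta_Kf^2=-8\equiv 0\bmod 4$ we are in the first case of Theorems \ref{Theorem 1.1} and \ref{Theorem 1.6}, with $\delta=\Delta_Kf^2/4=-2$ and $\phi=0$, so $\rho_{E,2^\infty}(G_\QQ)$ is contained in $\mathcal{N}_{-2,0}(2^\infty)$. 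The prime $2$ ramifies in $\mathcal{O}_K$ as $(2)=(\sqrt{-2})^2$ with $(\sqrt{-2})$ principal, hence $E/E[\sqrt{-2}]$ is an elliptic curve $E'/\QQ$ with $j_{E'}=j_E=8000$ and $\phi\colon E\to E'$ is a $\QQ$-rational cyclic $2$-isogeny; by Theorem \ref{thm-mainisogenygraphs} together with Table \ref{tab-CMgraphs} the isogeny graph is exactly of type $L_2(2)$, consisting of the two vertices $E_1,E_2$ joined by this $2$-isogeny, both with $j=8000$.

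\emph{Step 2 (shortlist via Theorem \ref{Theorem 1.6}).} Apply Theorem \ref{Theorem 1.6} to $E$ (legitimate since $j_E\neq 0,1728$). The decisive numerical observation is that $\Delta_Kf^2=-8\equiv 8\bmod 16$, so $\Delta_Kf^2\not\equiv 0\bmod 16$, and therefore part (1) of Theorem \ref{Theorem 1.6} (the case $n\le 2$) cannot occur. Consequently only two possibilities remain. Either the hypothesis of Theorem \ref{Theorem 1.6} fails, i.e.\ $\operatorname{Gal}(K(E[2^m])/K)=(\mathcal{O}_K/2^m\mathcal{O}_K)^\times$ for every $m$; then $\rho_{E,2^\infty}(G_\QQ)$ contains all of $\mathcal{C}_{-2,0}(2^\infty)$, and since complex conjugation does not lie in the abelian Cartan subgroup while the image sits inside $\mathcal{N}_{-2,0}(2^\infty)$ with index $2$ over $\mathcal{C}_{-2,0}(2^\infty)$ (Theorem \ref{Theorem 1.1}), we get $\rho_{E,2^\infty}(G_\QQ)=\mathcal{N}_{-2,0}(2^\infty)$. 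Or we are in part (2) of Theorem \ref{Theorem 1.6} with $n=3$, so for $m\ge 3$ the image of $\rho_{E,2^m}$ restricted to $G_K$ is $J_1$ or $J_2$ with scalar block $3\cdot\operatorname{Id}$ (the scalar is $3$, never $5$), and the final assertion of Theorem \ref{Theorem 1.6} then exhibits $\rho_{E,2^\infty}(G_\QQ)$, up to $\GL(2,\ZZ_2)$-conjugacy, as one of the four groups $\langle \pm\begin{bmatrix}\epsilon&0\\0&-\epsilon\end{bmatrix},\,3\cdot\operatorname{Id},\,\pm\begin{bmatrix}1&-2\\1&1\end{bmatrix}\rangle$ with $\epsilon\in\{\pm1\}$. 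A direct computation — conjugating by the permutation matrix $\begin{bmatrix}0&1\\1&0\end{bmatrix}$, using $c_1c_{-1}=-\operatorname{Id}$, and tracking which index-$2$ Cartan subgroups contain $-\operatorname{Id}$ — collapses these four groups, up to conjugacy, to exactly $H_{1,3}$ and $H_{-1,3}$.

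\emph{Step 3 (the two vertices).} It remains to pair the images at $E_1$ and $E_2$. By Corollary \ref{Theorem 1.6 corollary 1}, for $j\neq 0,1728$ the whole tower $\rho_{E_i,2^\infty}(G_\QQ)$ is the full lift inside $\mathcal{N}_{-2,0}(2^\infty)$ of $\overline{\rho}_{E_i,8}(G_\QQ)$, so everything is determined modulo $8$; moreover Lemma \ref{ell-adic Galois images} (with $\ell=2$, $r=1$, kernel $E_1[\sqrt{-2}]$) expresses $\overline{\rho}_{E_2,8}(G_\QQ)$ explicitly from $\overline{\rho}_{E_1,16}(G_\QQ)$. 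Feeding in the two possibilities of Step 2 — equivalently, using that $E_1,E_2$ are quadratic twists (same $j\neq 0,1728$) together with Lemma \ref{index of quadratic twists} and Corollary \ref{subgroups of index 2 contain -Id} — one checks that if $\rho_{E_1,2^\infty}(G_\QQ)=\mathcal{N}_{-2,0}(2^\infty)$ then, because $-\operatorname{Id}\in\mathcal{C}_{-2,0}(2^\infty)\subseteq\mathcal{N}_{-2,0}(2^\infty)$, also $\rho_{E_2,2^\infty}(G_\QQ)=\mathcal{N}_{-2,0}(2^\infty)$; and if $\rho_{E_1,2^\infty}(G_\QQ)\cong H_{1,3}$, which does not contain $-\operatorname{Id}$, then $\rho_{E_2,2^\infty}(G_\QQ)\cong H_{-1,3}$, the companion twist. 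This produces exactly the two rows of the table. I expect the real work to sit in Steps 2 and 3: identifying the four abstract groups of Theorem \ref{Theorem 1.6} with $H_{1,3}$ and $H_{-1,3}$ up to $\GL(2,\ZZ_2)$-conjugacy, and correctly matching the two vertices — since, unlike the odd-isogeny-degree cases handled earlier, the $2$-adic images at $E_1$ and $E_2$ here genuinely differ and Corollary \ref{coprime isogeny-degree} does not apply.
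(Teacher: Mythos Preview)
Your overall structure matches the paper's: read off the CM data ($\Delta_K=-8$, $f=1$, $\delta=-2$, $\phi=0$), apply Theorem~\ref{Theorem 1.6} to get a shortlist, then sort out the two vertices. Your Step~2 observation that $\Delta_Kf^2=-8\not\equiv 0\bmod 16$ forces $n=3$ (hence $\alpha=3$, never $5$) is cleaner than what the paper does --- the paper instead computes directly that all four $\alpha=5$ groups reduce mod~$8$ to $\mathcal{N}_{-2,0}(8)$ and invokes Corollary~\ref{Theorem 1.6 corollary 1}. Both routes land on the same trichotomy $\{H_{1,3},\,H_{-1,3},\,\mathcal{N}_{-2,0}(2^\infty)\}$.

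There is, however, a genuine gap in your Step~3. The quadratic-twist argument (same $j\neq 0,1728$) together with Corollary~\ref{contains -Id} only tells you that if $\rho_{E_1,2^\infty}(G_\QQ)\cong H_{1,3}$ then $\rho_{E_2,2^\infty}(G_\QQ)\in\{H_{1,3},H_{-1,3}\}$; it does \emph{not} rule out $\rho_{E_2,2^\infty}(G_\QQ)\cong H_{1,3}$. To actually pin down that the $2$-isogeny swaps $H_{1,3}\leftrightarrow H_{-1,3}$ you must either carry out the Lemma~\ref{ell-adic Galois images} computation (which you mention but do not do) or, as the paper does, appeal to an explicit worked example: the paper cites Example~9.4 of \cite{al-rCMGRs} for the curves \texttt{256.a1} and \texttt{256.a2} to nail down which vertex carries which image. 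The paper also separately exhibits the curve \texttt{2304.h1} (checking via code that $-\operatorname{Id}$ lies in its mod-$8$ image) to confirm that the $\mathcal{N}_{-2,0}(2^\infty)$ row of the table is actually realized --- your argument shows it is \emph{possible} but not that it \emph{occurs}. Both of these points are needed for the table to be a complete and sharp classification.
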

Then $\rho_{E,2^{\infty}}(G_{\QQ})$ fits in the following table
\begin{center} \begin{table}[h!]
 	\renewcommand{\arraystretch}{1.6}
	\begin{tabular} { |c|c|c| }
		\hline
		
		Isogeny graph & $\rho_{E_{1},2^{\infty}}(G_{\QQ})$ & $\rho_{E_{2},2^{\infty}}(G_{\QQ})$ \\
		\hline
		\multirow{2}*{\includegraphics[scale=0.05]{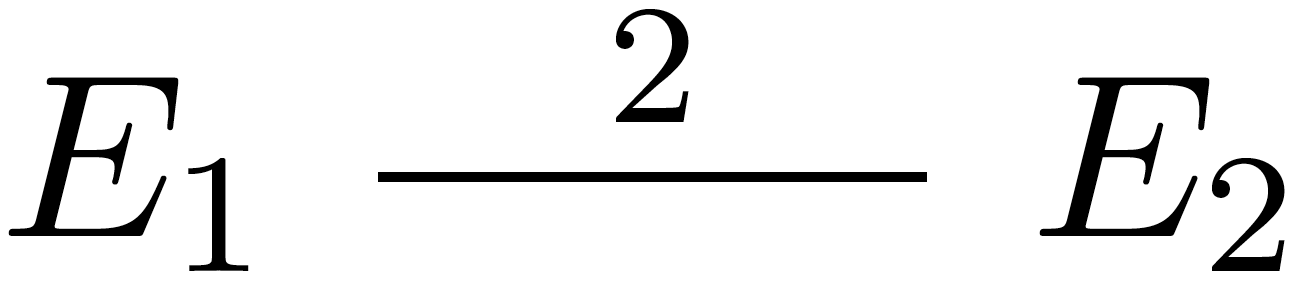}} & $H_{1, 3}$ & $H_{-1, 3}$ \\
		\cline{2-3}
		& $\mathcal{N}_{-2,0}(2^{\infty})$ & $\mathcal{N}_{-2,0}(2^{\infty})$ \\
		\hline
	\end{tabular}
\end{table} \end{center}

\begin{proof}

Let $E/\QQ$ be an elliptic curve such that $\textit{j}_{E} = 8000$. Then the isogeny graph associated to the $\QQ$-isogeny class of $E$ is of $L_{2}(2)$ type shown below:
\begin{center}
\begin{tikzcd}
E_{1} \arrow[rr, no head, "2"] &  & E_{2}
\end{tikzcd}
\end{center}
If $\rho_{E,2^{\infty}}(G_{\QQ})$ is not conjugate to $\mathcal{N}_{\delta,\phi}(2^{\infty})$, then by Theorem \ref{Theorem 1.6}, there are $\epsilon \in \{1, -1\}$ and $\alpha \in \{3, 5\}$, such that $\rho_{E,2^{\infty}}(G_{\QQ})$ is conjugate to
\begin{center} $H_{\epsilon,\alpha} = \left\langle \begin{bmatrix} \epsilon & 0 \\ 0 & -\epsilon \end{bmatrix}, \begin{bmatrix} \alpha & 0 \\ 0 & \alpha \end{bmatrix}, \begin{bmatrix} 1 & 1 \\ \delta & 1 \end{bmatrix} \right\rangle$ or $H_{\epsilon, \alpha}' = \left\langle \begin{bmatrix} \epsilon & 0 \\ 0 & -\epsilon \end{bmatrix}, \begin{bmatrix} \alpha & 0 \\ 0 & \alpha \end{bmatrix}, \begin{bmatrix} -1 & -1 \\ -\delta & -1 \end{bmatrix} \right\rangle$. \end{center}

Moreover, as $\textit{j}_{E} \neq 0, 1728$, all elliptic curves $E'/\QQ$ such that $\textit{j}_{E'} = 8000$ are quadratic twists of $E$. Let $E/\QQ$ be the elliptic curve with LMFDB label \texttt{256.a1}. Then $\textit{j}_{E} = 8000$. Using code provided by Lozano-Robledo, we see that $E$ has complex multiplication by an order of $K = \QQ(\sqrt{-2})$ with $\Delta_{K} = -8$ and conductor $f = 1$. We compute that $\delta = \frac{-8 \cdot 1^{2}}{4} = -2$ and thus, $\phi = 0$. .

A quick computation reveals that $H_{1,5}$, $H_{-1,5}$, $H_{1,5}'$, and $H_{-1,5}'$ are all equal to $\mathcal{N}_{-2,0}(2^{\infty})$ modulo $8$. By Corollary \ref{Theorem 1.6 corollary 1}, if $\overline{\rho}_{E,8}(G_{\QQ})$ is conjugate to $\mathcal{N}_{-2,0}(2^{\infty})$ modulo $8$, then $\rho_{E,2^{\infty}}(G_{\QQ})$ is conjugate to $\mathcal{N}_{-2,0}(2^{\infty})$. Another quick computation reveals that $H_{1,3}$ is conjugate to $H_{1,3}'$ and $H_{-1,3}$ is conjugate to $H_{-1,3}'$ modulo $8$. Neither $H_{1,3}$ nor $H_{-1,3}$ contain $\operatorname{-Id}$ and $H_{1,3}$ is not conjugate to $H_{-1,3}$. Moreover, $H_{1,3}$, $H_{-1,3}$, and $H_{1,5}$ are quadratic twists modulo $8$ with $H_{1,5} = \left\langle H_{1,3}, \operatorname{-Id} \right\rangle = \left\langle H_{-1,3}, \operatorname{-Id} \right\rangle$ modulo $8$. In other words, up to conjugation, there are three groups to work with $H_{1,3}$, $H_{-1,3}$, and $H_{1,5} = \mathcal{N}_{-2,0}(2^{\infty})$.

The elliptic curve $E_{1} : y^{2} = x^{3} - 17280x - 774144$ has LMFDB label \texttt{256.a1} and the elliptic curve $E_{2} : y^{2} = x^{3} - 4320x + 96768$ has LMFDB label \texttt{256.a2}. Moreover, $\textit{j}_{E_{1}} = \textit{j}_{E_{2}} = 8000$ and $E_{1}$ is $2$-isogenous to $E_{2}$. By part 2 of Example 9.4 in \cite{al-rCMGRs}, $\rho_{E_{1},2^{\infty}}(G_{\QQ})$ is conjugate to $H_{-1,3}$ and $\rho_{E_{2},2^{\infty}}(G_{\QQ})$ is conjugate to $H_{1,3}$.

Finally, let $E/\QQ$ be the elliptic curve with LMFDB label \texttt{2304.h1}. Then $\textit{j}_{E} = 8000$. Hence, the isogeny graph associated to the $\QQ$-isogeny class of $E$ is of $L_{2}(2)$ type and $E$ is $\QQ$-isogenous to one other elliptic curve $E'$ defined over $\QQ$. Using code provided by Lozano-Robledo, we see that $\overline{\rho}_{E,8}(G_{\QQ})$ contains $\operatorname{-Id}$ and by Lemma \ref{contains -Id}, $E'$ does too. By the fact that the \textit{j}-invariants of both $E$ and $E'$ equal $8000$ and $\overline{\rho}_{E,8}(G_{\QQ})$ and $\overline{\rho}_{E',8}(G_{\QQ})$ both contain $\operatorname{-Id}$, we have that $\rho_{E,2^{\infty}}(G_{\QQ})$ and $\rho_{E,2^{\infty}}(G_{\QQ})$ are both conjugate to $\mathcal{N}_{-2,0}(2^{\infty})$.
\end{proof}

\begin{proposition}
Define the following subgroups of $\operatorname{GL}(2, \ZZ_{2})$
\begin{itemize}
    \item $G_{1} = \left\{ \begin{bmatrix} a & b \\ -b & a \end{bmatrix} : a^{2}+b^{2} \not \equiv 0 \mod 2 \right\}$
    \item $G_{2,a} = \left\langle \operatorname{-Id}, 3 \cdot \operatorname{Id}, \begin{bmatrix} 1 & 2 \\ -2 & 1 \end{bmatrix} \right\rangle$
    \item $G_{2,b} = \left\langle \operatorname{-Id}, 3 \cdot \operatorname{Id}, \begin{bmatrix} 2 & 1 \\ -1 & 2 \end{bmatrix} \right\rangle$
    \item $G_{4,a} = \left\langle 5 \cdot \operatorname{Id}, \begin{bmatrix} 1 & 2 \\ -2 & 1 \end{bmatrix} \right\rangle$
    \item $G_{4,b} = \left\langle 5 \cdot \operatorname{Id}, \begin{bmatrix} -1 & -2 \\ 2 & -1 \end{bmatrix} \right\rangle$
    \item $G_{4,c} = \left\langle 3 \cdot \operatorname{Id}, \begin{bmatrix} 2 & -1 \\ 1 & 2 \end{bmatrix} \right\rangle$
    \item $G_{4,d} = \left\langle 3 \cdot \operatorname{Id}, \begin{bmatrix} -2 & 1 \\ -1 & -2 \end{bmatrix} \right\rangle$
\end{itemize}
and let $\Gamma = \left\{c_{1} = \begin{bmatrix} 1 & 0 \\ 0 & -1 \end{bmatrix}, c_{-1} = \begin{bmatrix} -1 & 0 \\ 0 & 1 \end{bmatrix}, c_{1}' = \begin{bmatrix} 0 & 1 \\ 1 & 0 \end{bmatrix}, c_{-1}' = \begin{bmatrix} 0 & -1 \\ -1 & 0 \end{bmatrix} \right\}$. Let $E/\QQ$ be an elliptic curve such that $\textit{j}_{E} = 1728$. Then $\rho_{E,2^{\infty}}(G_{\QQ})$ is conjugate to $\left\langle H, \gamma \right\rangle$ where $H$ is one of the seven groups above and $\gamma \in \Gamma$. Either the isogeny graph associated to the $\QQ$-isogeny class $E$ is of type $T_{4}$ or the isogeny graph associated to the $\QQ$-isogeny class of $E$ is of type $L_{2}(2)$. For $\epsilon \in \left\{\pm 1 \right\}$, denote
\begin{center} $H_{\epsilon} = \left\langle \begin{bmatrix} \epsilon & 0 \\ 0 & -\epsilon \end{bmatrix}, \begin{bmatrix} 5 & 0 \\ 0 & 5 \end{bmatrix}, \begin{bmatrix} 1 & 1 \\ -4 & 1 \end{bmatrix} \right\rangle$ and $H_{\epsilon}' = \left\langle \begin{bmatrix} \epsilon & 0 \\ 0 & -\epsilon \end{bmatrix}, \begin{bmatrix} 5 & 0 \\ 0 & 5 \end{bmatrix}, \begin{bmatrix} -1 & -1 \\ 4 & -1 \end{bmatrix} \right\rangle$. \end{center}

If the isogeny graph associated to the $\QQ$-isogeny class of $E$ is of $T_{4}$ type, then the $\QQ$-isogeny class of $E$ consists of four elliptic curves over $\QQ$, $E_{1}$, $E_{2}$, $E_{3}$, and $E_{4}$, such that $\textit{j}_{E_{1}} = \textit{j}_{E_{2}} = 1728$ and $\textit{j}_{E_{3}} = \textit{j}_{E_{4}} = 287496$ with the following algebraic data:
\begin{center} \begin{table}[h!]
 	\renewcommand{\arraystretch}{1.6}
 	\scalebox{1}{
	\begin{tabular}{|c|c|c|c|c|c|}
		\hline
		Isogeny graph & Torsion Configuration & $\rho_{E_{1},2^{\infty}}(G_{\QQ})$ & $\rho_{E_{2},2^{\infty}}(G_{\QQ})$ & $\rho_{E_{3},2^{\infty}}(G_{\QQ})$ & $\rho_{E_{4},2^{\infty}}(G_{\QQ})$ \\
		\hline
		\multirow{3}*{\includegraphics[scale=0.06]{T4_isogeny_graph.png}} & $([2,2],[2],[2],[2])$ & $\left\langle G_{2,a}, c_{1} \right\rangle$ & $\left\langle G_{2,a}, c_{1}' \right\rangle$ & $\mathcal{N}_{-4,0}(2^{\infty})$ & $\mathcal{N}_{-4,0}(2^{\infty})$ \\
		\cline{2-6}
		& $([2,2],[2],[4],[2])$ & $\left\langle G_{4,a}, c_{1} \right\rangle$ & $\left\langle G_{4,a}, c_{1}' \right\rangle$ & $H_{1}$ & $H_{-1}$ \\
		\cline{2-6}
		& $([2,2],[4],[4],[2])$ & $\left\langle G_{4,b}, c_{1} \right\rangle$ & $\left\langle G_{4,b}, c_{1}' \right\rangle$ & $H_{1}'$ & $H_{-1}'$ \\
		\hline
	\end{tabular}}
\end{table} \end{center}
If the isogeny graph associated to the $\QQ$-isogeny class of $E$ is of $L_{2}(2)$ type, then, the $\QQ$-isogeny class of $E$ consists of two elliptic curves over $\QQ$, $E_{1}$ and $E_{2}$ such that $\textit{j}_{E_{1}} = \textit{j}_{E_{2}} = 1728$ with the following algebraic data.
\begin{center} \begin{table}[h!]
 	\renewcommand{\arraystretch}{1.6}
 	\scalebox{1}{
	\begin{tabular}{|c|c|c|c|}
		\hline
		Isogeny graph & Torsion configuration & $\rho_{E_{1},2^{\infty}}(G_{\QQ})$ & $\rho_{E_{2},2^{\infty}}(G_{\QQ})$ \\
		\hline
		\multirow{4}*{\includegraphics[scale=0.09]{L22_graph.png}} & \multirow{4}*{$([2],[2])$} & $\left\langle G_{2,b}, c_{1} \right\rangle$ & $\left\langle G_{2,b}, c_{1}' \right\rangle$ \\
		\cline{3-4}
		& & $\left\langle G_{4,c}, c_{1} \right\rangle$ & $\left\langle G_{4,c}, c_{1}' \right\rangle$ \\
		\cline{3-4}
		& & $\left\langle G_{4,d}, c_{1} \right\rangle$ & $\left\langle G_{4,d}, c_{1}' \right\rangle$ \\
		\cline{3-4}
		& & $\mathcal{N}_{-1,0}(2^{\infty})$ & $\mathcal{N}_{-1,0}(2^{\infty})$ \\
		\hline
	\end{tabular}}
\end{table} \end{center}
\end{proposition}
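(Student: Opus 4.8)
The plan is to reduce everything to two results of Lozano-Robledo recalled in Section \ref{sec-Alvaros work} --- Theorem \ref{Theorem 1.7} for the vertices with $\textit{j} = 1728$ and Theorem \ref{Theorem 1.6} for the vertices with $\textit{j} = 287496$ --- and then to move between the vertices of the isogeny graph along its $2$-isogenies by means of Lemma \ref{ell-adic Galois images}. The first assertion, that $\rho_{E,2^{\infty}}(G_{\QQ})$ is conjugate to $\langle H, \gamma \rangle$ with $\gamma \in \Gamma$ and $H$ one of the seven listed groups, is a restatement of Theorem \ref{Theorem 1.7} (up to replacing the representatives $G_{4,c}, G_{4,d}$ written there with those in the present statement): a curve $E/\QQ$ with $\textit{j}_{E} = 1728$ has a model $y^{2}=x^{3}+tx$, hence CM by the maximal order of $K = \QQ(i)$, so $f = 1$, $\Delta_{K} = -4$, $\delta = -1$, $\phi = 0$, and Theorem \ref{Theorem 1.7} gives exactly the claimed list. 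Since $y^{2}=x^{3}+tx$ always carries the rational $2$-torsion point $(0,0)$, there is at least one $2$-isogeny, and by the classification recorded in Table \ref{tab-CMgraphs} (proved in \cite{gcal-r}) the only isogeny graphs attached to a curve over $\QQ$ with $\textit{j}=1728$ are $L_{2}(2)$, whose two vertices both have $\textit{j}=1728$, and $T_{4}$, whose four vertices are two with $\textit{j}=1728$ and two with $\textit{j}=287496$. Concretely, $E\colon y^{2}=x^{3}+tx$ lies in a $T_{4}$ graph precisely when $-t$ or $t$ is a rational square --- equivalently when $E$, or its $2$-isogenous curve $y^{2}=x^{3}-4tx$, has full rational $2$-torsion --- and in an $L_{2}(2)$ graph otherwise; in the $T_{4}$ case $E_{1}$ is the central vertex (full rational $2$-torsion), $E_{2}$ the other $\textit{j}=1728$ vertex, and $E_{3}, E_{4}$ the two $\textit{j}=287496$ vertices.

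In the $T_{4}$ case I would proceed as follows. The vertices $E_{3}, E_{4}$ have CM by the order of conductor $2$ in $\QQ(i)$, so for them $\Delta_{K}f^{2}=-16 \equiv 0 \pmod{16}$, $\delta = -4$, $\phi = 0$, and we are in case $(1)$ of Theorem \ref{Theorem 1.6}: each of $\rho_{E_{3},2^{\infty}}(G_{\QQ})$, $\rho_{E_{4},2^{\infty}}(G_{\QQ})$ is then conjugate to one of $\mathcal{N}_{-4,0}(2^{\infty})$, $H_{\pm 1}$, $H_{\pm 1}'$. For each of the three torsion configurations I would fix the representative $\QQ$-isogeny class \texttt{32.a}, \texttt{64.a}, \texttt{288.d}, run Lozano-Robledo's code to read off the CM data of a chosen vertex, and hence (via Theorem \ref{Theorem 1.7} or \ref{Theorem 1.6}, with Corollary \ref{Theorem 1.6 corollary 1}) determine its $2$-adic image; then Lemma \ref{ell-adic Galois images} with $\ell = 2$ and $r = 1$, applied along each of the three $2$-isogenies out of the central vertex $E_{1}$ --- i.e.\ to the three quotients $E_{1}/\langle P_{2}\rangle$, $E_{1}/\langle Q_{2}\rangle$, $E_{1}/\langle P_{2}+Q_{2}\rangle$ after a suitable change of $\ZZ_{2}$-basis of $T_{2}(E_{1})$ --- transports the image to the remaining vertices. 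The torsion configuration, read off from $\overline{\rho}_{E_{i},4}(G_{\QQ})$, determines which of $G_{2,a}, G_{4,a}, G_{4,b}$ occurs at $E_{1}$; Corollary \ref{contains -Id} controls the quadratic twist class of the whole isogeny class (none of $G_{4,a},G_{4,b},H_{\pm 1},H_{\pm 1}'$ contains $\operatorname{-Id}$, whereas $G_{2,a}$ and $\mathcal{N}_{-4,0}(2^{\infty})$ do); and the element $\gamma \in \Gamma$ at each vertex is forced to be $c_{1}$ or $c_{1}'$ according to whether the rational $2$-torsion subgroup of that curve is generated by a basis vector or by a sum of basis vectors in the basis produced by Lemma \ref{ell-adic Galois images}.

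The $L_{2}(2)$ case goes the same way, now on the classes \texttt{288.a}, \texttt{256.b}, \texttt{256.c}, \texttt{2304.a}: both vertices carry $\textit{j}=1728$, Theorem \ref{Theorem 1.7} narrows $H$ to $\{G_{1},G_{2,b},G_{4,c},G_{4,d}\}$ (the four possibilities compatible with torsion $([2],[2])$ at both vertices), and one transports along the single $2$-isogeny by Lemma \ref{ell-adic Galois images}, which fixes $H$ and interchanges $c_{1}$ and $c_{1}'$. When $H = G_{1}$ the resulting group is $\mathcal{N}_{-1,0}(2^{\infty})$, which contains $\operatorname{-Id}$, so quadratic twisting is immaterial (Corollary \ref{subgroups of index 2 contain -Id}) and both vertices get $\mathcal{N}_{-1,0}(2^{\infty})$; in the other three cases the twist is exactly the data $(H,\gamma)$ displayed in the table.

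I expect the main difficulty to be the bookkeeping required to produce \emph{exactly} the matrices in the tables rather than mere $\operatorname{GL}(2,\ZZ_{2})$-conjugates of them --- in particular, to match correctly the four ``twist-like'' pairs $G_{4,a}$ vs.\ $G_{4,b}$, $G_{4,c}$ vs.\ $G_{4,d}$, $H_{1}$ vs.\ $H_{-1}$, $H_{1}'$ vs.\ $H_{-1}'$, each pair becoming equal after adjoining $\operatorname{-Id}$ and hence indistinguishable by any mod-$2^{n}$ argument alone. This reduces to: (i) verifying, with one computed example drawn from Section $9$ of \cite{al-rCMGRs} (e.g.\ the relevant part of Example $9.4$), which member of each pair occurs at which vertex of a single chosen class; and (ii) checking that the companion relation between $E_{1}$ and $E_{2}$ (``$E_{2}$ is the $2$-isogenous curve obtained by the basis swap reflected in $c_{1} \leftrightarrow c_{1}'$'') is the only assignment consistent with the determinant (Weil pairing) and complex-conjugation constraints recalled in Section \ref{sec-background}, so that the two base computations propagate to all remaining cases through Lemma \ref{ell-adic Galois images}.
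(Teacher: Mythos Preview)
Your plan is essentially the same as the paper's: both proofs rest on Theorem \ref{Theorem 1.7} for the $\textit{j}=1728$ vertices, Theorem \ref{Theorem 1.6} for the $\textit{j}=287496$ vertices, the quadratic-twist bookkeeping via $\operatorname{-Id}$ (Corollaries \ref{contains -Id} and \ref{subgroups of index 2 contain -Id}), and explicit curve computations drawn from Examples 9.4 and 9.8 of \cite{al-rCMGRs} to pin down which member of each twist pair sits at which vertex.

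Two small remarks on where your write-up diverges from the paper. First, the paper spends a paragraph cutting the $7\times 4=28$ a priori groups $\langle H,\gamma\rangle$ down to $13$ by explicit conjugacy checks (e.g.\ $\langle G_{4,a},c_{1}\rangle$ is conjugate to $\langle G_{4,a},c_{-1}\rangle$ via conjugation by $c_{1}$ and a basis swap); you fold this into ``is a restatement of Theorem \ref{Theorem 1.7}'', which is slightly too quick. Second, your sentence that Lemma \ref{ell-adic Galois images} ``fixes $H$ and interchanges $c_{1}$ and $c_{1}'$'' along the $2$-isogeny is not literally what that lemma gives you: the transformation $\begin{bmatrix}A&C\\B&D\end{bmatrix}\mapsto\begin{bmatrix}A&C/2\\2B&D\end{bmatrix}$ requires first choosing a basis in which the isogeny kernel is $\langle Q_{2}\rangle$, and the resulting group is only \emph{conjugate} to the claimed target, not equal. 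The paper sidesteps this by computing both isogenous curves independently from the examples in \cite{al-rCMGRs} rather than transporting; your route through Lemma \ref{ell-adic Galois images} works too, but the verification you flag as ``the main difficulty'' is exactly where that extra care is needed.
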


\begin{proof}
Let $E/\QQ$ be an elliptic curve with $\textit{j}_{E} = 1728$. Then $E$ has CM by an order of $K = \QQ(i)$ with discriminant $\Delta_{K} = -4$ and conductor $f = 1$. Then $\delta = \frac{\Delta_{K} \cdot f^{2}}{4} = -1$. By Theorem \ref{Theorem 1.1}, $\rho_{E,2^{\infty}}(G_{\QQ})$ is conjugate to a subgroup of $\mathcal{N}_{-1,0}(2^{\infty}) = \left\langle G_{1}, \begin{bmatrix} -1 & 0 \\ 0 & 1 \end{bmatrix} \right\rangle$. More precisely, by Theorem \ref{Theorem 1.7}, $\rho_{E,2^{\infty}}(G_{\Q})$ is generated by one of $G_{1}$, $G_{2,a}$, $G_{2b}$, $G_{4,a}$, $G_{4,b}$, $G_{4,c}$, $G_{4,d}$, and one element of $\Gamma$.

First note that $$\begin{bmatrix} 0 & 1 \\ -1 & 0 \end{bmatrix} \cdot \begin{bmatrix} 1 & 0 \\ 0 & -1 \end{bmatrix} = \begin{bmatrix} 0 & -1 \\ -1 & 0 \end{bmatrix}.$$
Let $\gamma, \gamma' \in \Gamma$. By the fact that $G_{1}$ contains $\operatorname{-Id}$ and $\begin{bmatrix} 0 & 1 \\ -1 & 0 \end{bmatrix}$, we have that $\left\langle G_{1}, \gamma \right\rangle = \left\langle G_{1}, \gamma' \right\rangle = \mathcal{N}_{-1,0}(2^{\infty})$. By the fact that $G_{2,a}$ and $G_{2,b}$ contain $\operatorname{-Id}$, $\left\langle G_{2,a}, c_{1} \right\rangle = \left\langle G_{2,a}, c_{-1} \right\rangle$ and $\left\langle G_{2,b}, c_{1} \right\rangle = \left\langle G_{2,b}, c_{-1} \right\rangle$. Similarly, $\left\langle G_{2,a}, c_{1}' \right\rangle = \left\langle G_{2,a}, c_{-1}' \right\rangle$ and $\left\langle G_{2,b}, c_{1}' \right\rangle = \left\langle G_{2,b}, c_{-1}' \right\rangle$. Note that
$$\left\langle G_{4,a}, c_{1} \right\rangle = \left\langle 5 \cdot \operatorname{Id}, \begin{bmatrix} 1 & 2 \\ -2 & 1 \end{bmatrix}, c_{1} \right\rangle = \left\langle 5 \cdot \operatorname{Id}, c_{1} \cdot \begin{bmatrix} 1 & 2 \\ -2 & 1 \end{bmatrix} \cdot c_{1}^{-1}, c_{1} \right\rangle = \left\langle 5 \cdot \operatorname{Id}, \begin{bmatrix} 1 & -2 \\ 2 & 1 \end{bmatrix}, c_{1} \right\rangle.$$ After switching the orders of the generators of $\left\langle G_{4,a}, c_{1} \right\rangle$, we see that this last group is conjugate to the group $\left\langle 5 \cdot \operatorname{Id}, \begin{bmatrix} 1 & 2 \\ -2 & 1  \end{bmatrix}, c_{-1} \right\rangle = \left\langle G_{4,a}, c_{-1} \right\rangle$. Hence, $\left\langle G_{4,a}, c_{1} \right\rangle$ is conjugate to $\left\langle G_{4,a}, c_{-1} \right\rangle$.

Next, note that $\left\langle G_{4,a}, c_{1}' \right\rangle = \left\langle 5 \cdot \operatorname{Id}, \begin{bmatrix} 1 & 2 \\ -2 & 1
\end{bmatrix}, \begin{bmatrix} 0 & 1 \\ 1 & 0 \end{bmatrix} \right\rangle$ is conjugate to the group
$$\left\langle 5 \cdot \operatorname{Id}, c_{1} \cdot \begin{bmatrix} 1 & 2 \\ -2 & 1 \end{bmatrix} \cdot c_{1}^{-1}, c_{1} \cdot \begin{bmatrix} 0 & 1 \\ 1 & 0 \end{bmatrix} \cdot c_{1}^{-1} \right\rangle = \left\langle 5 \cdot \operatorname{Id}, \begin{bmatrix} 1 & -2 \\ 2 & 1 \end{bmatrix}, \begin{bmatrix} 0 & -1 \\ -1 & 0 \end{bmatrix} \right\rangle.$$
After switching the order of the generators, this last group is conjugate to $\left\langle 5 \cdot \operatorname{Id}, \begin{bmatrix} 1 & 2 \\ -2 & 1 \end{bmatrix}, \begin{bmatrix} 0 & -1 \\ -1 & 0 \end{bmatrix} \right\rangle = \left\langle G_{4,a}, c_{-1}' \right\rangle$. Hence, $\left\langle G_{4,a}, c_{1}' \right\rangle$ is conjugate to $\left\langle G_{4,a}, c_{-1}' \right\rangle$.

Making similar computations, we see that $\left\langle G_{4,x}, c_{1} \right\rangle$ is conjugate to $\left\langle G_{4,x}, c_{-1} \right\rangle$ and $\left\langle G_{4,x}, c_{1}' \right\rangle$ is conjugate to $\left\langle G_{4,x}, c_{-1}' \right\rangle$ for $x = b$, $c$, and $d$. Hence, we work with the case that $\rho_{E,2^{\infty}}(G_{\Q})$ is conjugate to $\mathcal{N}_{-1,0}(2^{\infty})$, $\left\langle G_{2,a}, c_{1} \right\rangle$, $\left\langle G_{2,a}, c_{1}' \right\rangle$, $\left\langle G_{2,b}, c_{1} \right\rangle$, $\left\langle G_{2,b}, c_{1}' \right\rangle$, $\left\langle G_{4,a}, c_{1} \right\rangle$, $\left\langle G_{4,a}, c_{1}' \right\rangle$, $\left\langle G_{4,b}, c_{1} \right\rangle$, $\left\langle G_{4,b}, c_{1}' \right\rangle$, $\left\langle G_{4,c}, c_{1} \right\rangle$, $\left\langle G_{4,c}, c_{1}' \right\rangle$, $\left\langle G_{4,d}, c_{1} \right\rangle$, or $\left\langle G_{4,d}, c_{1}' \right\rangle$.

If $\rho_{E,2^{\infty}}(G_{\Q})$ is conjugate to $\left\langle G_{2,a}, c_{1} \right\rangle$, $\left\langle G_{4,a}, c_{1} \right\rangle$, or $\left\langle G_{4,b}, c_{1} \right\rangle$, then $\rho_{E,2^{\infty}}(G_{\Q})$ reduces modulo $2$ to the trivial group and hence, $E$ has full two-torsion defined over $\QQ$. Moreover, $\left\langle G_{2,a}, c_{1} \right\rangle$ is the only group of those three which contains $\operatorname{-Id}$ and $\left\langle G_{2,a}, c_{1} \right\rangle = \left\langle G_{4,a}, c_{1}, \operatorname{-Id} \right\rangle = \left\langle G_{4,b}, c_{1}, \operatorname{-Id} \right\rangle$. In other words, $\left\langle G_{2,a}, c_{1} \right\rangle$, $\left\langle G_{4,a}, c_{1} \right\rangle$, and $\left\langle G_{4,b}, c_{1} \right\rangle$ are quadratic twists.

Next, if $\rho_{E,2^{\infty}}(G_{\Q})$ is conjugate to one of $\left\langle G_{2,a}, c_{1}' \right\rangle$, $\left\langle G_{4,a}, c_{1}' \right\rangle$, or $\left\langle G_{4,b}, c_{1}' \right\rangle$, then using magma \cite{magma}, we see that $\rho_{E,2^{\infty}}(G_{\Q})$ is conjugate to a subgroup of $\left\{ \begin{bmatrix} \ast & \ast \\ 0 & \ast \end{bmatrix} \right\} \subseteq \operatorname{GL}(2, \ZZ / 4 \ZZ)$. Thus, $E$ has a cyclic, $\QQ$-rational subgroup of order $4$ that generates a $\QQ$-rational $4$-isogeny with cyclic kernel. Moreover, $\left\langle G_{2,a}, c_{1}' \right\rangle$ is the only group of those three which contains $\operatorname{-Id}$ and $\left\langle G_{2,a}, c_{1}' \right\rangle = \left\langle G_{4,a}, c_{1}', \operatorname{-Id} \right\rangle = \left\langle G_{4,b}, c_{1}', \operatorname{-Id} \right\rangle$. In other words, $\left\langle G_{2,a}, c_{1}' \right\rangle$, $\left\langle G_{4,a}, c_{1}' \right\rangle$, and $\left\langle G_{4,b}, c_{1}' \right\rangle$ are quadratic twists.

On the other hand, if $\rho_{E,2^{\infty}}(G_{\Q})$ is conjugate to any one of the remaining seven groups, $\mathcal{N}_{-1,0}(2^{\infty})$, $\left\langle G_{2,b}, c_{1} \right\rangle$, $\left\langle G_{2,b}, c_{1}' \right\rangle$, $\left\langle G_{4,c}, c_{1} \right\rangle$, $\left\langle G_{4,c}, c_{1}' \right\rangle$, $\left\langle G_{4,d}, c_{1} \right\rangle$, or $\left\langle G_{4,d}, c_{1}' \right\rangle$ then $E(\QQ)_{\texttt{tors}} \cong \ZZ / 2 \ZZ$ and $E$ does not have a cyclic, $\QQ$-rational subgroup of order $4$. Of those seven groups $\mathcal{N}_{-1,0}(2^{\infty})$, $\left\langle G_{2,b}, c_{1} \right\rangle$, and $\left\langle G_{2,b}, c_{1}' \right\rangle$ contain $\operatorname{-Id}$. Moreover, $\left\langle G_{2,b}, c_{1} \right\rangle = \left\langle G_{4,c}, c_{1}, \operatorname{-Id} \right\rangle = \left\langle G_{4,d}, c_{1}, \operatorname{-Id} \right\rangle$ and $\left\langle G_{2,b}, c_{1}' \right\rangle = \left\langle G_{4,c}, c_{1}', \operatorname{-Id} \right\rangle = \left\langle G_{4,d}, c_{1}', \operatorname{-Id} \right\rangle$. In other words, $\left\langle G_{2,b}, c_{1} \right\rangle$, $\left\langle G_{4,c}, c_{1} \right\rangle$, and $\left\langle G_{4,d}, c_{1} \right\rangle$ are quadratic twists and $\left\langle G_{2,b}, c_{1}' \right\rangle$, $\left\langle G_{4,c}, c_{1}' \right\rangle$, and $\left\langle G_{4,d}, c_{1}' \right\rangle$ are quadratic twists.

First, we will find examples of elliptic curves over $\QQ$ whose $2$-adic Galois image is conjugate to $\left\langle G_{2,a}, c_{1}' \right\rangle$, $\left\langle G_{4,a}, c_{1}' \right\rangle$, and $\left\langle G_{4,b}, c_{1}' \right\rangle$; the groups that can serve as the $2$-adic Galois image attached to an elliptic curve over $\QQ$ with \textit{j}-invariant equal to $1728$ with a cyclic, $\QQ$-rational subgroup of order $4$, and then classify the $2$-adic Galois image of the elliptic curves in their $\QQ$-isogeny classes. In this case, the isogeny graph associated to the $\QQ$-isogeny class is of $T_{4}$ type and $E$ is represented by the elliptic curve labeled $E_{2}$ (see below):
\begin{center}
\begin{tikzcd}
                & E_{2}                                                 &                 \\
                & E_{1} \arrow[u, no head, "2"] \arrow[ld, no head, "2"'] \arrow[rd, no head, "2"] &                 \\
E_{3} &                                                                 & E_{4}
\end{tikzcd}
\end{center}
\begin{itemize}
    \item $\left\langle G_{2,a}, c_{1}' \right\rangle$
    
    Example 9.8 in \cite{al-rCMGRs} says that for $E = E_{2} : y^{2} = x^{3}+9x$, $\textit{j}_{E} = 1728$, and $\rho_{E,2^{\infty}}(G_{\QQ})$ is conjugate to $\left\langle G_{2,a}, c_{1}' \right\rangle$. The $\QQ$-isogeny class of $E$ has LMFDB label \texttt{576.c}. The isogeny-torsion graph associated to \texttt{576.c} is of type $T_{4}$ with torsion configuration $([2,2],[2],[2],[2])$. All elliptic curves in the $\QQ$-isogeny class of $E$ have CM by an order of $K = \QQ(i)$ with discriminant $\Delta_{K} = -4$.
    
    \begin{enumerate}
    
    \item The elliptic curve $E_{1}/\QQ$ with LMFDB label \texttt{576.c3} is $2$-isogenous to $E = E_{2}$. Moreover, $\textit{j}_{E_{1}} = 1728$ and $E_{1}(\QQ)_{\text{tors}} \cong \ZZ / 2 \ZZ \times \ZZ / 2 \ZZ$. By Lemma \ref{contains -Id}, $\rho_{E_{1},2^{\infty}}(G_{\QQ})$ contains $\operatorname{-Id}$ and hence, $\rho_{E_{1},2^{\infty}}(G_{\QQ})$ is conjugate to $\left\langle G_{2,a}, c_{1} \right\rangle$.
    
    \item The elliptic curve $E_{3} / \QQ$ with LMFDB label \texttt{576.c1} is $4$-isogenous to $E = E_{2}$ and $\textit{j}_{E_{3}} = 287496$. Using code provided by Lozano-Robledo, we see that $E_{3}$ has CM by an order of $K = \QQ(i)$ with discriminant $\Delta_{K} = -4$ and conductor $f = 2$. Hence, $\delta = \frac{\Delta_{K} \cdot f^{2}}{4} = -4$ and $\phi = 0$. By Theorem \ref{Theorem 1.2}, $\rho_{E_{3},2^{\infty}}(G_{\QQ})$ is contained in $\mathcal{N}_{-4,0}(2^{\infty})$. Moreover, $\rho_{E_{3},2^{\infty}}(G_{\QQ})$ is a group of level $4$ and $\overline{\rho}_{E_{3},4}(G_{\QQ})$ is a group of order $16$ in $\operatorname{GL}(2, \ZZ / 4 \ZZ)$. The reduction of $\mathcal{N}_{-4,0}(2^{\infty})$ modulo $4$ is a group of order $16$. In other words, $\rho_{E_{3},2^{\infty}}(G_{\QQ})$ is conjugate to $\mathcal{N}_{-4,0}(2^{\infty})$.
    
    \item The elliptic curve $E_{4} / \QQ$ with LMFDB label \texttt{576.c2} is $4$-isogenous to $E = E_{2}$. Moreover, $\textit{j}_{E_{4}} = 287496$. Thus, it is a quadratic twist of $E_{3}$. By Corollary \ref{contains -Id}, $\rho_{E_{4},2^{\infty}}(G_{\QQ})$ contains $\operatorname{-Id}$. Thus, $\rho_{E_{4},2^{\infty}}(G_{\QQ})$ is also conjugate to $\mathcal{N}_{-4,0}(2^{\infty})$ as the only quadratic twist of $\mathcal{N}_{-4,0}(2^{\infty})$ that contains $\operatorname{-Id}$ is $\mathcal{N}_{-4,0}(2^{\infty})$ itself.
    \end{enumerate}
    
    \item $\left\langle G_{4,a}, c_{1}' \right\rangle$
    
    Example 9.8 in \cite{al-rCMGRs} says that for $E = E_{2} : y^{2} = x^{3} + x$, $\textit{j}_{E} = 1728$, and $\rho_{E,2^{\infty}}(G_{\QQ})$ is conjugate to $\left\langle G_{4,a}, c_{1}' \right\rangle$. The $\QQ$-isogeny class of $E$ has LMFDB label \texttt{64.a}. The isogeny-torsion graph associated to \texttt{64.a} is of type $T_{4}$ with torsion configuration $([2,2],[2],[4],[2])$ (note that in this case, one of the elliptic curves with $\textit{j}$-invariant = $287496$ has a point of order $4$ defined over $\QQ$). Note that $E(\QQ)_{\text{tors}} \cong \ZZ / 2 \ZZ$. All elliptic curves in the $\QQ$-isogeny class of $E$ have CM by an order of $K = \QQ(i)$ with discriminant $\Delta_{K} = -4$.
    
    \begin{enumerate}
        \item The elliptic curve $E_{1}/\QQ$ with LMFDB label \texttt{64.a3} is $2$-isogenous to $E = E_{2}$. Moreover, $E_{1}$ is isomorphic to the elliptic curve $y^{2} = x^{3} - 4x$, $\textit{j}_{E_{1}} = 1728$, and $E_{1}(\QQ)_{\text{tors}} \cong \ZZ / 2 \ZZ \times \ZZ / 2 \ZZ$. By Example 9.8 in \cite{al-rCMGRs}, $\rho_{E_{1},2^{\infty}}(G_{\QQ})$ is conjugate to $\left\langle G_{4,a}, c_{1} \right\rangle$.
        
        \item The elliptic curve $E_{3} / \QQ$ with LMFDB label \texttt{64.a2} is $4$-isogenous to $E = E_{2}$. Moreover, $E_{3}$ is isomorphic to the elliptic curve $y^{2}=x^{3}-44x+112$, $\textit{j}_{E_{3}} = 287496$, and $E_{3}(\QQ)_{\text{tors}} \cong \ZZ / 4 \ZZ$. The elliptic curve $E_{3}$ has CM by an order of $K = \QQ(i)$ with $\Delta_{K} = -4$ and conductor $f = 2$. Thus, $\delta = \frac{\Delta_{K} \cdot f^{2}}{4} = -4$. By Example 9.4 in \cite{al-rCMGRs}, $\rho_{E_{3},2^{\infty}}(G_{\QQ})$ is conjugate to $\left\langle \begin{bmatrix} 1 & 0 \\ 0 & -1 \end{bmatrix}, \begin{bmatrix} 5 & 0 \\ 0 & 5 \end{bmatrix}, \begin{bmatrix} 1 & 1 \\ -4 & 1 \end{bmatrix} \right\rangle \subseteq \mathcal{N}_{-4,0}(2^{\infty}) \subseteq \operatorname{GL}(2, \ZZ_{2})$.
        
        \item The elliptic curve $E_{4} / \QQ$ with LMFDB label \texttt{64.a1} is $4$-isogenous to $E = E_{2}$. Moreover, $E_{4}$ is isomorphic to the elliptic curve $y^{2}=x^{3}-44x-112$, $\textit{j}_{E_{4}} = 287496$, and $E_{4}(\QQ)_{\text{tors}} \cong \ZZ / 2 \ZZ$. The elliptic curve $E_{4}$ has CM by an order of $K = \QQ(i)$ with $\Delta_{K} = -4$ and conductor $f = 2$. Thus, $\delta = \frac{\Delta_{K} \cdot f^{2}}{4} = -4$. By Example 9.4 in \cite{al-rCMGRs}, $\rho_{E_{4},2^{\infty}}(G_{\QQ})$ is conjugate to $\left\langle \begin{bmatrix} -1 & 0 \\ 0 & 1 \end{bmatrix}, \begin{bmatrix} 5 & 0 \\ 0 & 5 \end{bmatrix}, \begin{bmatrix} 1 & 1 \\ -4 & 1 \end{bmatrix} \right\rangle \subseteq \mathcal{N}_{-4,0}(2^{\infty}) \subseteq \operatorname{GL}(2, \ZZ_{2})$.
    \end{enumerate}
    
    \item $\left\langle G_{4,b}, c_{1}' \right\rangle$
    
    Example 9.8 in \cite{al-rCMGRs} says that for $E = E_{2} : y^{2} = x^{3} + 4x$, $\textit{j}_{E} = 1728$, and $\rho_{E,2^{\infty}}(G_{\QQ})$ is conjugate to $\left\langle G_{4,b}, c_{1}' \right\rangle$. The $\QQ$-isogeny class of $E$ has LMFDB label \texttt{32.a}. The isogeny-torsion graph associated to \texttt{32.a} is of type $T_{4}$ with torsion configuration $([2,2],[4],[4],[2])$. Note that $E(\QQ)_{\text{tors}} \cong \ZZ / 4 \ZZ$. All elliptic curves in the $\QQ$-isogeny class of $E$ have CM by an order of $K = \QQ(i)$ with discriminant $\Delta_{K} = -4$.
    
    \begin{enumerate}
        \item The elliptic curve $E_{1}/\QQ$ with LMFDB label \texttt{32.a3} is $2$-isogenous to $E = E_{2}$. Moreover, $E_{1}$ is isomorphic to the elliptic curve $y^{2} = x^{3} - x$, $\textit{j}_{E_{1}} = 1728$, and $E_{1}(\QQ)_{\text{tors}} \cong \ZZ / 2 \ZZ \times \ZZ / 2 \ZZ$. By Example 9.8 in \cite{al-rCMGRs}, $\rho_{E_{1},2^{\infty}}(G_{\QQ})$ is conjugate to $\left\langle G_{4,b}, c_{1} \right\rangle$.
        
        \item The elliptic curve $E_{3} / \QQ$ with LMFDB label \texttt{32.a2} is $4$-isogenous to $E = E_{2}$. Moreover, $E_{3}$ is isomorphic to the elliptic curve $y^{2}=x^{3}-11x+14$, $\textit{j}_{E_{3}} = 287496$, and $E_{3}(\QQ)_{\text{tors}} \cong \ZZ / 4 \ZZ$. Moreover, $E_{3}$ has CM by an order of $K = \QQ(i)$ with $\Delta_{K} = -4$ and conductor $f = 2$. Thus, $\delta = \frac{\Delta_{K} \cdot f^{2}}{4} = -4$. By Example 9.4 in \cite{al-rCMGRs}, $\rho_{E_{3},2^{\infty}}(G_{\QQ})$ is conjugate to $\left\langle \begin{bmatrix} 1 & 0 \\ 0 & -1 \end{bmatrix}, \begin{bmatrix} 5 & 0 \\ 0 & 5 \end{bmatrix}, \begin{bmatrix} -1 & -1 \\ 4 & -1 \end{bmatrix} \right\rangle \subseteq \mathcal{N}_{-4,0}(2^{\infty}) \subseteq \operatorname{GL}(2, \ZZ_{2})$.
        
        \item The elliptic curve $E_{4} / \QQ$ with LMFDB label \texttt{32.a1} is $4$-isogenous to $E = E_{2}$. Moreover, $E_{4}$ is isomorphic to the elliptic curve $y^{2}=x^{3}-11x-14$, $\textit{j}_{E_{4}} = 287496$, and $E_{4}(\QQ)_{\text{tors}} \cong \ZZ / 2 \ZZ$. The elliptic curve $E_{4}$ has CM by an order of $K = \QQ(i)$ with $\Delta_{K} = -4$ and conductor $f = 2$. Thus, $\delta = \frac{\Delta_{K} \cdot f^{2}}{4} = -4$. By Example 9.4 in \cite{al-rCMGRs}, $\rho_{E_{4},2^{\infty}}(G_{\QQ})$ is conjugate to the group $\left\langle \begin{bmatrix} -1 & 0 \\ 0 & 1 \end{bmatrix}, \begin{bmatrix} 5 & 0 \\ 0 & 5 \end{bmatrix}, \begin{bmatrix} -1 & -1 \\ 4 & -1 \end{bmatrix} \right\rangle \subseteq \mathcal{N}_{-4,0}(2^{\infty}) \subseteq \operatorname{GL}(2, \ZZ_{2})$.
    \end{enumerate}
    
    Now we move on to classify the $2$-adic Galois image of isogeny-torsion graphs of $L_{2}(2)$ type with CM whose elliptic curves have \textit{j}-invariant equal to $1728$. Up to conjugation, there are seven possible $2$-adic Galois images, $\mathcal{N}_{-1,0}(2^{\infty})$, $\left\langle G_{2,b}, c_{1} \right\rangle$, $\left\langle G_{2,b}, c_{1}' \right\rangle$, $\left\langle G_{4,c}, c_{1} \right\rangle$, $\left\langle G_{4,c}, c_{1}' \right\rangle$, $\left\langle G_{4,d}, c_{1} \right\rangle$, and $\left\langle G_{4,d}, c_{1}' \right\rangle$. We will prove that there are four distinct arrangements. In this case, the $\QQ$-isogeny class of $E$ has two curves, both elliptic curves have \textit{j}-invariant equal to $1728$, and the isogeny graph associated to the $\QQ$-isogeny class of $E$ is of $L_{2}(2)$ type (see below):
    \begin{center}
    \begin{tikzcd}
E_{1} \arrow[r, "2", no head] & E_{2}
\end{tikzcd}
\end{center}
    \begin{enumerate}
        \item $\left\langle G_{2,b}, c_{-1} \right\rangle$ and $\left\langle G_{2,b}, c_{-1}' \right\rangle$
        
        Let $E_{1} : y^{2} = x^{3} + 18x$, let $E_{2} : y^{2} = x^{3} - 72x$, let $E_{1}' : y^{2} = x^{3} - 18x$, and let $E_{2}' : y^{2} = x^{3}+72x$. Then $E_{1}$ is $2$-isogenous to $E_{2}$ and $E_{1}'$ is $2$-isogenous to $E_{2}'$. By Example 9.8 in \cite{al-rCMGRs}, $\rho_{E_{1},2^{\infty}}(G_{\QQ})$ is conjugate to $\left\langle G_{2,b}, c_{-1}' \right\rangle$ and $\rho_{E_{1}',2^{\infty}}(G_{\QQ})$ is conjugate to $\left\langle G_{2,b}, c_{-1} \right\rangle$.
        
        We claim that $\rho_{E_{2},2^{\infty}}(G_{\QQ})$ is conjugate to $\left\langle G_{2,b}, c_{-1} \right\rangle$ and $\rho_{E_{2}',2^{\infty}}(G_{\QQ})$ is conjugate to $\left\langle G_{2,b}, c_{-1}' \right\rangle$. Note that $E_{1}$ is a quadratic twist of $E_{2}'$ (by $2$) and $E_{2}$ is a quadratic twist of $E_{1}'$ (by $2$). By Corollary \ref{contains -Id}, $\rho_{E_{2},2^{\infty}}(G_{\QQ})$ and $\rho_{E_{2}',2^{\infty}}(G_{\QQ})$ contain $\operatorname{-Id}$. The only quadratic twist of $\left\langle G_{2,b}, c_{-1} \right\rangle$ that contains $\operatorname{-Id}$ is $\left\langle G_{2,b}, c_{-1} \right\rangle$ itself and the only quadratic twist of $\left\langle G_{2,b}, c_{-1}' \right\rangle$ that contains $\operatorname{-Id}$ is $\left\langle G_{2,b}, c_{-1}' \right\rangle$ itself.
        
        \item $\left\langle G_{4,c}, c_{1} \right\rangle$ and $\left\langle G_{4,c}, c_{1}' \right\rangle$
        
        Let $E_{1} : y^{2} = x^{3} + 2x$ and let $E_{2} : y^{2} = x^{3} - 8x$. Then $E_{1}$ is $2$-isogenous to $E_{2}$. By Example 9.8 in \cite{al-rCMGRs}, $\rho_{E_{1},2^{\infty}}(G_{\QQ})$ is conjugate to $\left\langle G_{4,c}, c_{1}' \right\rangle$ and $\rho_{E_{2},2^{\infty}}(G_{\QQ})$ is conjugate to $\left\langle G_{4,c}, c_{1} \right\rangle$.
        
        \item $\left\langle G_{4,d}, c_{1} \right\rangle$ and $\left\langle G_{4,d}, c_{1}' \right\rangle$
        
        Let $E_{1} : y^{2} = x^{3} - 2x$ and let $E_{2} : y^{2} = x^{3} + 8x$. Then $E_{1}$ is $2$-isogenous to $E_{2}$. By Example 9.8 in \cite{al-rCMGRs}, $\rho_{E_{1},2^{\infty}}(G_{\QQ})$ is conjugate to $\left\langle G_{4,d}, c_{1} \right\rangle$ and $\rho_{E_{2},2^{\infty}}(G_{\QQ})$ is conjugate to $\left\langle G_{4,d}, c_{1}' \right\rangle$.
        
        \item $\mathcal{N}_{-1,0}(2^{\infty})$
        
        Let $E_{1}/\QQ$ be an elliptic curve with $\textit{j}_{E_{1}} = 1728$ such that $\rho_{E_{1},2^{\infty}}(G_{\QQ})$ is conjugate to $\mathcal{N}_{-1,0}(2^{\infty})$. Then the isogeny graph associated to the $\QQ$-class of $E_{1}$ is of $L_{2}(2)$ type and $E_{1}$ is $2$-isogenous to an elliptic curve $E_{2}/\QQ$ with $\textit{j}_{E_{2}} = 1728$. A priori, $\rho_{E_{2},2^{\infty}}(G_{\QQ})$ is conjugate to one of the seven groups, $\mathcal{N}_{-1,0}(2^{\infty})$, $\left\langle G_{2,b}, c_{1} \right\rangle$, $\left\langle G_{2,b}, c_{1}' \right\rangle$, $\left\langle G_{4,c}, c_{1} \right\rangle$, $\left\langle G_{4,c}, c_{1}' \right\rangle$, $\left\langle G_{4,d}, c_{1} \right\rangle$, or $\left\langle G_{4,d}, c_{1}' \right\rangle$. We have eliminated six out of the seven possibilities. Hence, $\rho_{E_{2},2^{\infty}}(G_{\QQ})$ is conjugate to $\mathcal{N}_{-1,0}(2^{\infty})$. Let $E$ be the elliptic curve $y^{2} = x^{3} + 3x$. Then by Example 9.8 in \cite{al-rCMGRs}, $\rho_{E,2^{\infty}}(G_{\QQ})$ is conjugate to $\mathcal{N}_{-1,0}(2^{\infty})$.
        \end{enumerate}
\end{itemize}
\end{proof}

\bibliography{bibliography}
\bibliographystyle{plain}
\end{document}